\newtheorem{theorem}{Theorem}[section]
\newtheorem{lemma}[theorem]{Lemma}
\newtheorem{proposition}[theorem]{Proposition}
\newtheorem{corollary}[theorem]{Corollary}
\newtheorem{definition}[theorem]{Definition}
\newtheorem{remark}[theorem]{Remark}
\theoremstyle{definition}
\newtheorem{example}[theorem]{Example}
\DeclareMathOperator{\NS}{N^1}
\DeclareMathOperator{\Vol}{\mathrm{Vol}}
\DeclareMathOperator{\vol}{\mathrm{vol}}
\DeclareMathOperator{\diff}{\mathrm{Diff}}
\DeclareMathOperator{\lct}{\mathrm{lct}}
\newcommand{\qq}{\mathbb{Q}}
\newcommand{\pp}{\mathbb{P}}
\newcommand{\rr}{\mathbb{R}}
\newcommand{\D}{\mathfrak{D}}
\newcommand{\oo}{\mathcal{O}}
\newcommand{\supp}{\mathrm{Supp}}
\newcommand{\sje}{s_{n-j, \epsilon}}
\newcommand{\mje}{m_{n-j, \epsilon}}
\newcommand{\mfe}{m_{\dim F, \epsilon}}
\newcommand{\sfe}{s_{\dim F, \epsilon}}
\DeclareRobustCommand{\SkipTocEntry}[5]{}
\begin{document}


\author{Gabriele Di Cerbo}
\email{dicerbo@math.princeton.edu}
\address{Department of Mathematics, Princeton University, Princeton, NJ 08540, USA}

\author{Roberto Svaldi}
\email{roberto.svaldi@epfl.ch}
\address{EPFL, SB MATH-GE, MA B1 497 (B\^{a}timent MA), Station 8, CH-1015 Lausanne, Switzerland.}


\title[Birational boundedness of elliptic Calabi--Yau varieties]{Birational boundedness of low dimensional elliptic Calabi--Yau varieties with a section}


\subjclass[2020]{Primary: 14J32. Secondary: 14E30 14J10 14J81}


\keywords{Calabi--Yau varieties, log Calabi--Yau pairs, boundedness of algebraic varieties, elliptic fibrations.}


\thanks{
GDC is partially supported by the Simons Foundation and the NSF Grant DMS-1702358.
Most of this work was completed during several visits of RS to Columbia University. 
He would like to thank Columbia University for the hospitality and the nice working environment. 
He would also like to thank MIT where he was a graduate student 
and UCSD where he was a visitor when part of this work was completed. 
He kindly acknowledges financial support from NSF research grant no: 1200656, no: 1265263 and Churchill College, Cambridge. 
During the final revision of this work he was supported by 
funding from the European Research Council under the European Union's Seventh Framework Programme (FP7/2007-2013)/ERC Grant agreement no. 307119.
}


\begin{abstract}
We prove that there are finitely many families, up to isomorphism in codimension one, 
of elliptic Calabi-Yau manifolds $Y\rightarrow X$ with a rational section, provided that $\dim(Y)\leq 5$ and $Y$ is not of product-type. 
As a consequence, we obtain that there are finitely many possibilities for the Hodge diamond of such manifolds. 
The result follows from log birational boundedness of klt pairs $(X, \Delta)$ with $K_X+\Delta$ numerically trivial
and not of product-type, in dimension at most $4$.
\end{abstract}


\maketitle


\section{Introduction}
Smooth varieties with trivial canonical bundle constitute one of the fundamental building blocks in the birational classification of algebraic varieties.
Hence, it is an important problem to understand their possible algebraic and topological structures. 
By a well-known result of Beauville and Bogomolov \cite{MR730926}, 
every smooth variety with trivial canonical bundle can be decomposed -- after a finite \'etale cover -- as a product of abelian, hyperk\"ahler or Calabi--Yau varieties. 
A smooth projective variety $Y$ with trivial canonical bundle is Calabi--Yau if it is simply connected and $H^i(Y, \mathcal{O}_Y)=0$ for $0<i<\dim Y$.

While we have a clear understanding of the algebraic and topological structures of abelian varieties, the situation is far from being settled in the other two cases.
In dimension two, Calabi--Yau surfaces are K3 surfaces and they admit a unique topological model, though in order to show this one has to take into account non-projective models. 
In higher dimension the problem of determining the number of
deformation types of Calabi--Yau manifolds is wide open;
nonetheless, every Calabi-Yau manifold of dimension at least three is automatically projective.

The problem of understanding whether there are finitely many algebraic families of Calabi--Yau varieties for each fixed dimension $n\geq 3$ is the main motivation of this paper.
A class of varieties that can be described by finitely many families is said to be {\it bounded}, see \S~\ref{bound.cyp.sect}.
In particular, we study the boundedness of those Calabi--Yau varieties admitting an elliptic fibration, i.e. a projective morphism with connected fibers, whose general fiber is one-dimensional of genus $1$.
This class of varieties has already been understood in dimension $\leq 3$ and it is of particular interest for its applications in theoretical physics, as we explain below.

In dimension 2, the question of boundedness has a negative answer, since there are infinitely many algebraic families of projective elliptic $K3$ surfaces. 
On the other hand, if we consider elliptic $K3$ surfaces admitting a section, it is not hard to show that boundedness holds for this class of varieties.

Surprisingly, the lack of boundedness for general Calabi--Yau elliptic fibrations appears to be a behaviour peculiar to surfaces only: M. Gross showed that there are indeed finitely many families, up to birational equivalence, of Calabi--Yau threefolds possessing a non-isotrivial elliptic fibration over a rational base, see \cite{Gross}.
In the three-dimensional case, when the base of the fibration is not rational, then it is an Enriques surface and the total space behaves like a product, the fibration being isotrivial. 
In his work, Gross showed that the problem of boundedness can be split into two steps: studying the boundedness of those surfaces that appear as bases of elliptic fibrations together with the study of elliptic curves defined over the function field of the possible bases. 
A similar behaviour is expected to hold in higher dimension.

The main result of this paper is a generalization of Gross' theorem to elliptic varieties of dimension of most $5$ with terminal singularities and numerically trivial canonical class, provided that the fibration admits a rational section.

\begin{theorem}
\label{elliptic}
Let $2\leq n\leq 5$ be an integer. Then there are finitely many families, up to isomorphism in codimension one, of elliptic fibrations $Y\rightarrow X$ such that
\begin{enumerate}
\item  $Y$ is terminal projective of dimension $n$ and $K_Y \equiv 0$,
\item $Y$ is not of product-type, and
\item there exists a rational section $X\dashrightarrow Y$.
\end{enumerate}
\end{theorem}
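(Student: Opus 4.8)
The plan is to use the canonical bundle formula to transfer the problem from the total space $Y$ to a klt Calabi--Yau pair on the base $X$, to apply the log birational boundedness of such pairs in dimension at most $4$, and finally to reconstruct $Y$ from bounded data through its Weierstrass model.

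First I would apply the Kodaira-type canonical bundle formula to the elliptic fibration $f\colon Y\rightarrow X$. Since $Y$ is Calabi--Yau we have $K_Y\sim_{\qq}0$, and the formula produces a discriminant divisor $B_X$ and a moduli divisor $M_X$ with
$$0\sim_{\qq}K_Y\sim_{\qq}f^*(K_X+B_X+M_X),\qquad\text{hence}\qquad K_X+B_X+M_X\sim_{\qq}0.$$
For an elliptic fibration the moduli part $M_X$ is the pullback of an ample class under the $j$-map to the compactified moduli of elliptic curves; hence $M_X$ is semiample, a fixed bounded multiple of it is linearly equivalent to the fundamental line bundle $\mathcal L=(R^1f_*\oo_Y)^{-1}$ governing the Weierstrass model, and the coefficients of $B_X$ lie in the fixed finite set provided by Kodaira's classification of singular fibres. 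Choosing a general effective $M_X'\sim_{\qq}M_X$ and setting $\Delta=B_X+M_X'$, I obtain a klt pair $(X,\Delta)$ with $K_X+\Delta\equiv 0$, and hypothesis (2) translates into $(X,\Delta)$ being not of product-type.

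Next I would invoke the log birational boundedness of such pairs, available here because $\dim X\le 4$ when $\dim Y\le 5$. This furnishes a bounded family into which each base pair $(X,\supp\Delta)$ maps birationally, and in particular it bounds the volume of $-(K_X+B_X)\sim_{\qq}M_X$, hence the fundamental line bundle $\mathcal L$. With $\mathcal L$ bounded, the Weierstrass model $Y^{W}\rightarrow X$, namely the relative anticanonical model of $f$, of which $Y$ is a crepant birational modification, embeds in the $\pp^2$-bundle $\pp_X(\oo_X\oplus\mathcal L^{2}\oplus\mathcal L^{3})$ as the hypersurface cut out by $y^2z=x^3+a_4xz^2+a_6z^3$ with $a_4\in H^0(X,\mathcal L^{4})$ and $a_6\in H^0(X,\mathcal L^{6})$. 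Since $X$ and $\mathcal L$ vary in a bounded family, so do this projective bundle and the linear systems $|\mathcal L^{4}|,|\mathcal L^{6}|$; therefore the Weierstrass models form finitely many families. Finally, because the discriminant $4a_4^3+27a_6^2\in H^0(X,\mathcal L^{12})$ is bounded, the singular fibres have bounded Kodaira type, so only finitely many crepant modifications of $Y^{W}$ occur; this yields finitely many families for $Y$ up to isomorphism in codimension one.

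The main obstacle is the passage from log birational boundedness of the base pair to genuine boundedness of the fundamental line bundle $\mathcal L$ and of the Weierstrass data. Log birational boundedness controls $X$ only up to birational modification and pins down the support of $\Delta$ rather than the precise divisor $M_X$; to recover $\mathcal L$ one must bound the moduli part itself, exploiting its semiampleness and the boundedness of its denominator, and then check that these bounds survive the birational modification relating $X$ to its bounded model, so that the reconstructed Weierstrass model over the original base still lies in a bounded family.
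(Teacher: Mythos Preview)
Your reduction to the base is essentially the same as the paper's (canonical bundle formula, DCC coefficients, apply Theorem~\ref{main} to bound $(X,\Delta)$ up to small birational modification). The divergence is in the second half: you propose to go through the Weierstrass model and bound the fundamental line bundle $\mathcal L$ together with the coefficients $a_4,a_6$, whereas the paper never touches the Weierstrass equation. Instead it uses the section $S$ directly as a divisor on $Y$: after an MMP making $S$ relatively ample, inversion of adjunction shows $(Y,S)$ is plt and $S|_S\sim K_S\sim K_X$; then for a general $G\in|f^*(2n{+}2)H|$ with $H$ the bounded very ample class on the base, the Cone Theorem forces $K_Y+S+G$ to be ample, and a restricted-volume computation bounds $\vol(K_Y+S+G)$ in terms of $\vol(H)$. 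This feeds straight into Theorems~\ref{main.hmx} and~\ref{hmx_1.6} on $Y$, with the codimension-one ambiguity handled at the end by extracting exceptional divisors in family as in the proof of Theorem~\ref{main}.

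The gap you flag in your own approach is real and is precisely what the paper's route avoids. Log birational boundedness (even the stronger ``isomorphic in codimension one to a bounded pair'' conclusion of Theorem~\ref{main}) controls Weil divisors on $X$, not Cartier classes, so recovering $\mathcal L$ as an honest line bundle on the bounded model requires extra work; and your final claim that boundedness of the discriminant in $|\mathcal L^{12}|$ forces only finitely many crepant modifications of $Y^W$ is not automatic---one needs an argument controlling the terminalizations, which in the threefold case Gross handled by explicit Kodaira-fibre analysis but which does not obviously generalize. The paper's strategy sidesteps both issues by producing a big log pair of bounded volume on $Y$ itself, so that the HMX boundedness machinery applies directly to the total space rather than to a reconstructed model.
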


The condition that $Y$ be not of product-type should be thought as an analogue of the non-isotriviality required in Gross' theorem. 
For a precise definition we refer the reader to Definition~\ref{defproduct}. 
If $Y$ is of product type, then $Y$ is birationally isomorphic to the quotient of a product, cf. Theorem~\ref{ambro}.

Once we require the existence of a section, our proof shows that even elliptic K3 surfaces are bounded.
The proof of~\ref{elliptic} heavily relies on the existence of a section to produce a suitably positive divisor, more precisely a big and nef divisor, on the total space with bounded volume, see \S~\ref{boundedness.sect}-\ref{sec.proofs}. 
The need of a section is the main reason why we are not able to recover the full statement proved by Gross in dimension $3$, although we expect Gross' result to generalize to higher dimension.
After the completion of this work, Birkar and the authors obtained an extension of Theorem~\ref{elliptic} to any dimension, see~\cite{bds}, while Filipazzi and the second named author \cite{FS20} have extended the ideas contained in this paper to the case of $n$-dimensional varieties of Kodaira dimension $n-1$, cf. also \cite{Fil20}.
 
Theorem \ref{elliptic}, combined with results from motivic integration, implies uniform boundedness of Hodge numbers for elliptic Calabi--Yau manifolds. 

\begin{corollary}
\label{hodge}
Let $2 \leq n\leq 5$ be an integer. 
Then there exists a positive integer $M_n$ such that $h^{p,q}(Y)\leq M_n$ for any $p,q$, where $Y$ is a  smooth manifold satisfying the conditions of Theorem~\ref{elliptic}.
\end{corollary}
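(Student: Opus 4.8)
The plan is to combine the birational boundedness supplied by Theorem \ref{elliptic} with the fact that, for Calabi--Yau manifolds, the entire Hodge diamond is invariant under isomorphism in codimension one. The key structural input is the motivic integration machinery of Kontsevich and Denef--Loeser (refining Batyrev's theorem on equality of Betti numbers of birational Calabi--Yau varieties), which asserts that $K$-equivalent smooth projective varieties have the same Hodge--Deligne $E$-polynomial. Recall that for a smooth projective $X$ one has $E(X; u,v) = \sum_{p,q} (-1)^{p+q} h^{p,q}(X)\, u^p v^q$, so equality of $E$-polynomials forces equality of all individual Hodge numbers.

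First I would reduce the problem to a $K$-equivalence statement. If $Y \dashrightarrow Y'$ is an isomorphism outside closed subsets of codimension at least two and both canonical classes are numerically trivial, then choosing a common resolution $p\colon Z \to Y$ and $q\colon Z \to Y'$ one has $p^*K_Y = q^*K_{Y'}$, since the two pullbacks agree away from a codimension-two locus and both target classes are trivial; hence $Y$ and $Y'$ are $K$-equivalent. Applying the results of Kontsevich and Denef--Loeser to smooth projective representatives yields $h^{p,q}(Y) = h^{p,q}(Y')$ for all $p,q$. Thus the Hodge diamond of a Calabi--Yau manifold $Y$ as in Theorem \ref{elliptic} depends only on its isomorphism-in-codimension-one class.

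Next I would bound the Hodge numbers across the finitely many families. By Theorem \ref{elliptic} there are finitely many families $f_i\colon \mathcal Y_i \to S_i$, $i = 1,\dots,N$, with each $S_i$ of finite type over $\mathbb{C}$, such that every $Y$ in the statement is isomorphic in codimension one to a fiber of some $f_i$. After stratifying each base $S_i$ and taking simultaneous resolutions, I may assume the relevant fibers are smooth projective Calabi--Yau manifolds; the Hodge numbers $h^{p,q}$ are then constructible functions on a base of finite type, so they attain only finitely many values over all the $S_i$. Letting $M_n$ denote the maximum of these finitely many values, the two steps combine: given any $Y$ as in Theorem \ref{elliptic}, choose a fiber $Y_s$ of some $f_i$ that is isomorphic to $Y$ in codimension one, and conclude $h^{p,q}(Y) = h^{p,q}(Y_s) \le M_n$ for all $p,q$.

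The main obstacle is the careful application of the motivic integration invariance: one must ensure that on both sides of the birational correspondence the comparison models can be taken smooth and projective, so that honest Hodge numbers (rather than stringy invariants of singular models) are being equated, and that the isomorphism in codimension one genuinely yields the $K$-equivalence hypothesis required by the theorem of Kontsevich and Denef--Loeser. Everything else — constructibility of Hodge numbers on a finite-type base and the finiteness of the number of families — is routine once Theorem \ref{elliptic} is in hand.
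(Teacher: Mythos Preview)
Your overall strategy---deriving $K$-equivalence from isomorphism in codimension one and then invoking motivic integration---is the same as the paper's. However, there is a genuine gap in your second step. The fibers of the bounding family produced by Theorem \ref{elliptic} are \emph{terminal} Calabi--Yau varieties, not in general smooth ones. Your claim that after stratifying and taking simultaneous resolutions you ``may assume the relevant fibers are smooth projective Calabi--Yau manifolds'' fails: a resolution of a terminal variety introduces exceptional divisors with strictly positive discrepancy, so the resolved fiber $\widetilde{\mathcal{X}}_s$ no longer has trivial canonical class and, more to the point, is no longer $K$-equivalent to the original smooth Calabi--Yau $Y$. Hence there is no reason for $h^{p,q}(Y)$ to equal $h^{p,q}(\widetilde{\mathcal{X}}_s)$, and your constructibility argument bounds the wrong numbers. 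You flag exactly this issue in your final paragraph, but you do not actually overcome it.

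The paper closes this gap by working with Batyrev's \emph{stringy} $E$-function rather than the ordinary Hodge--Deligne polynomial. The stringy $E$-function is defined for the possibly singular fiber $\mathcal{X}_s$, is invariant under $K$-equivalence (this is the result of Veys cited in the paper, ultimately the change-of-variables formula in motivic integration), and agrees with the ordinary $E$-polynomial when the variety is smooth---so it equals $E(Y)$. Crucially, the stringy $E$-function of $\mathcal{X}_s$ is computed from a log resolution via Batyrev's formula in terms of the discrepancies, and a simultaneous log resolution of the family shows that both the $E$-polynomials of the strata and the discrepancies are locally constant; hence the stringy $E$-function is constant on strata and noetherian induction gives finitely many values. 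The passage through the stringy invariant is precisely what bridges the smooth $Y$ and the singular bounding fiber, and is the missing ingredient in your proposal.
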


Elliptic Calabi--Yau fourfolds with a section seem to be the most relevant in F-theory and bounding their Hodge numbers is a central problem in string theory, see, for example,~\cite{TW}. 
 
The key ingredient in the proof of Theorem~\ref{elliptic} is a boundedness statement for the base of an elliptic fibration. 
If $Y$ has an elliptic fibration $Y \rightarrow X$ then by a general version of Kodaira's canonical bundle formula, $X$ carries the structure of a Calabi--Yau pair, which means that there exists an effective divisor $\Delta$ such that $(X,\Delta)$ is klt and  $K_X +\Delta$ is numerically trivial, see \S~\ref{lc.trivial.fibrations}.
In order to understand boundedness of elliptic Calabi--Yau varieties, we first need to understand if those Calabi--Yau pairs appearing as the base of such fibrations indeed belong to a bounded family. 
One crucial observation is that we have some control on the coefficients of $\Delta$, as it was already clear to Kodaira: indeed the coefficients vary in a finite set, which is the first fundamental hint and step towards boundedness, cf. Section \ref{bound.cyp.sect}.

Let us notice that the set of threefolds of the form $S\times \pp^1$, where $S$ is a K3 surface, forms an unbounded family of Calabi--Yau pairs. 
On the other hand, a result of Koll\'ar and Larsen in~\cite{KL} implies that the base of an elliptic Calabi--Yau manifold $Y$ is rationally connected when $Y$ is not a product.
A conjecture of M\textsuperscript{c}Kernan and Prokhorov,~\cite[Conjecture~3.9]{MP}, predicts that the set of rationally connected varieties $X$ admitting a Calabi--Yau pair structure $(X, \Delta)$ is bounded in any fixed dimension if we restrict the type of singularities of the pair, see also~\cite[Conjecture~1.3]{rccy3}.
Our next result offers a first important piece of evidence for this conjecture in dimension up to $4$.

\begin{theorem}
\label{main}
Fix a positive integer $n\leq 4$ and a finite set $I \subset [0,1]$. Then the set $\mathfrak{D}$ of pairs $(X,\Delta)$ such that 
	\begin{enumerate}
		\item $X$ is a projective variety of dimension $n$,
		\item $(X,\Delta)$ is klt with coefficients of $\Delta$ in $I$ and $\Delta \neq 0$,
		\item $K_X+\Delta$ is numerically trivial, and
		\item $(X,\Delta)$ is not of product-type 
	\end{enumerate}
	forms a log birationally bounded family.
\\
More precisely, there exists a bounded family of Calabi--Yau pairs $\mathfrak{D}'$ such that each pair in $\mathfrak{D}$ is isomorphic in codimension $1$ to a pair in $\mathfrak{D}'$.
\end{theorem}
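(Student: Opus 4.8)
The plan is to induct on $n=\dim X$, the cases $n\le 1$ being immediate since there $X\cong\pp^1$ with $\deg\Delta=2$, and to split the inductive step according to whether the numerical class $-K_X\equiv\Delta$ is big. Two reductions come first. Because boundedness is only asked up to isomorphism in codimension $1$, I am free to replace $X$ by any small $\qq$-factorial modification, so I may assume $X$ is $\qq$-factorial and $K_X+\Delta$ is nef (indeed numerically trivial). Moreover, applying the ACC for log canonical thresholds and the global ACC of Hacon--McKernan--Xu to the finite coefficient set $I$ together with the numerical triviality of $K_X+\Delta$, there is a single $\epsilon=\epsilon(n,I)>0$ such that every pair in $\D$ is $\epsilon$-lc. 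This uniform gap is exactly what later licenses the use of boundedness of Fano varieties.

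\emph{The big case.} Suppose $\Delta\equiv -K_X$ is big. Since $(X,\Delta)$ is klt with $K_X+\Delta\equiv 0$ and $\Delta$ big, a standard perturbation of the boundary shows that $X$ is of Fano type. As $\dim X\le 4$ and $X$ is $\epsilon$-lc, Birkar's theorem on the Borisov--Alexeev--Borisov conjecture yields that the varieties $X$ form a bounded family. Fixing a very ample divisor $A$ bounding this family, the divisor $\Delta\equiv -K_X$ then has bounded $A$-degree and coefficients in the finite set $I$, so the pairs $(X,\Delta)$ are in fact log bounded, with no birational modification needed.

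\emph{The fibration case.} If $\Delta\equiv -K_X$ is pseudoeffective but not big, I would extract a fibration by running a $(K_X+(1-\delta)\Delta)$-MMP for small $\delta>0$. Since $\delta\Delta\neq 0$ is effective, $K_X+(1-\delta)\Delta$ is not pseudoeffective and the program terminates with a Mori fibre space $f\colon X'\to Z$ with $0<\dim Z<n$. Every contraction and flip in this program has $(K_X+(1-\delta)\Delta)\cdot C=-\delta\,\Delta\cdot C$, and because $K_X+\Delta\equiv 0$ globally every contracted divisor is crepant for $(X,\Delta)$; thus $X$ and $X'$ are crepant birational and differ only by flips, flops and divisorial contractions of finitely many crepant divisors. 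The general fibre $F$ satisfies $K_F+\Delta'|_F\equiv 0$ with $\delta\,\Delta'|_F$ ample, hence is an $\epsilon$-lc Fano type variety of dimension $<n$ and is bounded by Birkar's theorem. Applying the canonical bundle formula of Kollár--Ambro--Fujino gives $K_{X'}+\Delta'\equiv f^{*}(K_Z+\Delta_Z+M_Z)$, where $(Z,\Delta_Z+M_Z)$ is a generalized klt Calabi--Yau pair whose discriminant $\Delta_Z$ has coefficients in an explicit finite set and whose nef moduli part $M_Z$ is controlled in this low-dimensional range. By Kollár--Larsen \cite{KL} the non-product-type hypothesis forces $X$, and hence $Z$, to be rationally connected, removing any irregular part; and one checks, using Definition \ref{defproduct}, that $(Z,\Delta_Z)$ is again not of product-type, so the inductive hypothesis applies and the base $Z$ is log birationally bounded.

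\emph{Assembling the total space and the obstacles.} Finally I would reconstruct boundedness of $(X',\Delta')$ from the bounded base $Z$, the bounded general fibre, the finitely many possibilities for $\Delta_Z$, and the bounded moduli part $M_Z$: a relatively ample bounded divisor on $f$ embeds $X'$ into a projective bundle of bounded rank over $Z$, exhibiting the family of fibrations, and with it the family of pairs, as bounded. To return from $X'$ to a model isomorphic in codimension $1$ with the original $X$, I would extract back the finitely many crepant divisors contracted along the MMP; since these have bounded discrepancy data over a bounded $X'$, the extraction stays within a bounded family and lands precisely in the isomorphism-in-codimension-one class of $X$. I expect the two genuine difficulties to be, first, this control of the birational geometry, namely guaranteeing that after flips, flops and crepant contractions-and-extractions the output is an honest isomorphism in codimension $1$ (here $\qq$-factoriality and the Calabi--Yau condition $K_X+\Delta\equiv 0$ are what confine the modifications to crepant ones), and second, the gluing step, namely proving that boundedness of base and general fibre together with boundedness of the moduli divisor $M_Z$ propagates to boundedness of the whole fibred pair. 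The non-product-type assumption is precisely what rules out the unbounded examples such as $S\times\pp^{1}$ with $S$ a K3 surface, cf. Section \ref{bound.cyp.sect}, and keeps the inductive base under control.
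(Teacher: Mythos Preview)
Your architecture---run an MMP to a Mori fibre space $X'\to Z$, use the canonical bundle formula, bound the base by induction, then extract the contracted divisors back via \cite[Prop.~2.5]{HX14}---is the paper's. But two of the steps you flag as ``difficulties'' are in fact the entire content of the paper, and your sketches of them do not work as written.

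First, the moduli part. You assert that the inductive hypothesis applies to $(Z,\Delta_Z)$, but the inductive hypothesis is about klt Calabi--Yau \emph{pairs} with coefficients in a fixed finite (or DCC) set, not generalized pairs; a priori $M_Z$ is only a nef $b$-divisor and there is no reason its coefficients are controlled. Converting $(Z,\Delta_Z+M_Z)$ into a klt Calabi--Yau pair with DCC boundary requires effective semiampleness of the moduli part with bounded Cartier index, which is exactly the Effective Base Point Freeness Conjecture. This is precisely why the theorem is restricted to $n\le 4$: if the fibre of $X'\to Z$ is a curve, Prokhorov--Shokurov (Theorem~\ref{semiample}) supplies this; if the fibre has dimension $\ge 2$ then $\dim Z\le 2$, and one bounds $Z$ directly---without DCC coefficients---using Birkar's Theorem~\ref{birk.thm} to get a uniform $\delta$-lc bound on the base, together with Alexeev's Theorem~\ref{alexeev.thm} (or BAB). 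The paper organizes this via the full tower of Theorem~\ref{notproduct}, climbing from the Fano bottom back up, rather than inducting on the statement of Theorem~\ref{main} itself.

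Second, the gluing step. Your proposed mechanism---embed $X'$ in a projective bundle of bounded rank over $Z$ using a relatively ample divisor---does not work, because there is no a priori bound on the fibrewise degree of such a divisor. The paper's Theorem~\ref{bounded.bases} (equivalently Theorem~\ref{bound.MFS.thm}) is the substantive technical result here: one pulls back a very ample $H$ from the bounded base, notes that $\delta\Delta'+f^*H$ is ample since $\rho(X'/Z)=1$, and bounds its volume via an inductive cutting-by-hyperplanes argument (Proposition~\ref{hyperplane} and Lemma~\ref{jiang}), using crucially the uniform $\epsilon$-lc bound from Corollary~\ref{elc} and Lemma~\ref{kltlemma} to force non-pseudoeffectivity on the base if the volume were unbounded. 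One then invokes \cite[Thm.~1.6]{HMX14} to pass from log birational boundedness to boundedness.

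One minor correction: the divisors contracted by the $K_X$-MMP are not crepant for $(X,\Delta)$; they have discrepancy $\le 0$ (log discrepancy $\le 1$), not equal to $0$. This is still what is needed for the extraction step, but the terminology should be adjusted.
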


The condition that $(X,\Delta)$ is not of product-type is analogous to that of Theorem~\ref{elliptic} and it is necessary in order to avoid the examples of unbounded Calabi--Yau pairs from the previous paragraph.
We can show that this condition can be characterized by studying possible outcomes of runs of the Minimal Model Program (MMP), as shown in Theorem~\ref{notproduct}. 
Roughly speaking, $(X,\Delta)$ is of product-type if a run of the MMP ends with a special type of generically isotrivial fibration.
Let us also point out that the condition that $(X,\Delta)$ is klt cannot be relaxed as shown in Example \ref{hirzebruch}.   

An immediate corollary of what we have discussed so far is the birational boundedness of bases of elliptic Calabi--Yau manifolds in low dimension. 
We already saw that is a fundamental step in the proof of Theorem~\ref{elliptic}.

\begin{corollary}
\label{cy.fibration}
Let $n\leq 5$ be an integer. 
Then there are finitely many families, up to birational equivalence, of bases $X$ of elliptic fibrations $Y \to X$ such that
\begin{enumerate}
\item 
$Y$ is terminal projective of dimension $n$ and $K_Y \equiv 0$, and
\item 
$Y$ is not of product-type.
\end{enumerate}
\end{corollary}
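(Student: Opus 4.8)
The plan is to deduce the statement directly from Theorem \ref{main}, applied to the base pair produced by the canonical bundle formula. Given an elliptic fibration $f\colon Y\to X$ with $Y$ a Calabi-Yau variety of dimension $n\leq 5$, the first step is to invoke the generalized Kodaira canonical bundle formula discussed in the introduction: since $K_Y\equiv 0$, descending along $f$ endows $X$ with the structure of a Calabi-Yau pair, i.e. there is an effective divisor $\Delta$ with $(X,\Delta)$ klt and $K_X+\Delta\equiv 0$. Crucially, as recalled in Section \ref{bound.cyp.sect}, the coefficients of $\Delta$ arising from the discriminant of $f$ lie in a finite set $I=I(n)$ depending only on $n$; I would fix this $I$ once and for all at the outset.

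The second step is to verify that $(X,\Delta)$ satisfies all the hypotheses of Theorem \ref{main}. Its dimension is $n-1\leq 4$, the coefficients of $\Delta$ lie in the finite set $I$, and $K_X+\Delta\equiv 0$ by construction, so conditions (1), (2) (the coefficient part) and (3) hold. To obtain condition (4), I would show that the hypothesis that $X$ be not of product-type as a base of $Y\to X$ forces $(X,\Delta)$ to be not of product-type in the sense of Definition \ref{defproduct}: a product structure on the pair would propagate, via the fibration, to a product structure on $Y$.

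The remaining point is to confirm $\Delta\neq 0$, which is needed for condition (2) of Theorem \ref{main}. Here I would use the theorem of Koll\'ar and Larsen \cite{KL}: since $X$ is the base of an elliptic Calabi-Yau fibration and is not of product-type, $X$ is rationally connected, hence uniruled, so $K_X$ is not pseudoeffective. If $\Delta$ were zero, then $K_X\equiv 0$ would be pseudoeffective, a contradiction; therefore $\Delta\neq 0$.

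With every hypothesis in place, Theorem \ref{main} shows that the pairs $(X,\Delta)$ form a log birational bounded family, and \emph{a fortiori} the underlying varieties $X$ form a birationally bounded family; since the corollary concerns the bases only up to birational equivalence, this yields the desired finiteness. The main obstacle is the second step: reconciling the notion of product-type for the fibration $Y\to X$ with the product-type condition for the pair $(X,\Delta)$ of Definition \ref{defproduct}, together with guaranteeing that a single finite coefficient set $I$ works uniformly across the whole family. Everything else is a direct application of the machinery already established.
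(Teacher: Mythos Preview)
Your outline is broadly correct --- reduce to Theorem \ref{main} via the canonical bundle formula --- but it diverges from the paper precisely at the point you flag as the main obstacle, and your proposed resolution of that point is too vague to stand on its own.

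The paper does \emph{not} simply read the hypothesis ``$X$ is not of product-type'' as ``$(X,\Delta)$ is not of product-type in the sense of Definition~\ref{defproduct}'' and then invoke Theorem~\ref{main}. Instead, it begins from the assumption that the elliptic fibration $\pi\colon Y\to X$ is not birationally isotrivial and then \emph{proves} that $(X,\Delta)$ is not of product-type. The mechanism is a lifting argument: for each step of a $K_X$-MMP $X\dashrightarrow X'$, the paper constructs a corresponding birational modification $Y\dashrightarrow Y'$ together with an elliptic Calabi--Yau fibration $Y'\to X'$. Divisorial contractions are handled by composition; flips require showing that the pullback of an ample divisor from the flipped base is abundant on $Y$ and then applying Gongyo--Lehmann \cite{GL} to obtain a good minimal model. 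Running this all the way down the tower of Theorem~\ref{structure}, one sees that if $(X,\Delta)$ were of product-type the terminal variety $Y_k$ would have $K_{Y_k}\equiv 0$, forcing the induced elliptic fibration over it to be birationally isotrivial, contradicting the starting assumption. This lifting is the actual content of the paper's proof, and your sentence ``a product structure on the pair would propagate, via the fibration, to a product structure on $Y$'' gestures at it but does not supply it.

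A smaller point: your argument for $\Delta\neq 0$ via Koll\'ar--Larsen is essentially right but slightly misstated. You do not need the product-type hypothesis on $X$ here at all. Since $Y$ is Calabi--Yau in the strict sense (simply connected with $h^{i,0}(Y)=0$ for $0<i<\dim Y$), a K\"unneth computation shows $Y$ cannot be a nontrivial product; hence Koll\'ar--Larsen forces any lower-dimensional dominant image of $Y$ to be uniruled. This already gives $K_X$ not pseudoeffective, so $\Delta\neq 0$. (You claim rational connectedness, but Koll\'ar--Larsen only gives uniruledness, which suffices.)
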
 

Combining the previous results we get effective control of the torsion index for the canonical class $K_Y$ of $Y$.

\begin{corollary}
\label{effective.nonvanishing}
Fix a positive integer $n\leq 5$. 
Then there exists a positive integer $m_0=m_0(n)$ such that if 
\begin{enumerate}
\item $Y$ is a terminal projective of dimension $n$ and $K_Y \equiv 0$,
\item  there exists an elliptic fibration $Y\rightarrow X$, and
\item $Y$ is not of product type,
\end{enumerate}
then $h^0(Y,\mathcal{O}_Y (m(K_Y)))\neq 0$ for any $m$ divisible by $m_0$.
\end{corollary}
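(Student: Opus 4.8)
The plan is to reduce the effective non-vanishing of $mK_Y$ to a uniform bound on the order of $K_Y$ as a torsion class, and then to extract that bound from the boundedness results already established. First I would record that, since $Y$ is smooth projective with $K_Y$ numerically trivial, $K_Y$ is in fact torsion in $\mathrm{Pic}(Y)$: the class $c_1(Y)$ vanishes, Yau's theorem produces a Ricci-flat metric, and after a finite \'etale cover the Beauville--Bogomolov decomposition trivialises the canonical class, so $K_Y$ has some finite order $d_Y$. Consequently $mK_Y\sim 0$ whenever $d_Y\mid m$, and then $h^0(Y,\oo_Y(mK_Y))=h^0(Y,\oo_Y)=1\neq 0$. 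Thus it suffices to produce $m_0=m_0(n)$ divisible by every such $d_Y$, i.e. to bound $d_Y$ uniformly over all $Y$ as in the statement.

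Next I would descend the problem to the base $X$ via the canonical bundle formula. By Kodaira's formula and its higher dimensional generalisation applied to the elliptic fibration $f\colon Y\to X$, one has
\[
K_Y\sim_{\qq} f^*(K_X+\Delta+M),
\]
where $(X,\Delta)$ is klt, the coefficients of $\Delta$ lie in a fixed finite set $I=I(n)$ governed by the possible fibre types (cf. Section \ref{bound.cyp.sect}), and $M$ is the moduli part. Since $K_Y\equiv 0$ and $f^\ast$ is injective on numerical classes, it follows that $K_X+\Delta+M\equiv 0$, so that $X$ underlies a klt numerically trivial pair of dimension at most $4$ which is not of product-type. After replacing $M$ by a $\qq$-linearly equivalent boundary and enlarging $I$ so as to absorb the (uniformly controlled) contribution of the moduli part, we land exactly in the hypotheses of Theorem \ref{main}: the resulting pair is log birationally bounded, hence isomorphic in codimension $1$ to a member of a fixed bounded family $\D'$.

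It then remains to convert boundedness of this family into a bound on $d_Y$. On a bounded family of klt Calabi--Yau pairs the order of the torsion class $K_X+\Delta+M$ is uniformly bounded: a numerically trivial klt log canonical divisor is $\qq$-linearly trivial by abundance for numerically trivial pairs in these dimensions, isomorphisms in codimension $1$ preserve this order, and within a bounded family it is a constructible, hence bounded, invariant. If $e$ bounds this order and $a$ clears the denominators of the $\qq$-linear equivalence above, then $ae\,K_Y\sim ae\,f^*(K_X+\Delta+M)=a\,f^*\big(e(K_X+\Delta+M)\big)\sim 0$, so $d_Y\mid ae$ is bounded. Taking $m_0(n)$ to be a common multiple of the finitely many orders so obtained yields the statement.

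The main obstacle I anticipate is the uniform control of the moduli part $M$: one must show that $M$ is (effectively) b-semiample with denominators bounded only in terms of $n$, so that its contribution both to the coefficient set $I$ and to the index of $K_X+\Delta+M$ is uniform. Combined with the passage from the log canonical on $X$ back to $K_Y$ in the absence of a section, where multiple fibres of $f$ must be accounted for in the denominator $a$, this is the technical heart of the argument. The reduction to Theorem \ref{main} and the constructibility of the index over a bounded family, by contrast, are comparatively formal.
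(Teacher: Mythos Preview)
Your proposal is correct and follows essentially the same route as the paper: reduce to the base via the canonical bundle formula (Theorem~\ref{bundle}), apply the boundedness of such bases (Theorem~\ref{main} via Corollary~\ref{cy.fibration}, whose proof works with $K_Y\equiv 0$), and extract a uniform index bound by constructibility over the bounded family, which is exactly the content of Corollary~\ref{picard}. The paper's proof is a one-line citation of these three results; your write-up unpacks them, and your opening reformulation via Beauville--Bogomolov (bounding the torsion order of $K_Y$) is equivalent to the non-vanishing statement since for $K_Y\equiv 0$ one has $h^0(Y,\oo_Y(mK_Y))\neq 0$ if and only if $mK_Y\sim 0$.
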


Theorem~\ref{main} implies also uniform boundedness of the Picard number and effective non-vanishing of the log plurigenera of Calabi--Yau pairs.

\begin{corollary}
\label{picard}
	Fix a positive integer $n\leq 4$ and a finite set $I \subset [0,1]\cap\qq$. 
	Then there are integers $\rho$ and $m_0$ depending on $n$ and $I$ such that for any $(X,\Delta)$ as in Theorem~\ref{main}, we have $\rho(X)\leq \rho$ and $h^0(X,\oo_X (m(K_X +\Delta)))\neq 0$ for any $m$ divisible by $m_0$.
\end{corollary}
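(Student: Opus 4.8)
The plan is to derive both assertions from Theorem \ref{main} by transporting the two invariants in question across the isomorphism in codimension one into the bounded family $\mathfrak{D}'$, where honest boundedness lets me control them uniformly. Throughout I fix $(X,\Delta)\in\mathfrak{D}$ together with a pair $(X',\Delta')\in\mathfrak{D}'$ and an isomorphism in codimension one $\phi\colon X\dashrightarrow X'$ with $\phi_*\Delta=\Delta'$, as provided by Theorem \ref{main}.

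For the bound on the Picard number, the key point is that $\phi$ induces an isomorphism of class groups $\mathrm{Cl}(X)\cong\mathrm{Cl}(X')$: since $\phi$ is an isomorphism outside a closed subset of codimension at least two, taking strict transforms gives a bijection between prime divisors that preserves principal divisors, hence linear equivalence. Consequently $\mathrm{rank}\,\mathrm{Cl}(X)=\mathrm{rank}\,\mathrm{Cl}(X')$. On the one hand $\rho(X)\le\mathrm{rank}\,\mathrm{Cl}(X)$, because $\NS(X)$ is a quotient of $\mathrm{Pic}(X)\subseteq\mathrm{Cl}(X)$. On the other hand, choosing a resolution $W\to X'$ that fits into a simultaneous resolution of the bounded family $\mathfrak{D}'$, the pushforward $\mathrm{Cl}(W)\twoheadrightarrow\mathrm{Cl}(X')$ gives $\mathrm{rank}\,\mathrm{Cl}(X')\le\mathrm{rank}\,\mathrm{Cl}(W)$. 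Here I use that the varieties in play are rationally connected: by the theorem of Koll\'ar and Larsen recalled above, $X$ is rationally connected since $(X,\Delta)$ is not of product-type, hence so is its resolution $W$ (being birational to $X$), and therefore $h^1(W,\mathcal{O}_W)=0$, $\mathrm{Pic}^0(W)=0$, and $\mathrm{rank}\,\mathrm{Cl}(W)=\mathrm{rank}\,\mathrm{Pic}(W)=\rho(W)\le b_2(W)$. Since $W$ varies in a bounded family, $b_2(W)$ is uniformly bounded, and stringing the inequalities together yields $\rho(X)\le b_2(W)\le\rho$ for a constant $\rho=\rho(n,I)$.

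For the effective non-vanishing I first pass from numerical to $\mathbb{Q}$-linear triviality. As $(X,\Delta)$ is klt with $K_X+\Delta\equiv 0$, abundance in numerical dimension zero (Nakayama, Gongyo) gives $K_X+\Delta\sim_{\mathbb{Q}}0$; equivalently there is a positive integer $m$ with $m(K_X+\Delta)\sim 0$, and the least such $m$ is the order of $K_X+\Delta$ in $\mathrm{Cl}(X)$. Under the isomorphism $\mathrm{Cl}(X)\cong\mathrm{Cl}(X')$ above one has $\phi_*(K_X+\Delta)=K_{X'}+\Delta'$, so this order equals the order of $K_{X'}+\Delta'$ in $\mathrm{Cl}(X')$, and it suffices to bound the latter over $\mathfrak{D}'$. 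Over the bounded family the condition $\mathcal{O}(m(K_{X'}+\Delta'))\cong\mathcal{O}_{X'}$ is constructible in the parameters, so the order is a constructible, hence bounded, function on the base; taking $m_0$ to be a common multiple of these orders (absorbing also the bounded denominators of the coefficients in the finite set $I$) yields $m(K_X+\Delta)\sim 0$, and in particular $h^0(X,\mathcal{O}_X(m(K_X+\Delta)))\ne 0$, for every $m$ divisible by $m_0$.

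The main obstacle is not the formal transport of invariants but the two uniformity inputs on $\mathfrak{D}'$: that the second Betti numbers of resolutions stay bounded and that the torsion order of the log canonical class is bounded. Both genuinely use that Theorem \ref{main} supplies honest boundedness of $\mathfrak{D}'$, not merely birational boundedness, together with rational connectedness to kill $\mathrm{Pic}^0$ (without which $\mathrm{rank}\,\mathrm{Cl}$ would be infinite) and the finiteness of $I$ to bound the Cartier index. The one point requiring care is the compatibility of resolutions with the family so that $b_2$ can be read off uniformly; this is handled by stratifying the base of $\mathfrak{D}'$ and resolving the total space.
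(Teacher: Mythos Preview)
Your argument is correct and both halves reach the same conclusion as the paper, but the route you take for the Picard number bound is genuinely different from the paper's.

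For the non-vanishing, your proof is essentially the paper's: you make explicit the step $K_X+\Delta\equiv 0\Rightarrow K_X+\Delta\sim_{\mathbb{Q}}0$ (abundance in numerical dimension zero), then use invariance of $h^0$ under isomorphism in codimension one and Noetherian descent on the bounded family. The paper compresses this to ``birational invariance of $h^0(\mathcal{O}_X(m(K_X+\Delta)))$ in the Calabi-Yau case and noetherian descent''.

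For the Picard number, the paper does not pass through class groups and Betti numbers of a resolution. Instead, it invokes \cite[Prop.~2.8]{HX14}, which says that for a family $(Z,D)\to T$ with $\mathbb{Q}$-factorial generic fibre and $H^1(Z_t,\mathcal{O}_{Z_t})=0$, the restriction $\NS(Z/T)\to\NS(Z_t)$ is an isomorphism on a Zariski-dense set; then Noetherian induction bounds $\rho$. The vanishing of $H^1$ is obtained from rational connectedness (Remark~\ref{rat.connected}) together with rationality of klt singularities. Your approach via $\rho(X)\le \mathrm{rank}\,\mathrm{Cl}(X)=\mathrm{rank}\,\mathrm{Cl}(X')\le \rho(W)\le b_2(W)$ is more elementary and self-contained, avoiding the specific citation, at the cost of the extra stratification-and-resolution step. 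One small imprecision: the rational connectedness of $X$ here does not come from Koll\'ar--Larsen (which in the paper concerns smooth simply connected Calabi--Yau manifolds) but from the structure theorem, as in Remark~\ref{rat.connected}; the conclusion you need is unchanged.
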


All of the statements illustrated so far follow from the main technical result of this paper, Theorem~\ref{bounded.bases}.
Roughly speaking, this result shows that a family of Calabi--Yau pairs is bounded provided that each pair is endowed with a Mori fibration whose base also varies in a bounded family. 
It is not hard to see, thanks to standard results in the MMP,~\cite{BCHM}, that given a Calabi--Yau pair it is always  possible to pass to a birational one that is endowed with a Mori fibration. 
Hence, in view of such result, the original  boundedness problem for Calabi--Yau pairs is turned into a boundedness problem for the bases of Mori fibrations for which the total space admits a Calabi--Yau pair structure.
The bases of such fibrations are, in turn, Calabi--Yau pairs, although we lack control on the coefficients of the possible boundaries.

Theorem~\ref{bounded.bases} holds in any dimension and hence might constitute a possible inductive step in the generalization of this paper's main results to dimension greater than $5$.
In fact, it is not unreasonable to expect that the approach we carried out to prove Theorem~\ref{main} extends in higher dimension.
Indeed, this holds true automatically, if we assume a well known conjecture in birational geometry: after Birkar's proof of the BAB conjecture, cf. Theorem~\ref{bab.thm}, assuming the Effective Base Point Freeness Conjecture,~\cite[Conjecture~7.13.3]{PS} would immediately imply Theorem~\ref{main} in any dimension.
	Theorem~\ref{bounded.bases} has been recently refined by Birkar,~\cite{birkar.lcy.fibr}, by slightly different techniques to generalize the strategy formulated in this paper and extend Theorem~\ref{main} to higher dimension.

We expect Theorem~\ref{bounded.bases} to have a wide range of applications. 
For example, combining it with the main theorem in~\cite{HX14}, we obtain the following result.

\begin{corollary}
\label{max.var.cor}
	Fix a positive integer $n$ and a finite set $I\subset [0,1]\cap \mathbb{Q}$. 
	Let $\mathfrak{D}$ the set of pairs $(X, \Delta)$ satisfying
	\begin{enumerate}
	\item $(X, \Delta )$ is a projective  klt pair of dimension $n$,
		\item the coefficients of $\Delta$ belong to $I$,
		\item $K_X+\Delta \equiv 0$,
		\item there exists a contraction morphism $f\colon X\to Z$ with $\dim Z < \dim X$, 
		\item $\Delta$ is big over $Z$, and
		\item $f$ has maximal variation.
	\end{enumerate}
Then, there exists a bounded family of Calabi--Yau pairs $\mathfrak{D}'$ such that each pair in $\mathfrak{D}$ is isomorphic in codimension one to a pair in $\mathfrak{D}'$
\end{corollary}

The variation of $f$ is said to be maximal if there is a non-empty open set of $Z$ over which the isomorphism equivalence classes of pairs given by a fiber $F$ and the restriction of $\Delta$ to $F$ are finite.
Intuitively, $f$ has maximal variation if the image of the rational map from the base of the fibration to the moduli space of the fibres $(F,\Delta|_F)$ has maximal dimension. 
In particular, the moduli part in the canonical bundle formula is a big divisor, see \S~\ref{lc.trivial.fibrations} for more details, in particular Remark~\ref{rmk.max.var}.

Let us conclude with a remark. 
In all statements of this Introduction, the assumption on the existence of a finite set in which the coefficients for the divisor $\Delta$ may vary can be weakened to the existence of a set of coefficients satisfying the descending chain condition, see \S~\ref{prelim.sec} for the definition.
This is a now standard reduction in view of the fundamental results of~\cite{HMX14}. 
In particular, all the results of this paper work in this more general setting and their proofs will be carried out accordingly.

\subsection{Strategy of the proof} 
The proof of Theorem~\ref{main} consists of three main steps which we now proceed to summarize. 
The first two steps work in any dimension and only to complete the final one we need to impose the extra condition on the dimension of the Calabi--Yau pairs being at most 4.

The first step of the proof is to show that given a Calabi--Yau pair we can find a suitable birational model that allows us to adopt an inductive strategy of proof. 
This is done by deriving the following structure theorem for Calabi--Yau pairs.
\begin{theorem}
\label{notproduct}
Let $(X,\Delta)$ be a projective klt Calabi--Yau pair not of product-type with $\Delta \neq 0$. 
Then there exists a birational contraction 
\begin{align*}
		\pi \colon X \dashrightarrow X'
\end{align*}
to a $\mathbb{Q}$-factorial projective klt Calabi--Yau pair $(X', \Delta' := \pi_\ast \Delta)$ with $\Delta' \neq 0$ and a tower of morphisms
	\begin{displaymath}
		\xymatrix{
			X'=Y_0 \ar[r]^{p_0} & Y_1 \ar[r]^{p_1} & Y_2 \ar[r]^{p_2} &\dots 
			\ar[r]^{p_{k-1}}& Y_{k}
		}
\end{displaymath}		
\noindent	
such that each morphism $p_i\colon Y_i \to Y_{i+1}$ is a $K_{Y_i}$-Mori fibre space and $Y_k$ is Fano with klt singularities.
\end{theorem}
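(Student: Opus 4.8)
The plan is to build the tower by induction on $\dim X$, running a $K_X$-minimal model program (MMP) on the total space and recursing on the base of the resulting Mori fibre space. First I would reduce to the case where $X$ is $\qq$-factorial by passing to a small $\qq$-factorialization, which is an isomorphism in codimension one and hence preserves the Calabi-Yau pair structure; the birational contraction introduced here gets absorbed into the final $\pi$. The crucial preliminary observation is that $K_X$ is not pseudoeffective: since $K_X+\Delta\equiv 0$ with $\Delta>0$ we have $K_X\equiv-\Delta$, and if $K_X$ were pseudoeffective then both $\Delta$ and $-\Delta$ would be pseudoeffective, forcing $\Delta\equiv 0$; but a nonzero effective divisor on a projective variety has positive intersection with $H^{\dim X-1}$ for $H$ ample, so $\Delta=0$, a contradiction. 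By \cite{BCHM} and the non-pseudoeffectivity of $K_X$, a $K_X$-MMP then terminates with a Mori fibre space $p_0\colon X_0\to Z$, where $\pi_0\colon X\dashrightarrow X_0$ is a birational contraction. A short argument with the contracted rays shows no component of $\Delta$ is destroyed: on a contracted extremal ray $R$ one has $\Delta\cdot R=-K_X\cdot R>0$ while the contracted divisor is negative on $R$, so $\Delta_0:=\pi_{0\ast}\Delta$ stays effective and $(X_0,\Delta_0)$ is again a klt Calabi-Yau pair.

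Next I would equip the base $Z$ with an induced Calabi-Yau pair structure. Since $p_0$ is a Mori fibre space for $(X_0,\Delta_0)$ with $K_{X_0}+\Delta_0\equiv 0$, the fibres are log Calabi-Yau and the canonical bundle formula produces an effective divisor $\Delta_Z$ (the sum of the discriminant and moduli parts) such that $(Z,\Delta_Z)$ is a $\qq$-factorial klt Calabi-Yau pair. Here the pseudoeffectivity dichotomy reappears: if $K_Z$ is pseudoeffective, the argument above forces $\Delta_Z\equiv 0$, so the nef moduli part vanishes and $p_0\colon X_0\to Z$ is a birationally isotrivial fibration over a base with $K_Z\equiv 0$; this exhibits $(X,\Delta)$ as being of product-type, against our hypothesis. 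Hence $K_Z$ is not pseudoeffective and $\Delta_Z>0$, unless $Z$ is already Fano, in which case the tower $X_0\to Z$ is complete. When $Z$ is not Fano, I would check that $(Z,\Delta_Z)$ is itself not of product-type---any product-type witness on $Z$ pulls back to one on $X$---and apply the inductive hypothesis in dimension $\dim Z<\dim X$ to obtain a birational contraction $Z\dashrightarrow Z'$ together with a tower of $K$-Mori fibre spaces $Z'\to\dots\to Y_k$ ending in a Fano variety.

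The main obstacle, and the technical heart of the argument, is to promote the birational contraction $Z\dashrightarrow Z'$ on the base to a birational contraction $X_0\dashrightarrow X'$ of the total space fitting into a morphism $X'\to Z'$, so that the whole construction becomes a genuine tower of morphisms issuing from the single model $X'=Y_0$. The idea is to lift the $K_Z$-MMP realizing $Z\dashrightarrow Z'$ step by step along the fibration: because $p_0$ is a Mori fibre space, $K_{X_0}\equiv_Z 0$ with $K_{X_0}$ relatively anti-ample, so each $K_Z$-negative extremal contraction or flip downstairs is covered by a corresponding $K_{X_0}$-negative operation upstairs that preserves the Calabi-Yau pair structure and extracts no divisor; the Fano fibres guarantee $\Delta$ restricts to a nonzero anticanonical class and so survives, keeping $\Delta'=\pi_\ast\Delta>0$. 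Carrying this out rigorously---showing that flips and divisorial contractions on the base really do lift to birational contractions of the total space, that the resulting $X'\to Z'$ is still a $K_{X'}$-Mori fibre space, and that the generalized (nef moduli) part is controlled throughout---is where the bulk of the work lies. Granting the lifting, one prepends $X'\to Z'$ to the tower on $Z'$, composes $X\dashrightarrow X_0\dashrightarrow X'$ into $\pi$, and concludes by induction, since $\dim Z<\dim X$ forces termination with $Y_k$ Fano.
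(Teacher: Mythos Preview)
Your approach is essentially the paper's: reach a Mori fibre space $X_0\to Z$ by a $K_X$-MMP, use the canonical bundle formula on $Z$, and lift the base-level MMP back to the total space via the two-ray game. The paper proves the lifting (your ``technical heart'') as two standalone lemmas---one for divisorial contractions, one for flips---using that $X_0$ is a relative Mori dream space over the relevant target since $\Delta_0$ is relatively big.

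The one substantive wrinkle is your inductive step ``check that $(Z,\Delta_Z)$ is itself not of product-type---any product-type witness on $Z$ pulls back to one on $X$''. To justify this you would need to lift an \emph{arbitrary} birational contraction $Z\dashrightarrow Z''$ (the one appearing in the definition of product-type) to $X_0$, but the lifting lemmas only handle single divisorial contractions and flips of a $K$-MMP, and there is no reason a product-type witness factors through such steps. The paper sidesteps this by not recursing on the hypothesis of Theorem~\ref{notproduct} itself; instead it iterates directly, running the $K_{Y_i}$-MMP on each successive base, lifting step by step, and stopping when the base has numerically trivial canonical class. The product-type hypothesis is invoked only once, at the end: if the terminal $Y_k$ has positive dimension, the composed morphism $X'\to Y_k$ is birationally isotrivial, contradicting the assumption. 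You can repair your induction by proving instead the stronger structure statement (tower ending in a $K$-trivial variety, with product-type exactly when $\dim Y_k>0$) and reading off the Fano conclusion afterward.

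Two small points: the claim that ``no component of $\Delta$ is destroyed'' by the $K_X$-MMP is false---a component of $\Delta$ can be the exceptional divisor of a divisorial contraction---though the needed conclusion $\Delta_0>0$ still holds, since $K_{X_0}+\Delta_0\equiv 0$ with $K_{X_0}$ not nef; and the phrase ``$K_{X_0}\equiv_Z 0$ with $K_{X_0}$ relatively anti-ample'' is self-contradictory---you mean $K_{X_0}+\Delta_0\equiv_Z 0$.
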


Once we are in this framework, we proceed to prove that the set of Calabi--Yau pairs endowed with a Mori fibre space structure where the base belongs to a bounded family is also bounded.
That is in fact our second step.

\begin{theorem} 
\label{bounded.bases}
Fix a positive integer $n$, a DCC set $I\subset [0,1]$ and a bounded family $\mathfrak{F}$ of projective varieties. Then the set of pairs $(X, \Delta)$ satisfying
\begin{enumerate}
\item 
$(X, \Delta )$ is a projective klt pair of dimension $n$,
\item 
the coefficients of $\Delta$ belong to $I$,
\item 
$K_X+\Delta \equiv 0$,
\item 
there exists a contraction $f\colon X\to Z$ with $\dim Z < \dim X$
\item 
$-K_X$ is $f$-ample, and
\item 
$Z \in \mathfrak{F}$
\end{enumerate}
forms a bounded family.
\end{theorem}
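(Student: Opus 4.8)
The plan is to glue boundedness of the base $Y$ to boundedness of the general fibre, the two being held together by the numerical triviality of $K_X+\Delta$ read through the canonical bundle formula. Since $f$ is a Mori fibre space for $K_X$ and $K_X+\Delta\equiv 0$, on a general fibre $F$ the divisor $-K_F$ is ample and $(F,\Delta|_F)$ is a klt log Calabi--Yau pair with $\Delta|_F\equiv -K_F$ big, while $\dim F=n-\dim Y$ is bounded. First I would push $(X,\Delta)$ forward by the generalized canonical bundle formula of Prokhorov--Shokurov \cite{PS}, producing a generalized log Calabi--Yau pair $(Y,B+M)$ with $K_Y+B+M\equiv 0$ and $M$ nef. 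The role of the numerical triviality is to rigidify the moduli part: as $f$ is a contraction, $K_Y+B+M\equiv 0$ forces $M\equiv-(K_Y+B)$ to be determined up to numerical equivalence; the discriminant $B$ has coefficients in a fixed finite set by the global ACC of \cite{HMX14}; and since $Y\in\mathfrak{D}$ is bounded, the whole generalized-pair structure on the base is bounded.

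Second I would bound the general fibre. The fibres are klt Fano varieties of bounded dimension, so by the BAB theorem (cf. \ref{bab.thm}) it suffices to produce a uniform $\epsilon>0$ for which they are $\epsilon$-lc. This I would extract from the log Calabi--Yau structure $(F,\Delta|_F)$ with big boundary and DCC coefficients together with the global ACC, which confines the coefficients of $\Delta|_F$ to a finite set bounded away from $0$. Boundedness of the fibres yields in particular a uniform bound on $(-K_F)^{\dim F}$.

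Finally I would manufacture a global polarization and invoke Matsusaka-type boundedness. Fixing a very ample $A$ on $Y$ of bounded degree (possible as $\mathfrak{D}$ is bounded), the divisor $H:=-K_X+Nf^\ast A$ is ample for a uniform $N$, and I would bound $H^n$ and $K_X\cdot H^{n-1}$. Expanding $H^n$ via the projection formula reduces this to bounding the numbers $(-K_X)^{n-i}\cdot(f^\ast A)^i$: the term with $i=\dim Y$ is controlled by $(-K_F)^{\dim F}$ and the degree of $A$, whereas the remaining horizontal terms encode the twisting of the fibration, which I would control using $-K_X\equiv\Delta$ (effective, $f$-ample, with finite coefficients) and the boundedness of $(Y,B+M)$. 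Once $H$ is a very ample divisor of bounded degree on $X$ with $K_X\cdot H^{n-1}$ bounded, Matsusaka's theorem bounds $(X,H)$; and since $\Delta\equiv-K_X$ then has bounded degree against $H$ and coefficients in a finite set, its components are bounded as well, so $(X,\Delta)$ lies in a bounded family, as in \cite{HMX14}.

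The main obstacle is this last step, and within it the control of the horizontal intersection numbers: bounded base and bounded fibres do not by themselves bound the total space, since the projective bundles $\pp(E)$ over a fixed $Y$ already form an unbounded family, and it is exactly the numerical triviality of $K_X+\Delta$, transmitted by the rigidity of $M$ on the bounded base, that excludes unbounded twisting. A secondary difficulty is securing the uniform $\epsilon$-lc bound on the fibres required to apply BAB.
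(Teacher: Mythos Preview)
Your plan diverges substantially from the paper's, and the divergence is exactly at the point you yourself flag as the main obstacle.

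The paper never attempts to build a \emph{uniform} polarization on $X$, nor does it bound the horizontal intersection numbers $(-K_X)^{n-i}\cdot(f^\ast A)^i$ directly. Instead it proceeds in two stages (this is Theorem~\ref{bound.MFS.thm}). First, it establishes a uniform upper bound on $\Vol(c\Delta + f^\ast H)$, for a fixed integer $c$ making all coefficients of $c\Delta$ at least~$1$, by an inductive argument on $\dim Y$: this is Proposition~\ref{hyperplane}, whose engine is the perturbation Lemma~\ref{kltlemma} from \cite{HMX14} together with the canonical bundle formula applied to a deliberately ``overshooting'' boundary, contradicting non-uniruledness of a hyperplane section of the base if the volume were unbounded. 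That volume bound gives log birational boundedness via Theorem~\ref{main.hmx}. Second, for each pair one chooses $\delta>0$ small---explicitly \emph{not} uniform across the family---so that $K_X+(1+\delta)\Delta+H'$ is ample, and invokes Theorem~\ref{hmx_1.6} to pass from log birational boundedness to boundedness, using only that the coefficients are bounded below and the log discrepancy is uniformly bounded (Corollary~\ref{elc}). Neither BAB for the fibres nor the generalized pair $(Y,B+M)$ on the base is used here; the only input from $Y\in\mathfrak D$ is a very ample divisor of bounded degree.

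Your route via BAB on the fibres and Matsusaka on the total space is a reasonable heuristic, but the step you isolate is a genuine gap rather than a technicality. The assertion that $-K_X + Nf^\ast A$ is ample for a uniform $N$ already presupposes a bound on the twisting: on $\mathbb{F}_e\to\pp^1$ one needs $N>e-2$, and $e$ is bounded only \emph{because} the klt Calabi--Yau condition with DCC coefficients forces it (Example~\ref{hirzebruch})---which is precisely what you are trying to prove. Likewise, ``rigidity of $M$ on the bounded base'' does not control $(-K_X)^{n-i}\cdot(f^\ast A)^i$ for $0\le i<\dim Y$: the moduli part governs, roughly, the pushforward of powers of $-K_{X/Y}$ of degree equal to $\dim F$, not the higher self-intersections of $-K_X$ before pushing forward. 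The paper's detour through volumes and log birational boundedness is exactly what lets one avoid ever choosing a uniform ample class on $X$.
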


As $\Delta$ is relatively ample over $Z$, it is natural to consider ample divisors of the form $\delta\Delta + f^\ast H$, $0<\delta\ll 1$, where $H$ is a very ample Cartier divisor with bounded volume, coming from the boundedness of $Z$.
In order to prove boundedness for the pairs $(X, \Delta)$ in the statement of Theorem~\ref{bounded.bases}, by results of Hacon--M$^\textsc{c}$Kernan--Xu, see \S~\ref{boundedness.subsec}-\ref{bound.cyp.sect}, it suffices to show that we can choose a value of $\delta$ for which we can control the singularities of $(X, (1+\delta)\Delta + f^\ast H)$ as well as bound the volume of $\delta\Delta + f^\ast H$ uniformly on all the pairs.
Control on the singularities follows from the fact that we are working with Calabi--Yau pairs and the coefficients of $\Delta$ vary in a DCC set, cf. Theorem~\ref{hmx_1.5.thm} and Corollary~\ref{elc}. 
Instead, the volume estimate is achieved by reducing inductively to estimating the volume of the restriction of $\Delta$ to the general fiber $F$ of $f$, cf. Proposition~\ref{hyperplane}; 
as $F$ is Fano and $(F, \Delta|_F)$ is klt Calabi--Yau with DCC coefficients, the boundedness of the volume of $\Delta|_F$ is just Theorem~\ref{bound}.

It is well understood that Theorem~\ref{bounded.bases} fails without the klt assumption. 
For example, Hirzebruch surfaces give an example of an unbounded family of log canonical Calabi--Yau pairs admitting Mori fibre space structures over $\mathbb{P}^1$. 
Hence, in these hypotheses, we cannot admit singularities worse than klt, as shown in Example~\ref{hirzebruch}. 
In the case of threefolds, a similar boundedness result has been obtained by Jiang,~\cite{Jia15}, under slightly different assumptions.

The final step is to run induction on the number of factors appearing in Theorem~\ref{notproduct}. 
The base case can be done in any dimension using BAB, proved for surfaces by Alexeev,~\cite{Ale94}, and in general by Birkar,~\cite{bir16bab}.

\begin{corollary}
\label{bounded.lcy.mfs.thm}
Fix a DCC set $I\subset [0,1]$ and a positive integer $n$.
Then the set $\mathfrak D$ of pairs $(X, \Delta)$ satisfying
\begin{enumerate}
\item 
$(X, \Delta )$ is a projective klt pair of dimension $n$,
\item 
the coefficients of $\Delta$ belong to $I$,
\item 
$K_X+\Delta \equiv 0$,
\item 
there exists a contraction $f\colon X\to Z$ with $\dim Z < \dim X$
\item 
$-K_X$ is $f$-ample, and
\item 
$Z$ is normal and $-K_Z$ is big,
\end{enumerate}
forms a bounded family.
\end{corollary}

Unfortunately, the final inductive step in the proof of Theorem~\ref{main} can be carried out only in low dimension. 
The main issue here arises when the tower of Mori fibre spaces constructed in Theorem~\ref{notproduct} contains more than one step and the relative dimension of the morphism is $>1$. 
In this case we are not able to conclude that the intermediate varieties appearing in the tower carry a structure of Calabi--Yau pair with DCC coefficients --  a condition which would allow us to run our argument inductively. 
As the Effective Base Point Freeness Conjecture has been verified for morphisms of relative dimension $1$ by Prokhorov-Shokurov,~\cite{PS}, this does not constitute an issue in dimension up to $4$. 

Let us conclude this section by showing that Theorem~\ref{main} cannot be generalized to log canonical pairs with a simple example.

\begin{example}
\label{hirzebruch}
Let $X=\mathbb{F}_e$ be the $e$-th Hirzebruch surface and let $\Delta=\sum_{i\geq 0} \delta_i \Delta_i$ be an effective divisor such that $K_X+\Delta \equiv 0$. 
We can assume that $\Delta_0= C_0$, where $C_0$ is the section with $C_0^2 = -e$. 
By~\cite[Proposition~V.2.20]{Har}, for $i\geq 1$, we have that $\Delta_i \equiv a_i C_0 + b_i f$ with $a_i \geq 0$ and $b_i \geq a_i e$, where $f$ is the generic fibre. 
The Calabi--Yau condition implies that
	\[
		\begin{cases}
			\delta_{0}+\sum_{i\geq 1}\delta_{i}a_{i}=2, \\
			\sum_{i\geq 1}\delta_{i}b_{i}=e+2.
		\end{cases}
	\]
Then, 
\begin{align*}
e+2=\sum_{i\geq 1}\delta_{i}b_{i}\geq \sum_{i\geq 1}\delta_{i}a_{i}e=e(2-\delta_{0}).
\end{align*}
In particular, $(1-\delta_0)e\leq 2$. If we allow $\delta_0$ to be $1$, the family of Hirzebruch surfaces is an unbounded family of Calabi--Yau pairs with lc singularities. 
On the other hand, if we impose that $\delta_0$ is uniformly bounded away from $1$, the family is bounded. 
We remark that we are only requiring that $(X,\Delta)$ is klt along the generic fiber and as we will see later, this is a very important property for our strategy, cf. \S~\ref{boundedness.sect}. 
\end{example}

The paper is organized as follows: 
in \S~\ref{prelim.sec} we introduce the basic definitions and preliminary results that will be used in the rest of the paper;
in \S~\ref{CY.pairs.sect} we prove Theorem~\ref{notproduct};
in \S~\ref{boundedness.sect} we study Mori fibrations with bounded bases and prove those results that are needed for the proof Theorem~\ref{bounded.bases};
finally, the proofs of the results of the paper appear in \S~\ref{sec.proofs}-\ref{sec.cor.proof}.


\subsection*{Acknowledgements}
We would like to thank Paolo Cascini, Johan de Jong, Antonella Grassi, Christopher Hacon, J\'anos Koll\'ar, James M\textsuperscript{c}Kernan, Edward Witten and Chenyang Xu for many valuable conversations, suggestions and comments.
We wish to thank the anonymous referees for many corrections, improvements, and suggestions regarding the paper.


\section{Preliminaries}\label{prelim.sec}

Let $I \subset \mathbb{R}$ be a subset of the real numbers. We say that $I$ satisfies the descending (resp. ascending) 
chain condition, in short DCC (ACC), if any non-increasing (non-decreasing) subsequence of numbers in $I$ is eventually constant.

\subsection{Minimal model program} 
We follow the notation and definitions used in~\cite{KM}. 
Moreover, we will make use of the Minimal Model Program (MMP, in short) mostly for varieties with non pseudo-effective canonical class. 
In this case, the existence and termination of the MMP  has been established by Birkar--Cascini--Hacon--M$^\text{c}$Kernan. 

\begin{theorem}\label{mmp}\cite[Corollary~1.3.3]{BCHM}
Let $(X,\Delta)$ be a $\qq$-factorial klt pair. Assume $K_{X}+\Delta$ is not pseudo-effective. 
Then we may run a $(K_{X}+\Delta)$-MMP $g \colon X \dashrightarrow X'$ that terminates with a Mori fibre space $f\colon X' 	\rightarrow Z$.
\end{theorem}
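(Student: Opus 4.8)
The plan is to reduce the statement to the Minimal Model Program with scaling and then use the hypothesis that $K_X+\Delta$ is not pseudo-effective to exclude termination at a minimal model. First I would fix an ample $\qq$-divisor $A$ on $X$ such that $(X,\Delta+A)$ is still klt and $K_X+\Delta+A$ is nef; this is possible once $A$ is sufficiently positive. I would then run the $(K_X+\Delta)$-MMP with scaling of $A$, producing a sequence of $\qq$-factorial klt pairs $(X_i,\Delta_i)$ together with the nef thresholds $\lambda_i=\inf\{t\geq 0 : K_{X_i}+\Delta_i+t A_i \text{ is nef}\}$, where $\Delta_i$ and $A_i$ denote the strict transforms. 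Each step is a $(K_{X_i}+\Delta_i)$-negative extremal contraction, either a divisorial contraction or a flip.

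The two deep inputs I would invoke are both due to Birkar--Cascini--Hacon--M$^\text{c}$Kernan: the existence of $(K_X+\Delta)$-flips for klt pairs, which guarantees that every step of the program is well defined, and the termination of the MMP with scaling of an ample divisor. Granting these, the sequence above stops after finitely many flips and divisorial contractions at a pair $(Y,\Delta_Y)$ equipped with a final extremal contraction. The outcome is of exactly one of two shapes: either the contraction is birational and $K_Y+\Delta_Y$ is nef, i.e. $Y$ is a minimal model, or the contraction $f\colon Y\to Z$ has positive-dimensional fibres with relative Picard number one and $-(K_Y+\Delta_Y)$ relatively ample, i.e. $f$ is a Mori fibre space.

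It remains to rule out the first alternative, and this is where the hypothesis enters. Let $g\colon X\dashrightarrow Y$ be the composite birational contraction, and choose a common resolution $p\colon W\to X$, $q\colon W\to Y$. Comparing discrepancies via the negativity lemma, and using that $g$ is $(K_X+\Delta)$-nonpositive, yields $p^\ast(K_X+\Delta)=q^\ast(K_Y+\Delta_Y)+E$ with $E\geq 0$. Hence if $K_Y+\Delta_Y$ were nef, and so pseudo-effective, then $q^\ast(K_Y+\Delta_Y)+E$ and therefore $K_X+\Delta=p_\ast p^\ast(K_X+\Delta)$ would be pseudo-effective, contradicting the assumption. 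Thus the program cannot end with a minimal model and must terminate with a Mori fibre space $f\colon Y\to Z$, as claimed.

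The genuine difficulty is concentrated entirely in the two cited inputs, existence and termination of flips; everything else is the formal bookkeeping of the program together with the negativity lemma. The main obstacle, were one to prove the statement from scratch, would be the termination step, which in Birkar--Cascini--Hacon--M$^\text{c}$Kernan is obtained through an intricate induction on dimension intertwining finite generation of adjoint rings, existence of pl-flips, and special termination. Since we are free to cite that work, the argument above is essentially a repackaging of their results in the non-pseudo-effective case.
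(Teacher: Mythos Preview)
The paper does not prove this statement at all: Theorem~\ref{mmp} is stated purely as a citation of \cite[Cor.~1.3.3]{BCHM}, with no argument given. So there is no ``paper's own proof'' to compare against; the authors simply import the result wholesale.

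Your sketch is a correct and standard outline of how Corollary~1.3.3 is deduced from the main theorems of \cite{BCHM}. One small point worth tightening: the termination of the MMP with scaling of an ample divisor is proved in \cite{BCHM} under the hypothesis that the boundary is big (or in a relative setting), not for arbitrary klt pairs. The way this is handled in the non-pseudo-effective case is that, since $K_X+\Delta$ is not pseudo-effective, for $0<\epsilon\ll 1$ the divisor $K_X+\Delta+\epsilon A$ is still not pseudo-effective while $\Delta+\epsilon A$ is big; one then runs the $(K_X+\Delta)$-MMP with scaling of $A$, observing that each step is simultaneously a step of a $(K_X+\Delta+\epsilon A)$-MMP, to which the termination theorem applies directly. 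Your negativity-lemma argument ruling out a nef outcome is fine. The phrasing ``either the contraction is birational and $K_Y+\Delta_Y$ is nef'' is slightly garbled: if $K_Y+\Delta_Y$ is nef there is no further extremal contraction to perform; the dichotomy is simply nef versus Mori fibre space.
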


Let us recall the definition of a Mori fibre space. 

\begin{definition}
Let $(X,\Delta)$ be a klt pair and $f \colon X\rightarrow Z$ be a projective morphism of normal varieties with $\dim Z <\dim(X)$ and $f_\ast \mathcal{O}_X=\mathcal{O}_Z$. 
Then $f$ is a $(K_X+\Delta)$-Mori fibre space if 
	\begin{enumerate}
		\item $X$ is $\qq$-factorial,
		\item $f$ is a primitive contraction, i.e. the relative Picard number $\rho(X/Z)=1$ and 
		\item $-(K_{X}+\Delta)$ is $f$-ample.
	\end{enumerate}
\end{definition}

The conditions that  $\rho(X/Z)=1$ and $X$ is $\mathbb{Q}$-factorial readily imply that also $Z$ is $\mathbb{Q}$-factorial, see~\cite[Proposition~3.36]{KM}.

\subsection{Boundedness of pairs}
\label{boundedness.subsec}

We recall important definitions and results related to the notion of boundedness for log pairs, cf.~\cite[Definition~3.5.1]{HMX14}.

\begin{definition}
\label{def.bound}
A set $\D$ of projective log pairs is bounded if there exists a pair $(Z,B)$, and a projective morphism $f\colon Z \rightarrow T$ with $T$ of finite type, such that any $(X,\Delta) \in \D$ is isomorphic to the fiber $(Z_t, B_t)$ of $f$ for some closed point $t\in T$.
\\	
A set $\D$ of log pairs is log birationally bounded if there exists a pair $(Z,B)$, where the coefficients of $B$ are all $1$, and a projective morphism $f \colon Z \rightarrow T$ with $T$ of finite type, such that for any $(X,\Delta) \in \D$, there exists a closed point $t\in T$ and a birational map $g\colon Z_t \dashrightarrow X$ such that the support of $B_t$ contains the  support of the strict transform of $\Delta$ and any $g$-exceptional divisor.
\end{definition}

To show that a given set of log pairs is log birationally bounded, we will mainly use the following theorem which is a combination of results in \cite{HMX13, HMX14}.
\begin{theorem} 
\cite[Theorem~3.1]{HMX13},~\cite[Theorem~1.3]{HMX14} 
\label{main.hmx}
Fix two positive integers $n$ and $V$ and a DCC set $I\subset [0,1]$. Then the set of pairs $(X,\Delta)$ satisfying
	\begin{enumerate}
		\item $X$ is a projective variety of dimension $n$,
		\item $(X, \Delta)$ is lc, 
		\item the coefficients of $\Delta$ belong to $I$, and
		\item $0<\Vol(K_X+\Delta)\leq V$,
	\end{enumerate}
	is log birationally bounded.
\end{theorem}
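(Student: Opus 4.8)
The plan is to deduce log birational boundedness from three ingredients: a uniform positive lower bound on the volume, an effective birationality statement for the log canonical model, and a Hilbert/Chow-scheme argument bounding subvarieties of controlled degree in a fixed projective space. Throughout I work with the log canonical model: since $0<\Vol(K_X+\Delta)$ the pair is of log general type, so by \cite{BCHM} it admits a log canonical model $(X_{\min},\Delta_{\min})$ on which $K_{X_{\min}}+\Delta_{\min}$ is ample and whose volume equals $(K_{X_{\min}}+\Delta_{\min})^n=\Vol(K_X+\Delta)$. Because the conclusion we want only concerns pairs up to a birational map that absorbs exceptional divisors into a boundary with coefficients $1$, it suffices to bound the canonical models log birationally, and I will do this.

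First I would pin down the volume from below. The set of volumes of $n$-dimensional lc pairs of log general type with coefficients in the DCC set $I$ is itself a DCC set, by the main results of \cite{HMX14}. A DCC set of \emph{positive} reals has a positive minimal element, so there is a constant $v_0=v_0(n,I)>0$ with $\Vol(K_X+\Delta)\ge v_0$ for every pair under consideration. Combined with the hypothesis, this traps the volume in the interval $[v_0,V]$ with both endpoints fixed in terms of $n,I,V$.

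The heart of the argument, and what I expect to be the main obstacle, is \emph{effective birationality}: producing a single integer $m=m(n,I,V)$ such that on every $(X_{\min},\Delta_{\min})$ the rational map $\phi_m$ attached to $|\lfloor m(K_{X_{\min}}+\Delta_{\min})\rfloor|$ is birational onto its image. The technique is the usual non-klt-center machinery. Using that the volume is bounded below by $v_0$, an asymptotic Riemann--Roch estimate lets one construct, for a general pair of points $x_1,x_2$, an effective $\qq$-divisor $D\sim_\qq \lambda(K_{X_{\min}}+\Delta_{\min})$ with $\lambda$ bounded, whose non-klt locus has an isolated component at $x_1$ passing through $x_2$; Nadel/Kawamata--Viehweg vanishing then lifts a section vanishing at $x_1$ but not $x_2$, separating the two points. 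The uniformity of $m$ across the whole family is exactly where $v_0$ and the ACC for log canonical thresholds of \cite{HMX14} enter, the latter controlling the singularities of the auxiliary divisors and hence the coefficient $\lambda$ needed to cut down a center of bounded depth. This is the genuinely technical step; everything after it is bookkeeping.

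Once such an $m$ is fixed, I would conclude as follows. Replacing $\pp^N$ by the linear span we may assume the image $Y=\phi_m(X_{\min})\subset\pp^N$ is nondegenerate; then $\deg Y\le (m(K_{X_{\min}}+\Delta_{\min}))^n=m^n\Vol(K_X+\Delta)\le m^nV$, and nondegeneracy gives $N\le \deg Y+\dim Y-1\le m^nV+n-1$, so both $N$ and $\deg Y$ are bounded in terms of $n,I,V$. Subvarieties of bounded degree in a projective space of bounded dimension form a bounded family, parametrized by finitely many components of a Chow variety (or Hilbert scheme), giving a projective family $\mathcal{Y}\to T$ with $T$ of finite type. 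Taking a simultaneous log resolution of $\mathcal{Y}\to T$, and taking for $B$ the reduced divisor whose support contains the exceptional locus, the resolution of the image of $\Delta_{\min}$ (whose degree under $\phi_m$ is again bounded by $V$ because the coefficients lie in $[0,1]$), and all $g$-exceptional divisors, with every coefficient set to $1$, yields the bounding pair $(Z,B)\to T$ required by the definition of log birational boundedness.
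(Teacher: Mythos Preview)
The paper does not prove this statement: it appears in the preliminaries as a quoted result, explicitly attributed to \cite{HMX13} and \cite{HMX14}, and is used as a black box thereafter. There is therefore no proof in the paper to compare your attempt against.

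That said, your outline has a genuine gap. You open by passing to the log canonical model via \cite{BCHM}, but the pairs in the statement are only assumed to be \emph{lc}, and \cite{BCHM} establishes the existence of log canonical models for klt pairs. For arbitrary lc pairs of log general type this existence is not known in general. The actual argument in \cite{HMX13,HMX14} never invokes the canonical model: effective birationality of $|\lfloor m(K_X+\Delta)\rfloor|$ is proved directly on (a resolution of) $X$, and the Chow-variety bounding of the image proceeds from there. Your detour through a uniform lower volume bound $v_0$, obtained from the DCC of volumes, is also not how the constant $m$ is produced in \cite{HMX14}: there $m$ depends only on $n$ and $I$, and the DCC of volumes is established \emph{simultaneously} with effective birationality in an intertwined induction, so invoking it as a separate input here is somewhat circular. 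The upper bound $V$ enters only at the final step, to bound the degree of the image in projective space; the rest depends solely on $n$ and $I$. Apart from these two points your Chow-scheme conclusion is the correct endgame.
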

\noindent
The reader can find the definition of the volume $\Vol(D)$ of a divisor $D$ in \cite[Definition~2.2.31]{Laz}.

In certain special cases it is possible to deduce boundedness from log birational boundedness. 

\begin{theorem}\cite[Thorem~1.6]{HMX14}\label{hmx_1.6}
	Fix a positive integer $n$ and two positive real numbers $\delta$ and $\epsilon$. 
	Let $\mathfrak{D}$ be a set of log pairs $(X,\Delta)$ such that: 
	\begin{enumerate}
		\item $X$ is a projective variety of dimension $n$,
		\item $K_{X}+\Delta$ is ample,  
		\item the coefficients of $\Delta$ are at least $\delta$, and
		\item the log discrepancy of $(X,\Delta)$ is greater than $\epsilon$.
	\end{enumerate} 
	If $\mathfrak{D}$ is log birationally bounded then $\mathfrak{D}$ is a bounded set of log pairs.
\end{theorem}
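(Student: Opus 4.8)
The plan is to promote log birational boundedness to genuine boundedness in two movements: first bound the volume $\Vol(K_X+\Delta)$ from above and below using the fixed bounding family together with the singularity hypotheses, and then use this numerical control to realize each $X$ via a uniform pluricanonical system, so that the pairs fit into finitely many Hilbert schemes. By hypothesis there is a pair $(Z,B)$ with all coefficients of $B$ equal to $1$ and a projective morphism $(Z,B)\to T$ with $T$ of finite type, such that every $(X,\Delta)\in\mathfrak{D}$ admits a birational map $g\colon Z_t\dashrightarrow X$ with $\supp B_t$ containing the strict transform of $\Delta$ and every $g$-exceptional divisor. After stratifying $T$ and passing to a log resolution of $(Z,B)$ in the family, I may assume that $T$ is smooth and that each fibre $(Z_t,B_t)$ is log smooth.

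First I would bound the volume from above. Take a common resolution $p\colon W\to Z_t$ and $q\colon W\to X$ of $g$ and write $q^\ast(K_X+\Delta)=p^\ast(K_{Z_t}+\Gamma_t)+F$ with $F$ effective and $p$-exceptional, where $K_{Z_t}+\Gamma_t$ is the crepant descent of $K_X+\Delta$. The assumption that the log discrepancies of $(X,\Delta)$ exceed $\epsilon$ forces each $g$-exceptional coefficient of $\Gamma_t$ to be at most $1-\epsilon<1$, while the strict transform of $\Delta$ contributes coefficients at most $1$; since every component of $\Gamma_t$ lies in $\supp B_t$, this yields $\Gamma_t\le B_t$, and hence $\Vol(K_X+\Delta)=\Vol(K_{Z_t}+\Gamma_t)\le\Vol(K_{Z_t}+B_t)$ by monotonicity of the volume and the fact that $p$-exceptional summands do not affect it. Because $(Z,B)\to T$ is of finite type, the right-hand side is bounded above by a constant $V$ depending only on the family. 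On the other hand, as the coefficients of $\Delta$ lie in a DCC set with minimum at least $\delta$, the DCC property for volumes of lc pairs from \cite{HMX14} shows that the positive numbers $\Vol(K_X+\Delta)$ are bounded below by some $v_0>0$, so that $v_0\le\Vol(K_X+\Delta)\le V$ throughout $\mathfrak{D}$.

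With these volume bounds, the $\epsilon$-log canonical hypothesis, the coefficient bound $\delta$, and the ampleness of $K_X+\Delta$ all in hand, I would invoke effective birationality for adjoint systems: there is an integer $M=M(n,\delta,\epsilon,V)$, independent of the pair, such that $|M(K_X+\Delta)|$ defines a birational map. Since $\dim H^0(X,M(K_X+\Delta))$ and the degree $(M(K_X+\Delta))^n=M^n\Vol(K_X+\Delta)$ of the image are both bounded in terms of $n$, $M$ and $V$, these maps land, with bounded degree, in a single $\pp^N$; the closures of the images therefore lie in finitely many components of a fixed Chow or Hilbert scheme. Normalising the resulting universal family recovers $X$ in a bounded family, and the boundary $\Delta$ is restored as the strict transform of a divisor whose support is controlled by $B_t$ and whose coefficients lie in the fixed DCC set $I$, yielding boundedness of the pairs. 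Alternatively, one may argue more intrinsically: $(X,\Delta)$ is the log canonical model of $(Z_t,\Theta_t)$ for a boundary $\Theta_t\le B_t$ supported on $\supp B_t$, and finiteness of log canonical models as the boundary varies in the rational polytope attached to $\supp B_t$, via \cite{BCHM}, produces a bounded family of models over $T$.

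The main obstacle is the effective birationality step, namely producing a single multiple $M$ that works for the entire family. This is precisely where the hypotheses interlock: the upper bound on the volume, the lower bound $\delta$ on the coefficients, and the $\epsilon$-bound on the log discrepancies are exactly the inputs needed to run the pluricanonical lifting arguments uniformly and to keep every pair appearing in the construction klt. Without them the argument collapses: if the log discrepancies were allowed to approach $0$, equivalently the boundary coefficients to approach $1$, then no uniform $M$ exists and the models degenerate, which is the mechanism behind the unbounded Hirzebruch example of the paper.
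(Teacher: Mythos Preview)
The paper does not prove this statement: it is quoted verbatim as \cite[Thm.~1.6]{HMX14} in the preliminaries and used as a black box, so there is no proof in the paper to compare your attempt against.

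As a sketch of the actual argument in \cite{HMX14}, your proposal contains the right ingredients but misplaces the emphasis and has a gap. The hypothesis is only that the coefficients of $\Delta$ are at least $\delta$; the interval $[\delta,1]$ is not a DCC set, so you cannot invoke the DCC of volumes or the effective birationality theorem of \cite{HMX14} in the form you state, and your ``uniform $M$'' step is essentially assuming the conclusion. Your alternative route at the end---recognising $(X,\Delta)$ as the log canonical model of a pair $(Z_t,\Theta_t)$ with $\Theta_t$ supported on $B_t$ and then appealing to finiteness of models from \cite{BCHM} as the boundary varies in a fixed rational polytope---is in fact the heart of the proof in \cite{HMX14}, not an afterthought. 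The $\epsilon$-lc condition is what guarantees that the crepant pullback $\Gamma_t$ has all coefficients strictly below $1$, so that one stays in the klt range where \cite{BCHM} applies; the coefficient lower bound $\delta$ ensures that no component of the strict transform of $\Delta$ is lost when passing between models. If you want to turn this into an honest proof, drop the volume/effective-birationality framing and develop the finiteness-of-models argument directly, taking care with how it is done uniformly over the base $T$.
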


\subsection{Boundedness of Calabi-Yau pairs}
\label{bound.cyp.sect} 
In this subsection, we recall some of the results on boundedness of log pairs that we will need in the rest of this paper. 

A first key result in \cite{HMX14} is the following lemma which will play a fundamental role in \S \ref{boundedness.sect}. 

\begin{lemma}
\cite[Lemma~6.1]{HMX14}\label{kltlemma}
Fix a positive integer $n$ and a set $I \subset [0,1]$ satisfying the DCC.
Then there exists a constant $\epsilon=\epsilon(n, I)>0$ satisfying the following property: \newline 
	let $(X,\Delta)$ be a pair such that	
	\begin{enumerate}
		\item $X$ is a projective variety of dimension $n$,
		\item $(X, \Delta)$ is klt, 
		\item the coefficients of $\Delta$ belong to $I$,
		\item $\Delta$ is a big divisor, and 
		\item $K_X+\Delta$ is numerically trivial.
	\end{enumerate}
	If $\Phi$ is a divisor with $K_{X}+\Phi\equiv 0$ and $\Phi \geq (1-\delta)\Delta$ for some $\delta<\epsilon$, then $(X,\Phi)$ is klt. 
\end{lemma}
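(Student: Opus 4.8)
The plan is to argue by contradiction and extract a uniform gap from the ACC-type theorems of Hacon--McKernan--Xu. Suppose no such $\epsilon$ exists. Then for every $m$ there is a pair $(X_m,\Delta_m)$ satisfying (1)--(5), a real number $\delta_m<1/m$, and a divisor $\Phi_m\geq (1-\delta_m)\Delta_m$ with $K_{X_m}+\Phi_m\equiv 0$ such that $(X_m,\Phi_m)$ is \emph{not} klt. Put $G_m:=\Phi_m-(1-\delta_m)\Delta_m\geq 0$. Since $K_{X_m}+\Phi_m\equiv 0\equiv K_{X_m}+\Delta_m$ we get $\Phi_m\equiv\Delta_m$, hence $G_m\equiv\delta_m\Delta_m$. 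Thus the perturbation $G_m$ is \emph{numerically} as small as we like, even though a priori we control neither its coefficients nor its support.

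First I would reduce to the case where $(X_m,\Phi_m)$ is log canonical but not klt. Consider the segment $\Theta_s:=(1-s)\Delta_m+s\Phi_m$ for $s\in[0,1]$. For every divisorial valuation $v$ the log discrepancy $a_\ell(v,X_m,\Theta_s)$ is affine in $s$, so the set of $s$ with $(X_m,\Theta_s)$ klt is a subinterval of $[0,1]$ containing $0$; let $s_m$ be its right endpoint. Then $(X_m,\Theta_{s_m})$ is log canonical but not klt, is still numerically trivial, and still dominates $(1-\delta_m)\Delta_m$ because $\Theta_{s_m}\geq (1-s_m\delta_m)\Delta_m\geq(1-\delta_m)\Delta_m$. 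Replacing $\Phi_m$ by $\Theta_{s_m}$, I may assume $(X_m,\Phi_m)$ is log canonical but not klt, and choose a log canonical place $E_m$, i.e. $a_\ell(E_m,X_m,\Phi_m)=0$. Comparing discrepancies on a common log resolution $f_m$ and using $\Phi_m=(1-\delta_m)\Delta_m+G_m$ gives the identity $\operatorname{mult}_{E_m}f_m^\ast G_m=a_\ell\bigl(E_m,X_m,(1-\delta_m)\Delta_m\bigr)\geq a_\ell(E_m,X_m,\Delta_m)>0$, the last inequality because $(X_m,\Delta_m)$ is klt.

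The heart of the argument is to feed this configuration into the global ACC for numerically trivial pairs and the ACC for log canonical thresholds of \cite{HMX14}. The domination $\Phi_m\geq(1-\delta_m)\Delta_m$ with $\delta_m\to 0$ forces the coefficients of $\Phi_m$ along the components of $\Delta_m$ into the set $\bigcup_m(1-\delta_m)I$, which is again DCC since $\delta_m\to 0$ and $I$ is DCC; simultaneously $G_m\equiv\delta_m\Delta_m$ confines the remaining, uncontrolled part of $\Phi_m$ to a numerically vanishing class. With the coefficient information in place, I expect the family of log canonical, non-klt, numerically trivial pairs $(X_m,\Phi_m)$ to violate the finiteness guaranteed by the Global ACC; equivalently, testing the klt pairs $(X_m,(1-\delta_m)\Delta_m)$ against the perturbation $G_m$ should produce a threshold sequence that the ACC for log canonical thresholds forbids. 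Either route should contradict the existence of the $(X_m,\Phi_m)$ and thereby produce the uniform $\epsilon=\epsilon(n,I)$.

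The main obstacle is exactly the absence of control on $G_m$: an effective divisor that is numerically small can still be badly singular, so the tempting pointwise estimate $\operatorname{mult}_{E_m}f_m^\ast G_m\lesssim \delta_m\operatorname{mult}_{E_m}f_m^\ast\Delta_m$ is simply false, and any bound of that shape would require a fixed polarization, i.e. boundedness of the $X_m$, which we do not have. The whole point of invoking the birational-invariant ACC theorems is that they replace pointwise multiplicity bounds by the statement that the relevant thresholds cannot accumulate, a property insensitive to the geometry of the individual $X_m$. Within this strategy the delicate steps are two: first, arranging the threshold (or the coefficient data fed to the Global ACC) so that it is the \emph{ascending} chain condition that is contradicted, rather than merely the descending one, which the most obvious normalizations fail to achieve; and second, handling the possibly new components of $\Phi_m$, whose coefficients lie outside $I$ and must be shown to be absorbed by the numerical smallness of $G_m$. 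This is where essentially all the work sits.
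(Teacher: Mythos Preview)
The paper does not prove this lemma; it is quoted from \cite[Lemma~6.1]{HMX14} without proof, so there is no argument in the paper to compare against. Your setup by contradiction and the reduction, via interpolation along the segment $\Theta_s$, to the case where $(X_m,\Phi_m)$ is log canonical but not klt are both correct and match how the argument in \cite{HMX14} begins.

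The genuine gap is exactly where you locate it, but your proposal does not close it. Neither route you name can be applied as written. The Global ACC requires the boundary coefficients to lie in a fixed DCC set, and those of $\Phi_m$ do not: along a component $D$ of $\Delta_m$ you only have the \emph{lower bound} $\operatorname{mult}_D\Phi_m\geq(1-\delta_m)\operatorname{mult}_D\Delta_m$, not membership in $(1-\delta_m)I$ as you assert, and along the new components of $G_m$ you have no control at all. The ACC for log canonical thresholds likewise requires the test divisor to have coefficients in a fixed DCC set, which $G_m$ does not. So the sentence ``either route should contradict the existence of the $(X_m,\Phi_m)$'' is an aspiration rather than an argument, and your own closing remark that ``this is where essentially all the work sits'' is an honest admission that the proof is incomplete.

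What you are missing, and what the proof in \cite{HMX14} supplies, is the move that dispenses with $\Phi_m$ altogether after it has produced the lc place $E_m$. One extracts $E_m$ on a $\mathbb{Q}$-factorial model $\nu_m\colon W_m\to X_m$ and works instead with the pair $(W_m,\nu_{m\ast}^{-1}\Delta_m+E_m)$, whose coefficients lie in the DCC set $I\cup\{1\}$. The bigness of $\Delta_m$ --- hypothesis~(4), which your sketch never uses --- is then what drives a suitable MMP to a configuration where the Global ACC genuinely applies and the coefficient $1$ along $E_m$ becomes untenable. The conceptual point is that once $E_m$ has been extracted, $\Phi_m$ has done its job and can be discarded in favour of a boundary with controlled coefficients; trying to feed $\Phi_m$ or $G_m$ directly into an ACC statement, as you propose, cannot work.
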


A generalization of this result appeared in~\cite[Lemma~4.3]{rccy3}.

Lemma~\ref{kltlemma} easily implies that the boundary divisor of any klt Calabi-Yau pair has bounded volume,
once its coefficients belong to a DCC subset of $[0,1]$.

\begin{theorem}
~\cite[Theorem~B]{HMX14}
\label{bound}
	Fix a positive integer $n$ and a set $I \subset [0,1]$ satisfying the DCC. Then there exists a positive integer $M=M(n, I)$ such that if $(X,\Delta)$ is a pair with 
	\begin{enumerate}
		\item $X$ is a projective variety of dimension $n$,
		\item $(X, \Delta)$ is klt, 
		\item the coefficients of $\Delta$ belong to $I$, and
		\item $K_X+\Delta$ is numerically trivial,
	\end{enumerate}
	then $\Vol(X,\Delta)\leq M$.
\end{theorem}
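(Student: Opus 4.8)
The plan is to read $\Vol(X,\Delta)$ as the anticanonical volume $\Vol(-K_X)=\Vol(\Delta)$ (these agree, since $\Delta\equiv -K_X$ and the volume depends only on the numerical class) and to bound it by reducing to a pair whose log canonical class is big, of controlled positive volume, which can then be fed into the boundedness results recorded above. First I would dispose of the trivial case: if $\Delta$ is not big then $\Vol(X,\Delta)=0$ and there is nothing to prove, so I may assume $\Delta=-K_X$ is big. Let $\epsilon=\epsilon(n,I)$ be the constant furnished by Lemma \ref{kltlemma} and fix once and for all a rational number $0<\delta<\epsilon$; the content of the lemma is that the Calabi--Yau structure on $X$ is robust, in the sense that any effective $\Phi$ with $K_X+\Phi\equiv 0$ and $\Phi\ge (1-\delta)\Delta$ keeps $(X,\Phi)$ klt, which in particular forces a uniform lower bound on the log discrepancies of the perturbations of $(X,\Delta)$ we are about to form.

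Second, I would turn the numerically trivial class $K_X+\Delta$ into a big one by slightly enlarging the boundary. Writing $\Delta=\sum_i a_i\Delta_i$ and setting $\Delta^+=\sum_i\min\{1,\tfrac{a_i}{1-\delta}\}\Delta_i$, one has $K_X+\Delta^+\equiv \Delta^+-\Delta\ge 0$ and, away from the components whose coefficient has been capped at $1$,
\[
 \Delta^+-\Delta=\tfrac{\delta}{1-\delta}\,\Delta .
\]
Thus $\Vol(K_X+\Delta^+)$ is, up to the universal factor $(\tfrac{\delta}{1-\delta})^n$, comparable to $\Vol(\Delta)$; the coefficients of $\Delta^+$ lie in the fixed DCC set obtained from $I$ by scaling by $(1-\delta)^{-1}$ and truncating at $1$, these coefficients are bounded below by the least positive element of that set, and Lemma \ref{kltlemma} is exactly what guarantees that enlarging $\Delta$ in this way neither destroys log canonicity nor lets the log discrepancies degenerate. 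Passing to the ample model of $K_X+\Delta^+$, the resulting pairs therefore meet the hypotheses of Theorems \ref{main.hmx} and \ref{hmx_1.6}.

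Finally, once the enlarged pairs are known to form a log birationally bounded---hence, by Theorem \ref{hmx_1.6}, a genuinely bounded---family, their log canonical volumes $\Vol(K_X+\Delta^+)$ take finitely many values, and dividing by the universal factor bounds $\Vol(\Delta)=\Vol(X,\Delta)$ uniformly in terms of $n$ and $I$. The main obstacle is the very first input to this last step: producing an \emph{a priori upper} bound on $\Vol(K_X+\Delta^+)$, equivalently ruling out that the volume blows up along a sequence of such pairs, since the cited boundedness statements take a volume bound as a hypothesis rather than supplying one. This is where the Calabi--Yau hypothesis bites and is genuinely delicate: the log canonical class of a Calabi--Yau pair has volume zero, so one cannot invoke volume boundedness directly but must perturb first, and for boundary coefficients $a_i$ approaching $1$ the enlargement $\Delta^+-\Delta$ becomes small and may fail to be big---precisely the near-log-canonical regime that Example \ref{hirzebruch} shows to be unbounded. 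Controlling this regime, namely keeping the perturbed pair big while retaining a uniform lower bound on its log discrepancy, is exactly the role played by Lemma \ref{kltlemma}, and it is the crux on which the whole estimate rests.
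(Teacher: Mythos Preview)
Your proposal contains a genuine circularity that you yourself flag but do not resolve: Theorem \ref{main.hmx} takes an upper bound $V$ on the volume as \emph{input}, and since $\Vol(K_X+\Delta^+)$ is comparable to $\Vol(\Delta)$ up to your universal factor, this is exactly the quantity you are trying to bound. Feeding the perturbed pairs into the log birational boundedness machinery therefore cannot get off the ground without an independent volume estimate. (There is a secondary issue as well: Lemma \ref{kltlemma} only controls boundaries $\Phi$ with $K_X+\Phi\equiv 0$, so it does not directly certify that $(X,\Delta^+)$ is lc or has log discrepancy bounded below, since $K_X+\Delta^+\not\equiv 0$.)

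The paper does not give a proof---the result is quoted from \cite{HMX14}---but the remark preceding the statement, that Lemma \ref{kltlemma} ``easily implies'' it, points to a direct argument that bypasses boundedness of families entirely. Assume $\Delta$ is big, fix $0<\delta<\epsilon(n,I)$, and suppose $\Vol(\Delta)>(n/\delta)^n$, so that $\Vol(\delta\Delta)>n^n$. At any smooth point $p\in X$ one can then find an effective $D\sim_{\rr}\delta\Delta$ with $\mathrm{mult}_p D>n$. Setting $\Phi=(1-\delta)\Delta+D$ gives $\Phi\ge(1-\delta)\Delta$ and $K_X+\Phi\equiv 0$, yet the multiplicity forces $(X,\Phi)$ to be non-lc at $p$, contradicting Lemma \ref{kltlemma}. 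Hence $\Vol(\Delta)\le(n/\delta)^n$. The idea you are missing is that the kltness conclusion of the lemma is a \emph{constraint} on how singular the members of $|\delta\Delta|_{\rr}$ can be, while large volume manufactures arbitrarily singular members; this yields the bound in one stroke, with no appeal to Theorems \ref{main.hmx} or \ref{hmx_1.6}.
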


As a consequence of these results, we can even bound the singularities of Calabi-Yau pairs, by showing the coefficients of Calabi-Yau pairs actually belong to a finite set.

\begin{theorem}~\cite[Theorem~1.5]{HMX14}\label{hmx_1.5.thm}
	Fix a positive integer $n$ and a set $I \subset [0,1]$ satisfying 
	the DCC. Then there is a finite subset $I_0 \subseteq I$ with the following properties: \newline
	If $(X, \Delta)$ is a log pair such that
	\begin{enumerate}
		\item $X$ is a projective variety of dimension $n$,
		\item $(X, \Delta)$ is log canonical, 
		\item the coefficients of $\Delta$ belong to $I$, and
		\item $K_X+\Delta$ is numerically trivial,
	\end{enumerate}
	then the coefficients of $\Delta$ belong to $I_0$.
\end{theorem}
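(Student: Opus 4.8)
The plan is to recast the statement as an \emph{ascending} chain condition and combine it with the given descending one. Let $I_0 \subseteq I$ denote the set of coefficients that actually occur among the pairs in question; since $I_0 \subseteq I$ it automatically satisfies DCC, so it suffices to prove that $I_0$ also satisfies ACC, as a subset of $[0,1]$ that is both DCC and ACC is necessarily finite. Equivalently, I must rule out the existence of pairs $(X_i,\Delta_i)$ as in the hypotheses together with prime components $S_i \subseteq \Delta_i$ whose coefficients $a_i$ form a strictly increasing sequence. Assuming such a sequence exists, after passing to a subsequence (any infinite subset of a DCC set admits a strictly increasing subsequence) I may assume $a_i \nearrow a$ for some $a \in (0,1]$, and I will argue by induction on $n$ to reach a contradiction.

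First I would pass to a convenient birational model. Replacing each $(X_i,\Delta_i)$ by a $\mathbb{Q}$-factorial dlt modification (which exists by \cite{BCHM} and preserves both the numerical triviality of $K+\Delta$ and the coefficient $a_i$ on the birational transform of $S_i$, since the extra exceptional divisors enter with coefficient $1$), I may assume every $(X_i,\Delta_i)$ is $\mathbb{Q}$-factorial and dlt. The base case $n=1$ is elementary: on a curve the condition $K_X+\Delta\equiv 0$ forces $X\cong\mathbb{P}^1$ and $\sum_j a_j = 2$; since a DCC subset of $[0,1]$ has its positive elements bounded below, the number of components and hence the set of solutions of this equation involve only finitely many values of the $a_j$, so no strictly increasing sequence can occur.

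The heart of the argument is to drop the dimension by adjunction. The goal is to exhibit, for each $i$, a component $T_i$ of coefficient $1$ in $\Delta_i$ meeting $S_i$ and to restrict: by adjunction $(K_{X_i}+\Delta_i)|_{T_i}=K_{T_i}+\Theta_i\equiv 0$, where $\Theta_i=\diff_{T_i}(\Delta_i-T_i)$ is the different and $(T_i,\Theta_i)$ is again log canonical of dimension $n-1$. Two standard facts then drive the induction: the coefficients of the different lie in a DCC set $D(I)$ depending only on $I$, so the inductive hypothesis applies to $(T_i,\Theta_i)$ and shows that the coefficients of $\Theta_i$ take only finitely many values; and the contribution of $S_i\cap T_i$ to $\Theta_i$ is a strictly monotone function of $a_i$ (of the shape $a_i+(m-1)/m$ for the local index $m$ along the intersection). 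Finiteness of the different coefficients then forces $a_i$ to take finitely many values, contradicting $a_i\nearrow a$.

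The main obstacle is precisely the production and control of this adjunction step. When $\Delta_i$ has no component of coefficient $1$ — that is, in the genuinely klt range — there is no lc centre to restrict to, and one cannot simply raise the coefficient of $S_i$ to $1$ while remaining log canonical; this is where one needs the ACC for log canonical thresholds as an additional input, used to push coefficients up to their threshold values in a controlled way, or else to run a $(K+\Delta)$-trivial MMP that creates an lc centre. A further delicate point is the monotonicity step above: if the local index $m$ along $S_i\cap T_i$ is allowed to vary with $i$, distinct $a_i$ could a priori map to the same different coefficient, so one also has to bound $m$. For this reason the cleanest route runs a single simultaneous induction in which global ACC, ACC for log canonical thresholds, and the boundedness of volumes (Theorem \ref{bound}, together with the log birational boundedness of Theorems \ref{main.hmx} and \ref{hmx_1.6}) feed into one another; disentangling a clean linear order of implications is itself part of the difficulty, which is why I would expect the adjunction-plus-threshold machinery, rather than any single estimate, to be the crux.
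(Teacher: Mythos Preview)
This theorem is not proved in the present paper at all: it is quoted verbatim as \cite[Thm.~1.5]{HMX14} in the preliminaries section and used as a black box thereafter. There is therefore no ``paper's own proof'' to compare your proposal against.

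That said, your sketch is a fair outline of the argument that actually lives in \cite{HMX14}. You have correctly identified the key reduction (DCC plus ACC implies finite), the adjunction-to-the-different mechanism with the coefficient formula of the shape $\tfrac{m-1+f(a_i)}{m}$, and above all the real difficulty: in the klt range there is no lc centre to restrict to, so one cannot run the induction on dimension na\"ively. You are also right that the fix is not a single trick but a package---global ACC, ACC for log canonical thresholds, and the volume/boundedness statements of \cite{HMX14} are proved together by a simultaneous induction, and the present paper simply imports the end product. One small correction: the monotone dependence of the different coefficient on $a_i$ is not quite $a_i+(m-1)/m$ but rather $\tfrac{m-1+\sum n_j a_j}{m}$ with positive integers $n_j$; the point that distinct $a_i$ could collide for varying $m$ is handled in \cite{HMX14} not by bounding $m$ directly but by the DCC structure of the set $D(I)$ of possible different coefficients.
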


We will need the following well known generalization of the above theorem, see, for example,~\cite[Lemma~2.48]{bab16a}. 

\begin{corollary}
\label{elc}
Fix a positive integer $n$ and a set $I \subset [0,1]$ satisfying the DCC. 
Then there exists a positive real number $\epsilon=\epsilon(n, I)$ such that any $n$-dimensional projective klt Calabi--Yau pair $(X,\Delta)$ is $\epsilon$-lc. 
\end{corollary}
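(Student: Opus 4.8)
The plan is to argue by contradiction, extracting a divisor of small log discrepancy to produce a new Calabi-Yau pair whose boundary carries a coefficient close to $1$, and then invoking the finiteness of coefficients, Theorem \ref{hmx_1.5.thm}, to derive a contradiction. Suppose the statement fails for the given $n$ and $I$. Then for every $j\geq 1$ there is a klt Calabi-Yau pair $(X_j,\Delta_j)$ of the type considered which is not $\tfrac{1}{j}$-lc; by definition of the minimal log discrepancy there is a divisor $E_j$ over $X_j$ with $0< a_j:=a(E_j;X_j,\Delta_j)<\tfrac1j$, where positivity uses that the pair is klt. Passing to a subsequence, I may assume the $a_j$ are strictly decreasing, hence pairwise distinct.

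Since $a_j<1$, the valuation $E_j$ has negative discrepancy, so by \cite{BCHM} there is a $\qq$-factorial birational morphism $\pi_j\colon Y_j\to X_j$ whose unique exceptional divisor is $E_j$. The crepant identity
\[
	K_{Y_j}+(\pi_j)^{-1}_\ast\Delta_j+(1-a_j)E_j=\pi_j^\ast(K_{X_j}+\Delta_j)
\]
together with $K_{X_j}+\Delta_j\equiv 0$ shows that the pair $(Y_j,\Delta_{Y_j})$, where $\Delta_{Y_j}:=(\pi_j)^{-1}_\ast\Delta_j+(1-a_j)E_j$, again satisfies $K_{Y_j}+\Delta_{Y_j}\equiv 0$. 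As the pullback is crepant, the log discrepancies of $(Y_j,\Delta_{Y_j})$ agree with those of $(X_j,\Delta_j)$; in particular $(Y_j,\Delta_{Y_j})$ is again a klt, $n$-dimensional Calabi-Yau pair.

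The decisive step is the bookkeeping of coefficients. Those of $(\pi_j)^{-1}_\ast\Delta_j$ lie in $I$, while $E_j$ contributes the coefficient $1-a_j$; hence every $\Delta_{Y_j}$ has coefficients in
\[
	I':=I\cup\{\,1-a_j:\ j\geq 1\,\}\subseteq[0,1].
\]
Because the $a_j$ decrease to $0$, the new values increase to $1$, so $I'$ is still a DCC set (a union of two DCC sets is DCC). Applying Theorem \ref{hmx_1.5.thm} to $n$ and $I'$ yields a finite set $I'_0\subseteq I'$ containing the coefficients of every $n$-dimensional lc Calabi-Yau pair with coefficients in $I'$, so $1-a_j\in I'_0$ for all $j$. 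This is absurd, since the $1-a_j$ are infinitely many distinct reals.

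The only delicate point is the divisorial extraction $\pi_j$: one must realise a single valuation of log discrepancy $<1$ over a klt pair as the exceptional divisor of a $\qq$-factorial birational model, which is exactly where the machinery of \cite{BCHM} is needed. The remaining verifications — crepancy of the pullback, preservation of klt-ness, and the DCC property of $I'$ — are routine, and the argument works verbatim in every dimension, producing $\epsilon=\epsilon(n,I)>0$.
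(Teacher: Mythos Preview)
Your proof is correct and follows essentially the same approach as the paper: argue by contradiction, use \cite{BCHM} (the paper cites \cite[Cor.~1.4.3]{BCHM}) to extract a divisor of small log discrepancy, and apply Theorem~\ref{hmx_1.5.thm} to the resulting Calabi--Yau pairs whose coefficients lie in a DCC set to derive a contradiction. The paper's proof is considerably terser, but your version simply makes explicit the bookkeeping (the crepant pullback identity, the DCC property of $I'$, and the passage to a strictly decreasing subsequence of log discrepancies) that the paper leaves to the reader.
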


Imposing more conditions on the set of pairs, it is possible to deduce stronger boundedness statements. 
For example, in~\cite{HMX14}, the following result is proved.

\begin{theorem}
\cite[Theorem~1.7]{HMX14}
\label{hmx_1.7.thm}
	Fix a positive integer n, a real number $\epsilon>0$ and a set $I \subset [0,1]$ satisfying the DCC. 
	Let $\D$ be the set of all pairs $(X, \Delta)$ such that
	\begin{enumerate}
		\item $X$ is a projective variety of dimension $n$,
		\item the coefficients of $\Delta$ belong to $I$,
		\item the total log discrepancy of $(X, \Delta)$ is greater than $\epsilon$,
		\item $K_X+\Delta$ is numerically trivial, and
		\item $-K_X$ is ample.
	\end{enumerate}
	Then $\D$ forms a bounded family.
\end{theorem}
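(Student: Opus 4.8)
The plan is to deduce boundedness of the pairs from boundedness of the underlying varieties, where the latter is a direct consequence of the BAB theorem. First I would observe that, since $\Delta\geq 0$, deleting the boundary only increases log discrepancies, so the hypothesis that the total log discrepancy of $(X,\Delta)$ exceeds $\epsilon$ forces $(X,0)$ itself to be $\epsilon$-lc. As $-K_X$ is ample, each $X$ is then an $\epsilon$-lc Fano variety of dimension $n$, and hence the set of such $X$ is bounded by the BAB theorem (Theorem \ref{bab.thm}, \cite{bir16bab}). I would fix a bounded family $\mathcal{X}\to T$, with $T$ of finite type, realizing these varieties, together with a relatively very ample divisor $\mathcal{H}$, and write $H=\mathcal{H}_t$ for the induced polarization on each $X$.

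The remaining task is to bound the boundary $\Delta$ inside this bounded family of polarized varieties. Since $K_X+\Delta\equiv 0$ we have $\Delta\equiv -K_X$, so the intersection number $H^{n-1}\cdot\Delta=H^{n-1}\cdot(-K_X)$ is uniformly bounded, say by a constant $C$, because it is a numerical invariant of the fibre and therefore takes finitely many values over $T$. By Theorem \ref{hmx_1.5.thm}, applied to the log canonical pairs $(X,\Delta)$ with $K_X+\Delta\equiv 0$ and coefficients in the DCC set $I$, the coefficients of $\Delta$ in fact lie in a fixed finite set $I_0\subset I$; set $c_0=\min(I_0\setminus\{0\})>0$. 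Every prime component of $\Delta$ has $H$-degree a positive integer, whence the number of components is at most $C/c_0$ and each component has $H$-degree at most $C/c_0$.

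Consequently every prime component of $\Delta$ is a codimension-one subvariety of uniformly bounded degree in the fixed projective family determined by $H$, and hence these components vary in a bounded family, parametrized by finitely many pieces of the relevant Chow or Hilbert scheme. Combining this with the facts that there are boundedly many components and that the coefficients range over the finite set $I_0$, I would conclude that the divisors $\Delta$, and therefore the pairs $(X,\Delta)$, form a bounded family.

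The genuinely hard input is concentrated entirely in the first paragraph: it is the BAB theorem that produces the bounded family of Fano varieties $X$, and once this is granted the rest is bookkeeping with degrees and Hilbert schemes. I should note that one can avoid invoking BAB directly by instead perturbing to $\Delta'=(1+t)\Delta$, for which $K_X+\Delta'\equiv -tK_X$ is ample with uniformly bounded volume (using Theorem \ref{bound}), and then combining the log birational boundedness of Theorem \ref{main.hmx} with Theorem \ref{hmx_1.6}. The delicate point along that alternative route is the uniform choice of $t$ ensuring that $(X,\Delta')$ retains log discrepancy bounded away from $0$, which requires ACC-type control on the log canonical threshold of $\Delta$ across the family.
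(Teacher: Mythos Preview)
The paper does not give its own proof of this statement: it is quoted in the preliminaries as \cite[Thm.~1.7]{HMX14} and used as a black box. So there is no in-paper proof to compare against.

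Your argument via BAB is correct. Once $(X,0)$ is $\epsilon$-lc Fano, Theorem~\ref{bab.thm} bounds the underlying varieties, and the bookkeeping with $\Delta\equiv -K_X$, the finite coefficient set from Theorem~\ref{hmx_1.5.thm}, and Chow/Hilbert schemes then bounds the pairs. One small wrinkle: to say each prime component of $\Delta$ has integral $H$-degree you are implicitly using that $H$ is Cartier, which is fine since you chose $\mathcal{H}$ relatively very ample.

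It is worth flagging the logical/historical point more sharply. The result you are proving is from \cite{HMX14}, which predates \cite{bir16bab}; indeed, Theorem~\ref{hmx_1.7.thm} is one of the inputs feeding into Birkar's proof of BAB, so invoking Theorem~\ref{bab.thm} here is circular if one cares about the original dependency structure. Your final paragraph correctly identifies the alternative route that avoids this: perturb to $K_X+(1+t)\Delta\equiv -tK_X$, bound the volume via Theorem~\ref{bound}, get log birational boundedness from Theorem~\ref{main.hmx}, and upgrade to boundedness via Theorem~\ref{hmx_1.6}. The ``delicate point'' you mention, a uniform $t$ keeping $(X,(1+t)\Delta)$ bounded away from lc, is handled in \cite{HMX14} by ACC for the log canonical threshold together with Theorem~\ref{hmx_1.5.thm}: since the coefficients of $\Delta$ lie in a finite set and $K_X+\Delta\equiv 0$, the set of thresholds $\sup\{t:(X,(1+t)\Delta)\text{ is lc}\}$ satisfies ACC and is bounded away from $0$. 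This is essentially the original argument.
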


In a similar fashion, Hacon-Xu proved the following result.

\begin{theorem}
\cite[Theorem~1.3]{HX14}
\label{hx.thm}
	Fix a positive integer n and a DCC set $I \subset [0,1]\cap\qq$. Let $\D$ be the set of all pairs $(X, \Delta)$ such that
	\begin{enumerate}
		\item $X$ is a projective variety of dimension $n$,
		\item $(X,\Delta)$ is klt with coefficients of $\Delta$ in $I$,
		\item $K_X+\Delta$ is numerically trivial, and
		\item $\Delta$ is big.
	\end{enumerate}
	Then $\D$ forms a bounded family.
\end{theorem}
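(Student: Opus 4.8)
The plan is to reduce to the log birational boundedness criterion of Theorem~\ref{main.hmx} and then upgrade it to genuine boundedness, exploiting that bigness of $\Delta$ forces $X$ to be of Fano type. Throughout, note that $K_X+\Delta\equiv 0$ gives $-K_X\equiv\Delta$ big. For the reductions: by Theorem~\ref{hmx_1.5.thm} the coefficients of every admissible $\Delta$ lie in a fixed finite set $I_0\subseteq I$, so they are bounded below by some $\delta_0=\delta_0(n,I)>0$; by Corollary~\ref{elc} there is a uniform $\epsilon=\epsilon(n,I)>0$ with every $(X,\Delta)\in\D$ being $\epsilon$-lc; and Theorem~\ref{bound} provides a uniform bound $\Vol(-K_X)=\Vol(\Delta)\leq M=M(n,I)$.

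\emph{Log birational boundedness.} Since $\Delta$ is big and the pairs are uniformly $\epsilon$-lc with coefficients in the finite set $I_0$, for a sufficiently small rational $t>0$ (uniform in the family, by the ACC for log canonical thresholds of \cite{HMX14}) the pair $(X,\Phi)$ with $\Phi:=(1+t)\Delta$ is klt, with coefficients in the finite set $(1+t)I_0\subset[0,1]$. Now $K_X+\Phi=(K_X+\Delta)+t\Delta\equiv t\Delta$ is big, and $0<\Vol(K_X+\Phi)=t^{\,n}\Vol(\Delta)\leq t^{\,n}M$. Applying Theorem~\ref{main.hmx} with $V=t^{\,n}M$ shows that $\{(X,\Phi)\}$ is log birationally bounded; as $\supp\Phi=\supp\Delta$, the same bounded family of log pairs $(Z,B)\to T$ witnesses log birational boundedness of $\D$.

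\emph{Upgrade to boundedness.} Because $K_X+\Phi\equiv t\Delta$ is big and $(X,\Phi)$ is klt, by \cite{BCHM} the ample model $\psi\colon X\dashrightarrow X^{c}=\Proj\bigoplus_{m}H^0(X,-mK_X)$ exists; it is the anticanonical model, so $-K_{X^{c}}$ is ample and $X$ is of Fano type. Writing $\Delta^{c}:=\psi_\ast\Delta$, we get $K_{X^{c}}+\Delta^{c}\equiv 0$ with $\Delta^{c}$ a boundary with coefficients in $I_0$, so $(X^{c},\Delta^{c})$ is a klt Calabi-Yau pair on a Fano variety, again uniformly $\epsilon$-lc by Corollary~\ref{elc}; hence $\{(X^{c},\Delta^{c})\}$ is bounded by Theorem~\ref{hmx_1.7.thm}. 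To bound $X$ itself I would instead transfer to the log birational models: passing to a common resolution of $g\colon Z_t\dashrightarrow X$ and taking the crepant pullback, the condition $K_X+\Delta\equiv 0$ together with $\epsilon$-lc and finite coefficients produces a boundary $\Gamma_t$ on $Z_t$, supported on $B_t$ and with coefficients in $[0,1]$, satisfying $K_{Z_t}+\Gamma_t\equiv 0$. As $(Z_t,B_t)$ ranges in a bounded family and $\supp\Gamma_t\subseteq\supp B_t$, the pairs $(Z_t,\Gamma_t)$ form a bounded family, and $X$ is obtained from $Z_t$ by running a $(K_{Z_t}+\Theta_t)$-MMP for a suitable bounded boundary $\Theta_t$ that contracts precisely the $g$-exceptional divisors.

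\emph{Main obstacle.} The crux is exactly this last passage: boundedness is \emph{not} preserved under the birational contraction $\psi$, so the boundedness of the anticanonical models $X^{c}$ and of the log birational models $(Z_t,\Gamma_t)$ does not formally bound $X$. One must carry out the model-extracting MMP uniformly over the base $T$, so that the finitely many birational modifications relating $(Z_t,\Gamma_t)$ to $X$ take place in bounded families; controlling this uniform MMP in families, using that every such $X$ is a Mori dream space of Fano type, is the technical heart of the statement, and is carried out in \cite{HX14}.
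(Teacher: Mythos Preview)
The paper does not prove this statement; Theorem~\ref{hx.thm} is quoted from \cite[Thm.~1.3]{HX14} as a preliminary result and carries no proof here. There is thus nothing in the paper to compare your proposal against.

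That said, your outline is essentially the strategy of \cite{HX14}: reduce to a finite coefficient set and uniform $\epsilon$-lc via Theorem~\ref{hmx_1.5.thm} and Corollary~\ref{elc}, obtain log birational boundedness by perturbing to $(X,(1+t)\Delta)$ with $K_X+(1+t)\Delta\equiv t\Delta$ big of bounded volume and applying Theorem~\ref{main.hmx}, and then exploit that $-K_X\equiv\Delta$ big makes $X$ of Fano type. You correctly isolate the genuine difficulty: upgrading log birational boundedness to boundedness requires running the extraction/contraction MMP \emph{uniformly over the base} of the bounding family, and this is exactly what \cite{HX14} supplies (their Prop.~2.5 and the surrounding family-MMP arguments; note that the present paper invokes precisely this proposition in the proof of Theorem~\ref{main}). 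Your sketch of the upgrade is a bit loose---the map $Z_t\dashrightarrow X$ need not be a morphism, so ``contracting the $g$-exceptional divisors'' is not literally an MMP step from $Z_t$, and one must instead pass through a common resolution and control both directions---but you flag this as the obstacle and defer to \cite{HX14}, which is the honest thing to do.
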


The above theorems can be thought as a first step towards results predicted by the famous BAB conjecture, which has recently been solved by Birkar.

\begin{theorem}
\cite[Corollary~1.2]{bir16bab}
\label{bab.thm}
		Fix a positive integer n and a positive real number $\epsilon$.
	Then the set of $n$-dimensional projective varieties $X$ such that:
	\begin{enumerate}
	\item there exists an effective divisor $\Delta$ such that the pair $(X, \Delta)$ is an $\epsilon$-lc Calabi--Yau pair, and 
	\item $\Delta$ is big
	\end{enumerate}
forms a bounded family.
\end{theorem}

In dimension $2$, Alexeev proved a more general statement, which we will use in Section~\ref{sec.proofs}.

\begin{theorem}
\cite[Theorem~6.8]{Ale94}
\label{alexeev.thm}
Let $\epsilon$ be a positive real number. 
Let $\mathfrak D$ be the set of projective surfaces $X$ such that there exists an $\rr$-divisor $\Delta$ on $X$ with $(X,\Delta)$ $\epsilon$-klt and $-(K_X + \Delta)$ is nef, where we exclude those surfaces $X$ for which at the same time $K_X$ is numerically trivial and $X$ has at worst Du Val singularities. 
Then $\mathfrak D$ is bounded.
\end{theorem}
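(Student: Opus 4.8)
The plan is to bound, in terms of $\epsilon$ alone, three invariants of $X$ --- the analytic types of its singularities, the degree $(K_X+\Delta)^2$, and the Picard number of its minimal resolution --- and then to assemble these into an honest boundedness statement. Throughout I assume $\Delta\geq 0$; since dropping an effective boundary only raises discrepancies, the pair $(X,0)$ is again $\epsilon$-klt, so $X$ itself has $\epsilon$-klt singularities.

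First I would dispose of the singularities. Klt surface singularities are quotient singularities, and for a fixed $\epsilon>0$ there are only finitely many isomorphism types of $\epsilon$-klt quotient singularities: the minimal log discrepancy of such a point is an explicit positive number that degenerates to $0$ as the order of the local group grows, so the constraint $\mathrm{mld}>\epsilon$ leaves finitely many dual graphs. Consequently the number of exceptional curves over each singular point, their self-intersections, and all the attached discrepancies are bounded uniformly in $\epsilon$.

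The heart of the matter is the upper bound $(K_X+\Delta)^2=\bigl(-(K_X+\Delta)\bigr)^2\leq D(\epsilon)$, together with a bound on the number of singular points. Here I would run a $(K_X+\Delta)$-MMP. If $K_X+\Delta\equiv 0$ the degree vanishes and we are in the Calabi-Yau situation treated below; otherwise $K_X+\Delta$ is not pseudo-effective and the program ends with a Mori fibre space $X'\to Z$, where $Z$ is a point (so $X'$ is a del Pezzo surface of Picard number one) or a curve (so $X'\to Z$ is a conic or ruled fibration). In each base case the degree and the singular points of $X'$ can be bounded directly from the $\epsilon$-klt hypothesis. The delicate part is to control how the self-intersection of the log canonical class and the configuration of singular points change along each divisorial contraction: every step removes a single $(-(K_X+\Delta))$-positive extremal curve, and one must show, by a Diophantine analysis of the possible extremal contractions constrained by the discrepancy bound $>-1+\epsilon$, that both the number of steps and the total jump in degree are bounded. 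This is where essentially all the difficulty concentrates, and it is the main obstacle; everything else is comparatively formal.

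Granting the degree bound, the Picard number of the minimal resolution $\tilde X$ equals $\rho(X)$ plus the total number of exceptional curves, the latter being the bounded number of singular points times the bounded graph size, while $\rho(X)$ is pinned down by the degree bound and the bounded number of $(-(K_X+\Delta))$-trivial curves. With bounded singularity types, bounded $\rho(\tilde X)$ and bounded degree, boundedness follows in the standard way: when $-(K_X+\Delta)$ is nef and big, contracting its trivial curves yields the anticanonical model, a genuine $\epsilon$-klt log del Pezzo on which $-(K_X+\Delta)$ is ample of bounded degree, and such surfaces are bounded by Matsusaka-type arguments, with $X$ recovered from it by a modification of bounded complexity. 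The degenerate strata where $(-(K_X+\Delta))^2=0$ are handled separately by bounding the base and the general fibre of the induced fibration. The family one must exclude --- $K_X$ numerically trivial with at worst Du Val singularities --- is exactly the locus where the degree is zero and the minimal resolution ranges over the unbounded families of K3, Enriques and abelian surfaces, which is precisely why it has to be removed from the statement.
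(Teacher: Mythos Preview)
The paper does not prove this statement at all; it is quoted verbatim as \cite[Theorem 6.8]{Ale94} and used as a black box in Section~\ref{sec.proofs}. There is therefore no ``paper's own proof'' to compare against.

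Your outline is a faithful high-level summary of Alexeev's original strategy: bound the analytic singularity types via the $\epsilon$-klt constraint, run a $(K_X+\Delta)$-MMP to a Mori fibre space, bound the degree $(K_X+\Delta)^2$ and the length of the MMP by a step-by-step Diophantine analysis of the possible extremal contractions, and deduce a bound on $\rho(\tilde X)$. You correctly identify that the entire content of the theorem is concentrated in the third of these, and you do not attempt to carry it out. This is honest, but it means your proposal is a road map rather than a proof: the bound on the number of MMP steps and on the jump in degree at each step is precisely the long combinatorial core of \cite{Ale94}, occupying the bulk of that paper, and nothing in your sketch indicates how one would actually establish it. In particular, the phrase ``a Diophantine analysis of the possible extremal contractions constrained by the discrepancy bound'' hides a substantial case analysis (weighted blow-ups, the interaction of the boundary with the exceptional curve, propagation of the $\epsilon$-klt condition) that does not follow from anything you have written.

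So: your approach is the right one and matches Alexeev's, but what you have is a correct outline with the main theorem-proving step explicitly deferred, not a proof.
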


\subsection{Generalised pairs}
\label{sect.gen.pair} 
For the definition of b-divisor and related notions, we refer the reader to~\cite{bir.zhang}.
There, the authors introduced also the notion of generalised pairs. 
Let us recall that a b-$\mathbb{R}$-divisor $\mathbf{N}$ is said to descend to the divisor $N'$ on a model $X'$ if $\mathbf{N}$ equals the Cartier closure of its trace $\mathbf{N}_{X'}$ on $X'$ and $\mathbf{N}_{X'}=N'$.
\begin{definition}
Let $T$ be a variety. 
A generalised polarised pair over $T$ is a tuple $(X' \to X, B, M')$ consisting of the following data:
\begin{enumerate}
\item a normal variety $X \to T$ projective over $T$ equipped with a projective birational morphism $\phi \colon X' \rightarrow X$, 
\item an effective $\mathbb{R}$-Weil divisor $B$ on $X$,
\item a b-$\mathbb{R}$-Cartier b-divisor $\mathbf{M}$ over $X$ which descends on $X'$ such that $M':=\mathbf{M}_{X'}$ is nef over $T$, and
\item $K_X +B+ M$ is $\mathbb R$-Cartier, where $M := \phi_\ast M'$.
\end{enumerate}
\end{definition}

When no confusion arises, we refer to the pair by saying that $(X, B+M)$ is a generalised pair with data $X' \rightarrow X \rightarrow T$ and $M'$. 
We call $M'$ the nef part of the generalised pair.

Similarly to log pairs, we can define discrepancies and singularities for generalised pairs, cf.~\cite[Definition~4.1]{bir.zhang}. 

\subsection{Lc-trivial fibrations}
\label{lc.trivial.fibrations}

Since in this paper we deal mostly with Mori fibrations of Calabi-Yau pairs, in this subsection we collect some specific results about lc-trivial fibrations which can be applied to our setting. 

An lc-trivial fibration is a contraction $f \colon X\rightarrow Z$ of normal varieties, with $\dim Z >0$, for which there is an effective divisor $\Delta$ on $X$ such that the pair $(X,\Delta)$ is klt over the generic point of $Z$ and $K_{X}+\Delta \sim_{f,\rr} 0$. 
We will denote this by saying that $f \colon (X, \Delta) \to Z$ is an lc-trivial fibration.
For more details, we refer to~\cite{Amb04, Amb05}, and to~\cite[Theorem~3.1]{MR2944479} for the case of real coefficients.   
The main result which we will need is the following theorem, also known as the canonical bundle formula.

\begin{theorem}
\label{bundle}
Let $f \colon (X, \Delta)\rightarrow Z$ be an lc-trivial fibration.
Then there exist an effective divisor $B_{Z}$ on $Z$ and a b-divisor $\mathbf M$ descending to a nef divisor $M':=\mathbf M_{Z'}$ on a higher model $Z' \to Z$
such that the tuple $(Z' \to Z, B_Z, \mathbf M)$ is a generalised pair.
Moreover, if $(X,\Delta)$ is klt then there exists an effective divisor $0 \leq \overline{M} \sim_\rr M_Z$ such that $(Z,B_{Z} +\overline{M})$ is klt.
\end{theorem}

In the rest of the paper we will refer to $B_Z$ (resp. to $M_Z$) as the boundary part (resp. the moduli part) of the fibration $f$.
It is actually possible to define divisors $B_{Z'}, M_{Z'}$ on every birational model $Z' \to Z$ over $Z$ that are analogues of $B_Z, M_Z$ and that capture the behaviour of the pullback of the fibration $f$ to $Z'$.
Moreover, the collection of all $B_{Z'}$ (resp. $M_{Z'}$) fit together to form a so-called b-divisor, see~\cite[\S~2]{Amb04} for more details.
\\
We quickly recall the definition of $B_Z$: we set 
\begin{equation}
\label{def.bound.part.eqn}
B_Z := \sum (1-l_D) D,
\end{equation} 
where the sum is taken over every prime divisor $D$ on $Z$, and $l_D$ is the log canonical threshold of $f^*D$ with respect to $(X,\Delta)$ over the generic point of $D$. 

The following lemma shows that in many cases we can control the coefficients of the boundary part.
\begin{lemma}
\label{acc.lemma}
Let $I$ be a DCC set.
Let $(X, \Delta)$ be a pair such that the coefficients of $\Delta$ are contained in $I$.
Let $f \colon (X, \Delta) \to Z$ be an lc-trivial fibration with $K_X+ \Delta\sim_{f, \mathbb{R}}0$.
Then, there exists a DCC set $J$ such that the coefficients of the boundary part $B_Z$ are contained in $J$.
\end{lemma}

\begin{proof}
Given a prime divisor $D$ on $Z$, then the coefficient $\mu_D B_Z$ of $D$ in $B_Z$ satisfies $\mu_D B_Z = 1 -l_D$, where $l_D$ is the log canonical threshold of $f^\ast D$ with respect to $(X,\Delta)$ over the generic point of $D$, see~\eqref{def.bound.part.eqn}. 
As $l_D$ is computed over the generic point of $D$, we can assume that the coefficients of $f^\ast D$ belong to $\mathbb{N}_{>0}$. 
Since also the coefficients of $\Delta$ belong to a DCC set, \cite[Theorem 1.1]{HMX14} implies that there exists an ACC set $J'=J'(\dim X, I, \mathbb{N}_{>0})$ such that $l_D$ belongs to $J'$.
Defining $J:=\{1-t \ \vert \ t \in J'\}$, then $J$ is a DCC set and $\mu_D B_Z$ belongs to $J$.
\end{proof}

It is important for induction purposes to be able to control the singularities of the base of an lc-trivial fibration. 
By a theorem of Birkar, it is possible to do so under some conditions on the singularities
of the generic fibre.

\begin{theorem}
\cite[Theorem~1.4]{Bir16}
\label{birk.thm}
	Let $n$ be a positive integer, $\epsilon>0$ be a real number and let $\mathfrak{D}$ be a bounded set of pairs. 
	Then there is a real number $\delta=\delta(n,\epsilon,\mathfrak{D})>0$ satisfying the following: 
	\\
	let $(X,\Delta)$ be a projective log pair and $f\colon X \rightarrow Z$ be a contraction such that 
	\begin{enumerate}
		\item $(X,\Delta)$ is $\epsilon$-lc of dimension $n$,
		\item $K_X+\Delta \sim_{f, \rr} 0$,
		\item the general fibre $F$ of $f$ is of Fano type, and
		\item $(F,\Delta\vert_F)\in \mathfrak{D}$.
	\end{enumerate}
	Then we can choose an $\rr$-divisor $M_{Z}\geq 0$ representing the moduli part so that $(Z, B_Z+M_Z)$ is $\delta$-lc.
\end{theorem}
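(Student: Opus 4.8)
The plan is to run the canonical bundle formula of Theorem \ref{bundle} and then control the boundary part and the moduli part of $K_Z + B_Z + M_Z$ separately, the moduli part being the genuinely hard one. First I would apply Theorem \ref{bundle} to write $K_X + \Delta \sim_\rr f^*(K_Z + B_Z + M_Z)$ with $B_Z$ effective and $M_Z$ pseudo-effective, and recall that after a birational base change $Z' \to Z$ the moduli part descends to a nef $\rr$-divisor $M_{Z'}$, the trace on $Z'$ of the moduli b-divisor $\mathbf{M}$. The coefficient of a prime divisor $P$ in $B_Z$ is $1 - \gamma_P$, where $\gamma_P$ is the log canonical threshold of $f^*P$ over the generic point of $P$ computed on $(X,\Delta)$; since $(X,\Delta)$ is $\epsilon$-lc these thresholds are bounded away from the degenerate range, so the contribution of $B_Z$ to the discrepancies over $Z$ is already under control in terms of $\epsilon$. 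The substantive work is to choose an effective representative $M_Z \geq 0$ of the moduli class and to bound the singularities it produces.

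The key is the hypothesis that the general fibre $(F, \Delta_F)$ is of Fano type and ranges in the \emph{bounded} family $\D$. For such a family there is a bounded integer $N = N(\dim F, \D)$ so that the generic fibre carries an $N$-complement; spreading this complement out over $Z$ effectivises the moduli part with bounded denominator, giving a representative with $NM_Z$ integral and, on a sufficiently high model $Z'$, a nef and base point free divisor $NM_{Z'}$ (this is the effective semiampleness of $\mathbf{M}$ in the Fano-type case, the technical heart of the matter). I would then take $M_Z$ to be the push-forward of a general member of $\tfrac{1}{N}\,|NM_{Z'}|$, so that by generality $M_Z$ does not concentrate along any prescribed divisorial centre; hence the coefficient it contributes along a fixed valuation is uniformly small in terms of $n$, $\epsilon$ and $\D$.

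To finish I would check the $\delta$-lc property one valuation at a time. Fix a prime divisor $E$ over $Z$ realised on $Z'$; its log discrepancy for $(Z, B_Z + M_Z)$ equals $1$ minus the coefficient of $E$ in $B_{Z'}$ plus the strict transform of the chosen $M_Z$. The $B_{Z'}$-coefficient is again of the form $1 - \gamma_E$ with $\gamma_E$ bounded below by a function of $\epsilon$, by inversion of adjunction applied to the induced fibration $f'\colon X' \to Z'$ and the $\epsilon$-lc condition on $(X,\Delta)$, while the $M_Z$-coefficient is bounded above by the previous step. Combining the two bounds yields a uniform lower bound $\delta = \delta(n, \epsilon, \D) > 0$ for the log discrepancy, as required.

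The main obstacle is precisely the control of the moduli part: producing an effective representative with bounded denominators and, simultaneously, ensuring that its contribution to the discrepancies along arbitrary exceptional divisors is uniformly small. This is exactly where the boundedness of the fibre family $\D$ and the Fano-type hypothesis are indispensable, and where one cannot avoid some form of effective semiampleness of $\mathbf{M}$; in its absence one must substitute Birkar's complement-theoretic construction for the naive "general member of a free system" argument used above.
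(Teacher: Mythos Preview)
This theorem is not proved in the paper. It is stated in the preliminaries (\S\ref{lc.trivial,fibrations}) as a citation of Birkar's result \cite[Thm.~1.4]{Bir16}, and the paper only \emph{uses} it (in the proof of Corollary~\ref{bounded.lcy.mfs.thm}) without supplying any argument of its own. There is therefore no ``paper's own proof'' to compare your proposal against.

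That said, your sketch is a reasonable high-level outline of how Birkar's original argument proceeds: separating the control of $B_Z$ (via the $\epsilon$-lc hypothesis and the lct description of its coefficients) from the control of an effective representative of $M_Z$ (via boundedness of the fibres). The point you flag as the main obstacle---producing an effective $M_Z$ with uniformly bounded singularities---is indeed the crux, and your description of it as requiring ``some form of effective semiampleness of $\mathbf{M}$'' or a complement-theoretic substitute is accurate in spirit. Be aware, though, that several steps you pass over quickly are genuinely delicate in Birkar's paper: the claim that $NM_{Z'}$ is base point free on a high enough model is not a formal consequence of boundedness of the fibres, and the passage from controlling coefficients on $Z'$ to bounding log discrepancies over $Z$ requires care with how the moduli b-divisor descends. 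If you intend this as a self-contained proof rather than a summary of \cite{Bir16}, those steps would need to be filled in.
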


\begin{remark}
\label{rmk.bab.sing.base}
Birkar's solution of the BAB conjecture,~\cite{bab16a, bir16bab}, implies that for a fixed positive integer $n$, and fixed positive real numbers $\epsilon$, $\eta$,  the set $\mathfrak{D}_{n, \epsilon, \eta}$ of pairs $(Z, \Gamma)$ such that
\begin{enumerate}
\item[(i)] $(Z,\Gamma)$ is $\epsilon$-lc of dimension $n$,
\item[(ii)] $K_Z+\Gamma \sim_{\rr} 0$, and
\item[(iii)] $Z$ is projective of Fano type and the coefficients of $\Gamma$ are $\geq \eta$
\end{enumerate}
is bounded. 
Hence, Theorem~\ref{birk.thm} holds for $\mathfrak{D}=\mathfrak{D}_{n, \epsilon, \eta}$.
The positive real number $\delta$ whose existence is claimed in the theorem will then depend on $n, \epsilon, \eta$.
\end{remark}

It is conjectured in~\cite{PS} that, after passing to a birational model $Z' \to Z$ of the base, a fixed power of the moduli part $M_{Z'}$, only depending on the coefficients of $\Delta$, is base point free. 
In the same paper, the authors prove the conjecture in the special case where the relative dimension of the lc-trivial fibration is one.

\begin{theorem}
\cite[Theorem~8.1]{PS}
\label{semiample}
Fix two positive integers $n$ and $r$ and a DCC set of rational coefficients $I \subset \mathbb{Q} \cap (0,1)$. \newline
Let $(X, \Delta)$ be a projective klt pair with coefficients in $I$ and assume that there exists an lc-trivial fibration $f\colon X \rightarrow Z$ with $n=\dim X =\dim Z +1$  and $r(K_X+\Delta)|_F \sim 0$, where $F$ is the generic fibre of $f$.\newline
Then there exist a positive integer $m=m(n,r, I)$ and a birational morphism $Z' \rightarrow Z$ with $|mM_{Z'}|$ base point free. 
\end{theorem}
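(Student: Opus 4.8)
The plan is to reduce the computation of the moduli part $M_{Z'}$ to a pullback from a moduli space of the fibres, where base point freeness of a bounded multiple is classical, and then to track all denominators to see that they depend only on $n, r, I$.

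First I would analyze the generic fibre $F$ of $f$. Since $\dim X = \dim Z + 1$, the fibre $F$ is a curve, and since $(X,\Delta)$ is klt with $K_X+\Delta \sim_{f,\rr} 0$, the restriction $(F,\Delta_F := \Delta|_F)$ is a one-dimensional klt pair with $K_F+\Delta_F \equiv 0$. Two cases occur. Either $F$ is an elliptic curve and $\Delta_F = 0$, because a nonzero effective divisor on a genus-one curve cannot be numerically trivial; or $F \cong \pp^1$ and $\Delta_F = \sum_i a_i p_i$ with $\sum_i a_i = 2$ and $a_i \in I \subset \qq \cap (0,1)$. In the latter case, since $I$ satisfies DCC it has a positive minimum $a_0 = \min I$, so the number $N$ of marked points satisfies $N a_0 \le \sum_i a_i = 2$, whence $N \le 2/a_0$ is bounded in terms of $I$ alone.

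Next I would pass to a convenient birational model. The moduli part $M_Z$ of Theorem~\ref{bundle} is, up to $\rr$-linear equivalence and birational pullback, the trace on $Z$ of the moduli $b$-divisor in the sense of Ambro and Kawamata, hence independent of the chosen model; so it suffices to produce one birational morphism $Z' \to Z$ together with a birational model $X' \dashrightarrow X$ carrying an lc-trivial fibration $X' \to Z'$ in ``standard form''. Concretely, after resolution, a relative minimal model program over $Z'$, and a base change that kills the finite monodromy followed by descent, the singular fibres become semistable and the period map of the family of fibres extends to a morphism on $Z'$. Then I would identify $M_{Z'}$ explicitly. In the elliptic case this is Kodaira's canonical bundle formula: the $j$-invariant gives a morphism $j \colon Z' \to \overline{M}_{1,1} \cong \pp^1$ with $12\, M_{Z'} \sim j^\ast \oo_{\pp^1}(1)$, base point free with $m = 12$. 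In the genus-zero case I would clear denominators (introducing a factor bounded by $r$) and pass to a cyclic cover of $X'$ branched along the marked sections, converting the family of weighted pointed rational curves into a family of curves of genus bounded in terms of $N$ and $r$; the moduli part is preserved up to a controlled rational multiple, so $M_{Z'}$ is again realized as $c_1$ of a Deligne-extended Hodge bundle whose sections define a moduli map to a moduli space of curves, a bounded multiple of whose polarization is base point free. As $N$ is bounded by $I$ and the cover degree by $r$, the resulting $m$ depends only on $n, r, I$.

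The hard part will be the Hodge-theoretic step: realizing $M_{Z'}$ as an honest pullback of a base-point-free divisor after a modification that is itself uniformly controlled. This requires semistable reduction for the relevant family of curves, control of the Deligne canonical extension of the weight-one Hodge bundle — which is the source both of the universal factor $12$ and of the semiampleness in the elliptic case — and an argument that the moduli map descends to $Z'$ after killing the finite monodromy. The denominators introduced by this descent, together with the bound on $N$ coming from DCC and the factor coming from $r$, are precisely what must be estimated uniformly in order to extract a single $m = m(n,r,I)$ that works for the whole family.
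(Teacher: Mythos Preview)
The paper does not prove this statement: Theorem~\ref{semiample} is quoted from \cite[Theorem~8.1]{PS} as a black box, and is invoked only later in the proof of Corollary~\ref{max.var.cor}. There is therefore no proof in the paper against which to compare your proposal.

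For what it is worth, your outline is broadly in the spirit of the Prokhorov--Shokurov argument: one bounds the combinatorial data of the generic fibre using the DCC condition on $I$ (and in fact finiteness, via a result of the type of Theorem~\ref{hmx_1.5.thm}), and then realises the moduli part, after a suitable birational modification of the base, as a pullback from a moduli space of one-dimensional fibres, where base-point-freeness of a fixed multiple of the Hodge bundle is classical. The elliptic case with $m=12$ via the $j$-map is indeed Kodaira's formula. Your sketch of the rational case is vaguer; the cyclic-cover step and the descent after killing finite monodromy would require more care, since the denominators so introduced must be shown to depend only on the bounded data $(n,r,I)$ --- but you have correctly identified this as the delicate point. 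Since the paper treats the theorem as an external input, a full verification would require consulting \cite{PS} directly rather than anything in the present text.
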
 

Both the boundary and moduli part reflect the geometry of the fibration and they can be used to deduce important information about the total space and the variation of the fibres. 
In general, we can use the moduli part to measure the variation of the fibres. 
More precisely, let $f\colon (X,\Delta)\rightarrow Z$ be an lc-trivial fibration. 
By~\cite[Theorem~3.3]{Amb05}, there exists a diagram 
\begin{equation}
\label{diag.ambro.1}
	\xymatrix{
		& (X,\Delta) \ar[d]^f & & (X^!,\Delta^!) \ar[d]_{f^!} \\
		& Z &  \bar{Z}\ar[l]^\tau \ar[r]^\rho  & Z^!	
	} 
\end{equation}
\noindent where $\tau$ is generically finite and surjective, $\rho$ is surjective, $f^!$ is an lc-trivial fibration such that there exists a non-empty open set $U\subseteq \bar{Z}$ over which
\begin{align}
\label{diag.ambro.2}
\xymatrix{
			& (X,\Delta)\times_Z \bar{Z}|_U \ar[rr]^\simeq \ar[dr] & & (X^!,\Delta^!)\times_{Z^!} \bar{Z}|_U \ar[dl] \\
			&  & U  & 	
		} 
\end{align}
where the top horizontal arrow is an isomorphism of pairs;
furthermore, the moduli part $M_{Z^!}$ of $f^!$ is big and nef and $\tau^* (M_Z)=\rho^*(M_{Z^!})$. 
The variation of $f$ is defined to be the dimension of $Z^!$ in~\eqref{diag.ambro.1}.

\begin{remark}
\label{rmk.max.var}
We say that the morphism $f$ has maximal variation if the map $\rho$ in~\eqref{diag.ambro.1} is quasi-finite, i.e., if $Z$, $Z^!$ have the same dimension. 
By the above discussion, if $f$ has maximal variation then $M_{\overline Z}$ is big and the same holds for $M_Z$, since $\tau_\ast M_{\overline{Z}}$ is a multiple of $M_Z$, by construction.
\end{remark}

\begin{definition}
\label{def.isotrivial}
Let $(X,\Delta)$ be a klt pair.
Let $f\colon (X, \Delta)\rightarrow Z$ be an lc-trivial fibration.
\begin{enumerate}
\item[(i)]   
We say that $f$ is generically isotrivial if there exists a non-empty open set $U \subset Z$ over which the pairs $(X_z, \Delta\vert_{X_z})$, $z \in U$, are all isomorphic and $Z^!$ is a point in \eqref{diag.ambro.1}.
\item[(ii)] 
We say that $f$ is strongly generically isotrivial if it is generically isotrivial and moreover we can take the map $\tau$ in the diagram in \eqref{diag.ambro.1} to be \'etale in codimension $1$.
\end{enumerate}
\end{definition}

Being generically isotrivial immediately implies that the moduli part is numerically trivial, in view of the diagram in \eqref{diag.ambro.2}.
On the other hand, the notion of strong generic isotriviality of an lc-trivial fibration is more rigid and can be characterized through the following result which is a simple extension of~\cite[Theorem~4.7]{Amb05}.

\begin{theorem}
\label{ambro}
Let $f \colon (X,\Delta)\rightarrow Z$ be an lc-trivial fibration.
Assume that $B_{Z}=0$ and $M_{Z}\sim_\mathbb{R} 0$. 
We denote by $F$ a general fiber of $f$. 
Then the morphism \mbox{$\tau\colon \overline{Z} \rightarrow Z$} in \eqref{diag.ambro.1} can be taken to be a finite Galois covering 
satisfying the following properties: 
\begin{enumerate}
\item[(i)]
$\tau$ is \'etale in codimension $1$, and
\item[(ii)]
there exists a non-empty open subset $U\subset \overline{Z}$ and an isomorphism 
\[
(X,\Delta)\times_Z \overline{Z}\vert_U \rightarrow (F,\Delta\vert_F) \times \overline{Z}\vert_U
\] 
over $U$.
\end{enumerate}
\end{theorem}

\begin{proof}
If $\Delta$ is a $\mathbb{Q}$-divisor and $M_Z \sim_\mathbb{Q} 0$, then the statement of the theorem is implied by~\cite[Theorem~4.7]{Amb05} and there is nothing prove.
\\
We then assume that $\Delta$ is an $\mathbb{R}$-divisor.
Following the proof of~\cite[Theorem~3.1]{MR2944479}, it is possible to find effective $\mathbb{Q}$-divisors $\Delta_1, \dots, \Delta_k$ such that for any $i$, $(X, \Delta_i)$ is klt over the generic point of $Z$, and there exists $0<r_i<1$, $r_i \in \mathbb{R}$ such that 
\begin{align}
\label{eqn.decomp.q-div}
K_X+\Delta = \sum_{i=1}^k r_i (K_X+\Delta_i), \quad \text{and} \quad \sum_{i=1}^k r_i=1.
\end{align}
Moreover, the divisors $\Delta_i$ can be chosen so that for any $i$, $f \colon (X, \Delta_i) \to Z$ is an lc trivial fibration.
\\
{\bf Claim}. For all $i=1, 2, \dots, k$, $K_X+\Delta_i \sim_\mathbb{Q} f^\ast K_Z$.
\begin{proof}
By Theorem~\ref{bundle}, for all $i=1, 2, \dots, k$, there exist an effective $\mathbb Q$-divisor $B_{Z, i}$ and a pseudoeffective $\mathbb Q$-divisor $M_{Z, i}$ such that  
\begin{align}
\label{cbf.interp.eqn}
K_X+\Delta & \sim_\mathbb{Q} f^\ast (K_Z+B_{Z, i}+M_{Z, i}).
\end{align}
By~\eqref{eqn.decomp.q-div} and~\eqref{cbf.interp.eqn}, then 
\[
f^\ast K_Z \sim_\mathbb{R} 
K_X+\Delta = 
\sum_{i=1}^k r_i (K_X+\Delta_i) \sim_\mathbb{Q} 
f^\ast (K_Z+\sum_{i=1}^k r_i B_{Z, i}+\sum_{i=1}^k r_iM_{Z, i}).
\]
As $\sum_{i=1}^k r_i B_{Z, i}$ is effective and $\sum_{i=1}^k r_iM_{Z, i}$ is pseudoeffective, this implies that for all $i$, $M_{Z, i} \equiv 0 = B_{Z, i}$.
Finally, \cite[Theorem~1.3]{Flo} implies that $M_{Z, i} \sim_\mathbb{Q} 0$, which concludes the proof of the claim.
\end{proof}
Applying ~\cite[Theorem~4.7]{Amb05} to $f \colon (X, \Delta_i) \to Z$, it follows that for any $i$ there exists a finite Galois covering  $\tau_i \colon \overline{Z}_i \to Z$ satisfying the following properties: 
\begin{enumerate}
\item[(i)]
$\tau_i$ is \'etale in codimension $1$, and
\item[(ii)]
there exists a non-empty open subset $U_i\subset \overline{Z}_i$ and an isomorphism 
\[
(X,\Delta_i)\times_Z \overline{Z}_i \vert_{U_i} \rightarrow (F,\Delta_i\vert_F) \times \overline{Z}_i\vert_{U_i}
\] 
over $U_i$.
\end{enumerate}
By passing to the simultaneous Galois closure of the composition of the Galois field extensions induced by the morphisms $\tau_i$, there exists a finite Galois covering  $\tau \colon \overline{Z} \to Z$ such that 
\begin{enumerate}
\item
$\tau$ is \'etale in codimension $1$,
\item 
for any $i$, $\tau$ factors through $\tau_i$, and
\item
there exists a non-empty open subset $U\subset \overline{Z}$ and isomorphisms 
\[
(X,\Delta_i)\times_Z \overline{Z} \vert_{U} \rightarrow (F,\Delta_i\vert_F) \times \overline{Z}\vert_{U}
\] 
over $U$.
\end{enumerate}
By the diagram in property (3) above and by~\eqref{eqn.decomp.q-div}, then it follows that over $U$ the morphism
\[
(X,\Delta)\times_Z \overline{Z} \vert_{U} \rightarrow (F,\Delta\vert_F) \times \overline{Z}\vert_{U},
\] 
is an isomorphism, which concludes the proof.
\end{proof}

\begin{remark}
\label{rmk.bir.isotr}
\begin{enumerate}
\item 
Theorem~\ref{ambro} implies that under the condition of the theorem then $f\colon (X, \Delta)\rightarrow Z$ is a strongly birationally isotrivial lc-trivial fibration, cf. Definition~\ref{def.isotrivial}.
In fact, property (ii) in the statement of Theorem~\ref{ambro} implies that in the diagram~\eqref{diag.ambro.1} $Z^!$ can be taken to be a point, as it suffices to define $(X^!, \Delta^!):=(F, \Delta\vert_F)$.
\item 
It follows from~\eqref{def.bound.part.eqn}-\eqref{diag.ambro.2} that the viceversa of the statement of Theorem~\ref{ambro} holds as well: namely, given a pair $(X, \Delta)$ and an lc-trivial fibration $f \colon (X, \Delta) \to Z$, for which the conditions (i)-(ii) of Theorem~\ref{ambro} hold, then $B_Z=0$, by the definition in~\eqref{def.bound.part.eqn}, and $M_Z\sim_\mathbb{R} 0$, by~\eqref{diag.ambro.2} and condition (ii).
\end{enumerate}
\end{remark}

Theorem~\ref{ambro} motivates the following definition.

\begin{definition}
\label{defproduct}
Let $X$ be a normal variety.
\begin{enumerate}
\item[(i)] 
Let $(X,\Delta)$ be a klt pair.
We say that $(X,\Delta)$ is of product-type if there exists a birational contraction $\pi\colon X \dashrightarrow X'$ such that $(X', \pi_* \Delta)$ admits a strongly generically isotrivial lc-trivial fibration.
\item[(ii)]
We say that $X$ is of product-type if there exists an effective divisor $\Delta$ such that $(X, \Delta)$ is a klt pair which is of product-type.
\end{enumerate}
\end{definition}

 By Theorem~\ref{ambro}, if $(X,\Delta)$ is of product-type then $X$ is birational to a quotient of a product. 

\subsection{Birational transformations of fibered Calabi--Yau manifolds}\label{sect.trans.fib.cy}

In \S \ref{sec.proofs}, we will be interested in modifying the base $X$ of an elliptic fibration $f \colon Y \to X$ by means of a birational contraction $X \dashrightarrow X'$ and show that $X'$ is also the base of a terminal elliptic variety with numerically trivial canonical class.
To this end, we prove the following result.

\begin{proposition}
\label{small.qfact.prop}
Let $Y$ be a projective terminal variety with $K_Y \equiv 0$. 
Let $f \colon Y \to X$ be a contraction of terminal varieties.
Let $s \colon X' \to X$ be a small morphism.
Then there exists a $\mathbb{Q}$-factorial terminal Calabi--Yau variety $Y'$ and a surjective morphism $f' \colon Y' \to X'$.
 In particular $Y'$ is isomorphic to $Y$ in codimension one.
\end{proposition}

\begin{proof}
We denote by $T$ (respectively, $T'$) the exceptional locus of $s^{-1}$ (resp. $s$) on $X$ (resp. $X'$), so that $X \setminus T$ is isomorphic to $X' \setminus T'$.\\

{\bf Claim}. 
{\it
There exists a terminal $\mathbb{Q}$-factorial variety $\overline Y$, a birational contraction $p \colon \overline{Y} \to Y$, and a contraction $\overline{f} \colon \overline{Y} \to X'$ for which the following diagram commutes
 \begin{eqnarray}\label{res.indet}
  \xymatrix{
  Y \ar[d]_{f} & \overline{Y} \ar[l]_{p} \ar[d]^{\overline f} & \\
  X & X'. \ar[l]_{s}
  }
 \end{eqnarray}
Moreover, $\overline{f}(\overline E) \subset T'$, where $\overline E$ is the divisorial part of the exceptional locus of $p$}.
\begin{proof}[Proof of the Claim]
Let $\Gamma \subset Y \times X'$ be normalization of the main component of the Zariski closure  of the graph of the morphism $s^{-1} \circ f \colon Y \dashrightarrow X'$.
We denote by
\[
\xymatrix{
& \Gamma \ar[dr]^{p_2} \ar[dl]_{p_1} &\\
Y & & X'
}
\]
the restrictions of the projections to the two factors of $Y \times X'$.
By construction, $p_1$ is birational and it is an isomorphism over the Zariski open set $X\setminus T \subset X$.
\\
Let $\widetilde Y$ be a resolution of $\Gamma$.
Then we can run the $K_{\widetilde Y}$-MMP relatively over $\Gamma$ with scaling of an ample divisor which terminates with a relatively minimal model $\pi \colon \overline{Y} \to \Gamma$ and $\overline{Y}$ is a terminal $\mathbb{Q}$-factorial projective variety.
We denote by $p:=p_1 \circ \pi$, $\overline f := p_2 \circ \pi$.
As $Y$ is terminal and $p_1$ is an isomorphism over $X \setminus T$, then defining $U := f^{-1}(X \setminus T)$, it follows that $p\vert_{p^{-1}(U)} \colon p^{-1}(U) \to U$ is a small $\mathbb{Q}$-factorial model, by minimality of $\overline{Y}$ over $\Gamma$.
Hence, the center on $Y$ of all $p$-exceptional divisors is contained in $Y \setminus U=f^{-1}(T)$, which then implies the second part of the statement.
\end{proof} 
\noindent
Since $Y$ is terminal, it follows that 
\begin{align}
\label{eqn:terminal.mod}
K_{\overline{Y}}= p^\ast K_Y + E \sim_\mathbb{Q} E,
\end{align} 
where $E$ is an effective divisor whose support coincides with $\overline E$.
By the claim, $E$ is $f'$-exceptional in the sense \cite[Definition 2.9]{Lai}, since $\overline{f}(E)$ has codimension $\geq 2$ in $X'$, as $\overline{f}(E) \subset T'$.
By construction, the general fibre $F'$ of $f'$ has a good minimal model, by~\eqref{eqn:terminal.mod} and since $K_F \sim_\mathbb{Q}0$ on a general fibre $F$ of $f$ -- and $F$ is terminal by adjunction.
Hence, by \cite[Theorem 1.1]{HX}, it follows that there is a good minimal model for $K_{\overline{Y}}$ over $X'$.
Then,~\cite[Corollary 2.9]{HX} implies that any run of the relative $K_{\overline{Y}}$-MMP with scaling of an ample divisor terminates. 
Running one such MMP, we can assume that there exists a minimal model $Y' \to X'$. 
By \cite[Lemma 2.10]{Lai}, as $E$ is $f'$-exceptional, it follows that the relative $K_{\overline{Y}}$-MMP over $X'$ contracts $E$ and hence it terminates with a variety $Y'$ such that $K_{Y'} \equiv 0$ and $Y'$ is terminal.
\\
The final claim in the statement of the proposition follows from the fact that we have contracted all and only the divisors in the support of $E$ whose support coincided with the exceptional locus of $p$.
\end{proof}

A similar result holds if we modify the base of an elliptic fibration by means of a birational contraction.
\begin{proposition}
\label{bir.contr.cy.prop}
Let $Y$ be a projective terminal variety with $K_Y \equiv 0$. 
Let $f \colon Y \to X$ be a contraction of normal varieties. 
Assume that $X$ is a $\mathbb{Q}$-factorial variety and let $t\colon X \dashrightarrow X'$ be a birational contraction to a normal projective variety $X'$.
Then there exists a $\mathbb{Q}$-factorial terminal Calabi--Yau variety $Y'$ and a surjective morphism $f' \colon Y' \to X'$.
In particular $Y'$ is isomorphic to $Y$ in codimension one.
\end{proposition}

\begin{proof}
Let $H'$ be an ample Cartier divisor on $X'$, so that $X'={\rm Proj}\; R(X', H')$, where $R(X', H')$ is the ring of section of $H'$.
Let $H$ be the pullback of $H'$ on $X$\footnote{As $X$ is projective, then the map $t$ is defined over a big open set $U \subset X$ over which the pullback $H_U$ of $H'$ is defined; then, as the complement of $U$ in $X$ has codimension at least $2$, it suffices to take the $H$ to be the Zariski closure of $H_U$.}.
As $t$ is a birational contraction, then $R(X', H') = R(X, H)$.
As $f$ has connected fibers, then $R(X', H') = R(X, H)= R(Y, f^\ast H)$.
\\
If we consider the pair $(Y, \epsilon f^\ast H)$, $0<\epsilon\ll 1$, then $K_Y+f^\ast H$ is abundant since $H$ is big on $X$ and $K_Y \sim_
\mathbb{Q} 0$. 
By~\cite[Theorem 4.3]{GL}, $(Y, \epsilon f^\ast H)$ has a good minimal model, and~\cite[Corollary 2.9]{HX} implies that any run of the $(K_Y+\epsilon f^\ast H)$-MMP with scaling of an ample divisor terminates. 
Hence, running one such MMP we can assume that there exists a minimal model $Y\dashrightarrow Y''$ for $(Y, \epsilon f^\ast H)$.
Denoting by $G''$ the strict transform of $f^\ast H$ on $Y''$, $G''$ is nef on $Y''$, and moreover, by~\cite[Proposition~3.13, Corollary~3.14]{GL}, it is also abundant.
Hence, as $K_Y \sim_
\mathbb{Q} 0$,~\cite[Theorem~1.1]{Fuj11} implies that $G''$ is semiample on $Y''$.
As by construction $R(Y'', G'')= R(Y, f^\ast H) = R(X', H')$, there exists a morphism $Y'' \to X'$.
Passing to a terminalization $Y' \to Y''$ of $(Y'', 0)$ yields the desired model.\\
The final claim follows from \cite[Corollary~3.54]{KM}, as $Y'$ is also terminal projective with $K_{Y'} \equiv 0$ since in going from $Y''$ to $Y'$ we only extracted divisors of discrepancy $0$ with respect to $K_{Y''}$.
\end{proof}

We terminate this section by proving a result that illustrates a criterion to determine when a projective terminal variety with trivial canonical class is of product type.

\begin{lemma}
\label{lem:cy.not.prod.type}
Let $Y$ be a terminal projective variety with $K_Y \equiv 0$.
Let $f \colon Y \to Z$ be a contraction of normal projective varieties.
\begin{enumerate}
\item 
If $0< \dim Z < \dim Y$ and $K_Z \equiv 0$, then $Y$ is of product type.
\item 
If $Z$ is of product type, then $Y$ is also of product type.
\end{enumerate}
\end{lemma}

\begin{proof}
\begin{enumerate}
\item 
As $K_Y\equiv  0$, the canonical bundle formula  $K_Y \sim_\mathbb{Q} 0$.
Hence, by the canonical bundle formula, Theorem~\ref{bundle}, $B_Z=0$ and $M_Z\sim_\mathbb{Q} 0$. 
But this implies that $f$ is strongly generically isotrivial by Theorem~\ref{ambro}, see Remark~\ref{rmk.bir.isotr}, and $Y$ is of product type, by Definition~\ref{defproduct}.

\item
As $Z$ is of product type, then there exists a boundary $\Delta$ such that $(Z, \Delta)$ is of product type.
Hence, there exists a birational contraction $Z \dashrightarrow Z'$ and a contraction $Z' \to W'$ which is strongly generically isotrivial, by Definition~\ref{defproduct}.
In particular, $K_{W'} \sim_\mathbb{Q} 0$.
\\
By Proposition~\ref{small.qfact.prop}, we can assume that $Z$ is $\mathbb{Q}$-factorial. 
Hence, by Proposition~\ref{bir.contr.cy.prop} there exists a birational terminal projective variety $Y'$ with $K_{Y'}\equiv 0$, and a contraction $f \colon Y' \to W'$.
But then part (1) implies that $Y'$ is product type and the same holds for $Y$ since it is isomorphic in codimension one to $Y'$.
\end{enumerate}
\end{proof}


\section{A structure theorem for Calabi-Yau pairs}
\label{CY.pairs.sect}

We would like to characterize Calabi-Yau pairs which behave like a product of lower dimensional varieties. 
In this section we prove a structure theorem for Calabi-Yau pairs which is inspired by the following theorem 
of Koll\'ar-Larsen.

\begin{theorem}\cite[Theorem 3]{KL}
	Let $X$ be a smooth, simply connected projective Calabi-Yau variety and 
	$f \colon X\dashrightarrow Y$ a dominant map such that $Y$ is not uniruled. Then one can write 
	\[
		\xymatrix{
			X \ar[r]^{\pi} & X_1 \ar@{-->}[r]^{g} & Y
		},
	\]
	where $\pi$ is a projection to a direct factor of $X\cong X_1 \times X_2$ and $g\colon X_1 \dashrightarrow Y$ is generically finite.
\end{theorem}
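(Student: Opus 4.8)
The plan is to translate the rational map $f$ into a holomorphic foliation on $X$ and then exploit the polystability of the tangent bundle of a Calabi--Yau variety to split it off as a genuine direct factor. First I would note that, since $X$ is smooth and $Y$ is projective, the indeterminacy locus of $f$ has codimension at least two; thus on the open set $U\subseteq X$ where $f$ is a morphism the differential $df\colon T_X|_U \to f^\ast T_Y|_U$ is defined, and I let $\mathcal F\subset T_X$ be the saturation of $\ker(df)$, extended as a reflexive subsheaf across the codimension-two locus. By construction $\mathcal F$ is an algebraically integrable foliation whose general leaf is the fibre of $f$, and on $U$ one has the exact sequence $0\to\mathcal F\to T_X\to f^\ast T_Y\to 0$, so that the normal sheaf satisfies $N_{\mathcal F}\cong f^\ast T_Y$ in codimension one. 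Computing first Chern classes and using $c_1(T_X)=-K_X\equiv 0$ then gives $c_1(\mathcal F)\equiv f^\ast K_Y$.

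The positivity inputs come next. Since $Y$ is not uniruled, by the theorem of Boucksom--Demailly--P\u{a}un--Peternell its canonical class $K_Y$ is pseudo-effective, hence so is $f^\ast K_Y$; in particular $c_1(\mathcal F)\cdot H^{n-1}\ge 0$ for every ample class $H$, where $n=\dim X$. On the other hand, $X$ carries a Ricci-flat K\"ahler metric by Yau's theorem, and the induced Hermitian--Einstein metric on $T_X$ makes $T_X$ polystable of slope zero with respect to every polarization. Semistability forces $\mu_H(\mathcal F)\le\mu_H(T_X)=0$, and combined with $c_1(\mathcal F)\cdot H^{n-1}\ge 0$ this yields $\mu_H(\mathcal F)=0$ and $f^\ast K_Y\cdot H^{n-1}=0$ for all ample $H$. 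As $f^\ast K_Y$ is pseudo-effective, this gives $f^\ast K_Y\equiv 0$, whence $c_1(\mathcal F)\equiv 0$: the foliation has the same (zero) slope as the ambient polystable sheaf.

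With $\mathcal F$ a saturated subsheaf of maximal slope inside the polystable sheaf $T_X$, a standard argument shows that $\mathcal F$ is in fact a direct summand, so that $T_X\cong\mathcal F\oplus N_{\mathcal F}$ as holomorphic (hence, on the smooth $X$, locally free) sheaves, with both summands integrable and orthogonal for the Ricci-flat metric. The two distributions are then parallel for the Levi--Civita connection, so the de Rham decomposition theorem splits the universal cover as a Riemannian, hence complex, product; since $X$ is simply connected this descends to a global isomorphism $X\cong X_1\times X_2$ in which the leaves of $\mathcal F$ are the fibres $\{x_1\}\times X_2$. The projection $\pi\colon X\to X_1$ contracts these leaves, so $f$ factors as $X\xrightarrow{\pi}X_1\dashrightarrow Y$; and since $\dim X_1=\mathrm{rk}\,N_{\mathcal F}=\dim Y$, the induced dominant map $g\colon X_1\dashrightarrow Y$ is generically finite, exactly as claimed. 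This is the same splitting mechanism underlying the Beauville--Bogomolov decomposition \cite{MR730926}.

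The main obstacle I anticipate is analytic and birational bookkeeping rather than a conceptual gap: one must make the foliation and the tangent--normal sequence rigorous across the indeterminacy and non-submersion loci, so that the Chern-class and slope computations are legitimate -- this is why I work with reflexive, saturated sheaves, whose $c_1$ is detected in codimension one. The more delicate point is upgrading the algebraic splitting $T_X\cong\mathcal F\oplus N_{\mathcal F}$ to an honest product structure on $X$: one needs both summands to be genuinely integrable subbundles and the splitting to be parallel, which requires invoking the uniqueness of the Hermitian--Einstein metric to identify the polystable splitting with the metric-orthogonal, holonomy-invariant one before applying de Rham. Controlling the potential singularities of $\mathcal F$ -- ensuring the general leaf is compact and that the leaf space is $Y$ up to a generically finite map -- is where most of the technical care will be required.
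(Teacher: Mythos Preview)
The paper does not contain a proof of this statement: it is quoted verbatim as \cite[Theorem 3]{KL} at the beginning of \S\ref{CY.pairs.sect} purely as motivation for the structure theorem (Theorem \ref{structure}) that follows, and no argument is given or sketched. So there is nothing in the present paper to compare your proposal against.

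For what it is worth, your outline is a coherent differential-geometric strategy --- foliation by fibres, BDPP to get $K_Y$ pseudo-effective, polystability of $T_X$ from the Ricci-flat metric, then a de Rham / Beauville--Bogomolov splitting --- and it is in the spirit of how such product decompositions are often obtained. Two points to watch if you pursue it: first, since $f$ is only rational, ``$f^\ast K_Y$'' has to be interpreted on a resolution of indeterminacy and then pushed back, and you must check that the exceptional contributions do not spoil the pseudo-effectivity or the slope computation; second, a saturated subsheaf of a polystable sheaf with equal slope is not automatically a direct summand unless it is itself a subbundle (or one invokes the Hermitian--Einstein structure to force parallelism), so the passage from ``same slope'' to ``holomorphic direct summand'' is exactly where the real work lies, as you yourself flag. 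Koll\'ar and Larsen's original argument in \cite{KL} is somewhat different in flavour, but that comparison is outside the scope of the present paper.
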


Any run of a $K_X$-MMP with scaling of an ample divisor on a Calabi-Yau pair $(X, \Delta)$, with $\Delta > 0$, terminates with a Mori fibre space structure.
If the base of this fibration is a variety with trivial canonical class then the pair $(X, \Delta)$ is of product-type  by Definition~\ref{defproduct}. 
If that is not the case, we can repeat the same procedure on the base of the Mori fibre space.

By iterating this procedure, we can obtain the following description of Calabi-Yau pairs which is 
the main result of this section.

\begin{theorem}
\label{structure}
Let $(X, \Delta)$ be a projective klt Calabi-Yau pair with $\Delta \neq 0$. 
Then there exists a birational contraction 
\[
		\pi \colon X \dashrightarrow X'
\]
to a projective Calabi-Yau pair $(X', \Delta' =\pi_\ast \Delta)$, $\Delta'\neq 0$ and a tower of morphisms
\begin{align}\label{mfs.tower.diag}
\xymatrix{
	X'=Y_0 \ar[r]^{p_0} & Y_1 \ar[r]^{p_1} & Y_2 \ar[r]^{p_2} &\dots \ar[r]^{p_{k-1}}& Y_{k}
}
\end{align}		
\noindent such that 
\begin{enumerate}
\item[(i)] 
for any $i$, $Y_i$ is $\mathbb{Q}$-factorial,
\item[(ii)] 
for any $1 \leq i < k$, there exists a boundary $\Delta_i \neq 0$ on $Y_i$ such that $(Y_i, \Delta_i)$ is a projective klt Calabi-Yau pair,
\item[(iii)] 
for any $0 \leq i \leq k$ the morphism $p_i\colon Y_i \to Y_{i+1}$ is a $K_{Y_i}$-Mori fibre space, in particular $\rho(Y_i/Y_{i+1})=1$, and
\item[(iv)]  
$Y_k$ is a projective klt Calabi-Yau variety, i.e., $K_{Y_k} \equiv 0$ and $Y_k$ has klt singularities.
\end{enumerate}		
\noindent	
If $\dim Y_k >0$, then $(X, \Delta)$ is of product-type.\\
Moreover, if $X$ is $\mathbb{Q}$-factorial, then the birational contraction $X \dashrightarrow X'$ is a composition of divisorial contractions and $(K_X+\Delta)$-flops.
\end{theorem}

\begin{proof}
We prove the statement of the theorem by induction on the dimension of $X$, which we denote by $n$.\\
When $n=1$, then $X=\mathbb{P}^1$ and it suffices to consider the constant morphism to a point.
Let us now assume that $n>1$ and that the theorem holds for any pair of dimension $<n$ satisfying the hypotheses of the statement.
\\
By~\cite[Corollary~1.37]{Kol13}, we can replace $(X,\Delta)$ with a small $\qq$-factorial model without losing the klt Calabi-Yau condition; it suffices to prove the result for this new pair, as it is isomorphic in codimension one to $(X, \Delta)$.
Abusing notation, we denote by $(X, \Delta)$ this new model, so that, we can assume that the pair $(X, \Delta)$ is $\mathbb{Q}$-factorial.
Since $\Delta \neq 0$, $K_{X}$ is not pseudo-effective and, by Theorem \ref{mmp}, we can run a $K_X$-MMP with scaling of an ample divisor
\begin{align}
\label{diag.3.2}
		\xymatrix{
			X \ar@{-->}[r]^{\pi} & \overline{X} \ar[r]^{\overline{m}} & Y,
		}
\end{align}		
which terminates with a $K_{\overline{X}}$-Mori fibre space $\overline{m} \colon \overline{X} \to Y$.
\newline
Let us note that since $X$ is $\mathbb{Q}$-factorial, then the same holds for $\overline{X}$ and $Y$ as they are outcomes of a run of the MMP.
Furthermore, $\overline{\Delta}=\pi_\ast \Delta \neq 0$ because we are running a $K_X$-MMP and $K_X +\Delta \sim_\mathbb{R} 0$.
Hence, $K_{\overline{X}}+\overline{\Delta} \sim_\mathbb{R} 0$ and $(\overline{X}, \overline{\Delta})$ is klt, and Theorem~\ref{bundle} implies that there exist an effective divisor $\Gamma$ on $Y$ such that $(Y, \Gamma)$ is klt and  $K_{Y} +\Gamma \sim_\mathbb{R} 0$.
\newline
If $\Gamma =0$ then $K_{Y} \equiv 0$ and we are done, by taking $Y_0=\overline{X}$, $Y_1=Y$. 	
Otherwise, $\Gamma \neq 0$ and by inductive hypothesis there exists a diagram
\begin{align*}
\xymatrix{
Y \ar@{-->}[r]^{q_{0}} & Y_{1} \ar[r]^{p_1} & Y_2 \ar[r]^{p_2} &\dots \ar[r]^{p_{k-1}}& Y_{k}
},
\end{align*}
such that $q_0$ is a birational contraction and the tower of morphisms 
\begin{align*}
\xymatrix{
Y_{1} \ar[r]^{p_1} & Y_2 \ar[r]^{p_2} &\dots \ar[r]^{p_{k-1}}& Y_{k}
}
\end{align*} 
satisfies conditions $(i)-(iv)$ in the statement of the theorem.
\\
The statement of the theorem then follows if we show that there exist  a $\mathbb{Q}$-factorial klt Calabi--Yau pair $(X', \Delta')$ with $\Delta' \neq 0$, a birational contraction $\pi' \colon \overline{X} \dashrightarrow X'$ which is a composition of divisorial contractions and $(K_{\overline{X}}+\overline{\Delta})$-flops, and a $K_{X'}$-Mori fiber space $m' \colon X' \to Y_1$ such that the following diagram commutes
\begin{align}
\label{long.diag}
\xymatrix{
X \ar@{-->}[r]^{\pi} & \overline{X} \ar[d]^{m'} \ar@{-->}[r]^{\pi'} & X'=:Y_0 \ar[d]^{m' =:p_0}\\ 
& Y \ar@{-->}[r]^{q_{0}} & Y_{1} \ar[d]^{p_1} \\
& & Y_2 \ar[d]^{p_2} \\
& & \vdots \ar[d]^{p_{k-1}}\\
& & Y_{k}.
}
\end{align}
By inductive hypothesis, as $Y$ is $\mathbb{Q}$-factorial, the birational contraction $q_0$ can be decomposed as a sequence of divisorial contractions and $(K_Y+\Gamma)$-flops
\begin{align*}
\xymatrix{
Y = Y^{(0)}\ar@/_1pc/@{-->}[rrrrr]_{q_{0}} \ar@{-->}[r]^{s_{1}} & Y^{(1)} \ar@{-->}[r]^{s_{2}} & Y^{(2)} \ar@{-->}[r]^{s_{3}} & \dots \ar@{-->}[r]^{s_{j-1}} & Y^{(j-1)} \ar@{-->}[r]^{s_{j}} & Y^{(j)}=Y_{1} \\
}.
\end{align*}
Thus, it suffices to prove the following claim.
We will use the following notation $(X^{(0)}, \Delta^{(0)}):=(\overline{X}, \overline{\Delta})$.\\

{\bf Claim.}
{\it 
For any $0\leq t \leq j$, there exists a $\mathbb{Q}$-factorial klt Calabi--Yau pair $(X^{(t)}, \Delta^{(t)})$, with $\Delta^{(t)} \neq 0$, and a $K_{X^{(t)}}$-Mori fibre space $m'_t \colon X^{(t)} \to Y^{(t)}$ such that the following diagram commutes
\begin{align*}
\xymatrix{
(X^{(0)}, \Delta^{(0)})\ar@/^2pc/@{-->}[rrr]^{r_j \circ r_{j-1}\circ \dots \circ r_2 \circ r_1=:\pi'} \ar@{-->}[r]^{r_{1}} \ar[d]^{m'=:m'_0}& 
(X^{(1)}, \Delta^{(1)}) \ar@{-->}[r]^{r_{2}} \ar[d]^{m'_1}& 
\dots \ar@{-->}[r]^{r_{j}} & 
(X^{(j)}, \Delta^{(j)})=Y_{0} \ar[d]^{m'_{j}=:p_0} \\
Y = Y^{(0)}\ar@/_1pc/@{-->}[rrr]_{q_{0}} \ar@{-->}[r]^{s_{1}} & 
Y^{(1)} \ar@{-->}[r]^{s_{2}} & 
\dots \ar@{-->}[r]^{s_{j}} & 
Y^{(j)}=Y_{1} \\
},
\end{align*}
$\Delta^{(t)}:=r_{t\ast} \Delta^{(t-1)}$, and $r_t$ is a birational contraction that is a composition of divisorial contractions and $(K_{X^{(t-1)}}+\Delta^{(t-1)})$-flops.
}
\begin{proof}[Proof of the Claim]
We use induction on $t \in \{0, 1, \dots, j\}$.\\
For $t=0$, there is nothing to prove.
Thus, we can assume that the statement of the claim holds up to $t-1$ and we shall prove that it holds for $t$ as well.
By inductive hypothesis, we can assume that the following diagram
\begin{align*}
\xymatrix{
X^{(0)}, \Delta^{(0)}) \ar@{-->}[r]^{r_{1}} \ar[d]^{m'_0}& 
(X^{(1)}, \Delta^{(1)}) \ar@{-->}[r]^{r_{2}} \ar[d]^{m'_1}& 
\dots \ar@{-->}[r]^{r_{t-1}} & 
(X^{(t-1)}, \Delta^{(t-1)}) \ar[d]^{m'_{t-1}}\\
Y = Y^{(0)} \ar@{-->}[r]^{s_{1}} & 
Y^{(1)} \ar@{-->}[r]^{s_{2}} & 
\dots \ar@{-->}[r]^{s_{t-1}} & 
Y^{(t-1)}
}
\end{align*}
exists and satisfies the conditions in the statement of the claim.
\\
Hence it suffices to show that the diagram 
\begin{align*}
\xymatrix{
(X^{(t-1)}, \Delta^{(t-1)}) \ar[d]^{m'_{t-1}} &
\\
Y = Y^{(t-1)} \ar@{-->}[r]^{s_{t}} & 
Y^{(t)}
}
\end{align*}
can be completed to a diagram 
\begin{align}
\label{short.diag}
\xymatrix{
(X^{(t-1)}, \Delta^{(t-1)}) \ar[d]^{m'_{t-1}} \ar@{-->}[r]^{r_t} &
(X^{(t)}, \Delta^{(t)}) \ar[d]^{m'_{t}}
\\
Y = Y^{(t-1)} \ar@{-->}[r]^{s_{t}} & 
Y^{(t)}
},
\end{align}
where $r_t$ is a composition of divisorial contractions and $(K_{X^{(t-1)}}+\Delta^{(t-1)})$-flops and $m'_t$ is a $K_{X^{(t)}}$-Mori fibre space.
Since $s_t$ is either a divisorial contraction or a $(K_{Y^{(t-1)}} + \Gamma^{(t-1)})$-flop by inductive hypothesis, where $\Gamma^{(t-1)}$  is the strict transform of $\Gamma$ on $Y^{(t-1)}$, then the existence of the diagram in \eqref{short.diag} follows from Lemma \ref{contraction.step} and \ref{flip.step}.
\end{proof}
\noindent
If $\dim Y_k >0$ then Definition \ref{defproduct} immediately implies that $(X, \Delta)$ is of product type: in fact, as $K_{Y^k} \sim_\mathbb{Q} 0$, then the lc-fibration $(X', \Delta') \to Y^k$ that we have constructed is strongly birationally isotrivial by Theorem~\ref{ambro}, see Remark~\ref{rmk.bir.isotr}, and $\pi' \circ \pi$ is a birational contraction.
\\
If $X$ is $\mathbb{Q}$-factorial, then the birational contraction $\pi' \circ \pi \colon X \dashrightarrow X'$ is a composition of divisorial contraction and $(K_X+\Delta)$-flops. 
The map $\pi \colon X \dashrightarrow \overline{X}$ in \eqref{diag.3.2} is constructed by running a $K_X$-MMP, hence it is a composition of divisorial contractions and $K_X$-flips; as $K_X+\Delta \sim_\mathbb{R} 0$, those $K_X$-flips are also $(K_X+\Delta)$-flops.
On the other hand, $\pi' \colon \overline{X} \to Y_0$ in \eqref{long.diag} is in turn a composition of divisorial contractions and $(K_{\overline{X}}+\overline{\Delta})$-flops, as shown in the proof of the Claim.
As $\pi$ is crepant with respect to $(X, \Delta)$, then those $(K_{\overline{X}}+\overline{\Delta})$-flops are also $(K_{X}+\Delta)$-flops.
\end{proof}

In the previous proof we used the following two lemmas.

\begin{lemma}
\label{contraction.step}
Let $(X,\Delta)$ be a projective klt Calabi-Yau pair with $\Delta\neq 0$. 
Let $p \colon X\rightarrow Y$ be a $K_X$-Mori fibre space of $\mathbb{Q}$-factorial varieties. 
Let $q \colon Y \rightarrow Y'$ be a divisorial contraction. 
Then there exist a $\mathbb{Q}$-factorial projective klt Calabi-Yau pair $(X',\Delta')$, with $\Delta' \neq 0$, a $K_{X'}$-Mori fibre space $p' \colon X'\rightarrow Y'$, and a birational contraction $f\colon X\dashrightarrow X'$ which is a composition of a finite number of $(K_X+\Delta)$-flops followed by a divisorial contraction, such that the following diagram commutes
\begin{displaymath}
	\xymatrix{
		& X \ar@{-->}[r]^{f} \ar[d]_{p} &  X' \ar[d]^{p'}\\
		& Y \ar[r]^{q}  &  Y'	.
	}
\end{displaymath}
\end{lemma}

\begin{proof}
Since $p$ is a $K_X$-Mori fibre space, we have that $\dim \NS(X/Y')=2$. 
Let $F$ be the irreducible divisor contracted by $q$ on $Y$ and let $E=p^\ast(F)$.
As for $0<\epsilon\ll 1$ $(X, \Delta+\epsilon E)$ is klt, $K_X+\Delta + \epsilon E\sim_\mathbb{R} \epsilon E$, and $\Delta$ is relatively big, we can run the $E$-MMP with scaling relatively over $Y'$ and this must terminate.
But then, as $E$ is effective and $q(p(E))$ has codimension $\geq 2$ in $Y'$, \cite[Lemma 2.10]{Lai} implies that this run of the relative $E$-MMP must terminate by contracting $E$ to a $\mathbb{Q}$-factorial model $X'$ of $X$ and $E$ is irreducible since $\rho(X'/Y')=1$.
Hence, the birational contraction $f \colon X \to X'$ is a composition of $E$-flips, followed by a divisorial contraction.
As $K_X+\Delta \sim_\mathbb{R} 0$, those $E$-flips are also $(K_X+\Delta)$-flops.
\\
Finally, the claim that $\Delta' \neq 0$ follows because $\Delta$ is relatively big over $Y'$, hence, $0\neq f_\ast \Delta =\Delta'$.
\end{proof}

\begin{lemma}
\label{flip.step}
Let $(X,\Delta)$ be a projective klt Calabi-Yau pair with $\Delta\neq 0$.  
Let $p \colon X\rightarrow Y$ be a $K_X$-Mori fibre space of $\mathbb{Q}$-factorial varieties with $K_X+\Delta \sim_{\mathbb{R}} p^\ast(K_Y+G)$, and $(Y, G)$ is a klt pair.
Let $q \colon Y \dashrightarrow Y'$ be a $(K_Y+G)$-flop. 
Then there exists a projective klt Calabi-Yau pair $(X',\Delta')$, with $\Delta'\neq 0$, a $K_{X'}$-Mori fibre space $p' : X'\rightarrow Y'$, and an isomorphism in codimension one $f\colon X\dashrightarrow X'$ such that the following diagram commutes
\begin{displaymath}
	\xymatrix{
		& X \ar@{-->}[r]^{f} \ar[d]_{p} &  X' \ar[d]^{p'}\\
		& Y \ar@{-->}[r]^{q}  &  Y'	
	}
\end{displaymath}
\end{lemma}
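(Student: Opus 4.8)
Lemma \ref{flip.step} (the flip step).

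The plan is to argue exactly as in Lemma \ref{contraction.step}, replacing the target of the divisorial contraction by the base of the flip. Write $g\colon Y\to Z$ and $g'\colon Y'\to Z$ for the flipping and the flipped small contractions, so that $q=(g')^{-1}\circ g$ and $\rho(Y/Z)=\rho(Y'/Z)=1$. First I would form the composite $\phi=g\circ p\colon X\to Z$. Since $p$ is a Mori fibre space and $g$ is an extremal contraction, $\dim\NS(X/Z)=2$, and since $\Delta$ is $p$-big it is also $\phi$-big; hence $X$ is a relative Mori dream space over $Z$ by \cite{BCHM}, and both $\nef(X/Z)$ and $\Eff^1(X/Z)$ are rational polyhedral cones in the plane $\NS(X/Z)$, on which I intend to run the two-ray game.

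Next I would describe the two extremal rays of $\NE(X/Z)$. One of them, $R_1$, is the class of the fibres of $p$; its contraction over $Z$ is $p$ itself, and the dual nef ray is $[p^\ast A]$ for $A$ ample on $Y$. The second ray $R_2$ maps under $p_\ast$ onto the flipping ray of $Y/Z$, so the curves spanning $R_2$ are not $p$-vertical and do not sweep out $X$ (their images lie in the codimension $\geq 2$ locus $\mathrm{Exc}(g)$); consequently the contraction $h\colon X\to W$ of $R_2$ is birational rather than of fibre type. The crucial point is that $h$ is in fact small: its exceptional locus is contained in $p^{-1}(\mathrm{Exc}(g))$, and since $g$ is small and $p$ is equidimensional over the codimension-one points of $Y$, this locus contains no divisor. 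Thus $[H]$, the nef ray dual to $R_2$, lies in the interior of $\mov(X/Z)$, and crossing this wall produces the flip $f\colon X\dashrightarrow X'$ of $h$, which is an isomorphism in codimension one. Setting $\Delta':=f_\ast\Delta$, the pair $(X',\Delta')$ is again klt with $K_{X'}+\Delta'\equiv 0$, because $f$ is an isomorphism in codimension one and $K_X+\Delta\equiv 0$.

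It remains to read off the Mori fibre space on $X'$ and to identify its base with $Y'$. On $X'$ the cone $\nef(X'/Z)$ is again two-dimensional; one of its rays is the strict transform of $[H]$, now defining the flipped small contraction $X'\to W$, while the complementary extremal ray contracts to a variety via a morphism $p'\colon X'\to Y'$ with $\rho(X'/Y')=1$ and $-K_{X'}\equiv\Delta'$ relatively ample, i.e.\ a Mori fibre space. Since $\rho(X'/Z)=2$, the induced map $Y'\to Z$ has relative Picard number one and is small and birational, agreeing with $g$ away from the flipping locus; as the only two such models of $Z$ are $Y$ and its flip, and $Y'\neq Y$, uniqueness of flips forces $Y'$ to be the flip of $Y$ and the induced birational map $Y\dashrightarrow Y'$ to coincide with $q$. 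Commutativity of the square is then built into the construction.

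The step I expect to be the main obstacle is the smallness of $h$: this is exactly what guarantees that $f$ is an isomorphism in codimension one, and indeed that a base $Y'$ of the flipped Mori fibre space exists at all, since a divisorial $h$ would force $Y'=Z$. It hinges on the smallness of the flipping contraction $g$ together with the absence of a $p$-vertical divisor lying over $\mathrm{Exc}(g)$, which I would control using the equidimensionality in codimension one of the $\mathbb{Q}$-factorial Mori fibre space $p$; matching the resulting flipped base with the prescribed flip $Y'$ is then a formal consequence of the uniqueness of flips.
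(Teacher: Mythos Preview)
Your approach via the two-ray game over $Z$ is the same framework the paper uses, and your argument for the smallness of $h$---no prime divisor on $X$ can map to codimension $\geq 2$ in $Y$, since by $\qq$-factoriality and $\rho(X/Y)=1$ any such divisor would be numerically, hence $\qq$-linearly, trivial over $Y$---is correct and is indeed the main geometric input.

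There is, however, a genuine gap where you did not expect one. You assert that after a \emph{single} flip the complementary extremal ray on $X'$ already gives a Mori fibre space $p'\colon X'\to Y'$, but you do not justify this. Note that here $\mov(X/Z)$ is the half-plane bounded by the line $p^\ast\NS(Y/Z)$ (both $p^\ast A_Y$ and $p^\ast K_Y$ lie in it, the latter because $K_{Y'}$ is ample on the flip), and in general this half-plane may decompose into more than two nef chambers. After one wall-crossing the new extremal ray could well give another small contraction rather than a fibration, and several flips may be needed before reaching the chamber whose outer ray lies on the boundary line. Your identification of the base with the flip ``by uniqueness of flips'' presupposes that $p'$ is already a fibration with $\dim Y'=\dim Y$, which you have not established. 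The lemma only requires $f$ to be an isomorphism in codimension one, so this is easily repaired by allowing a sequence of flips; but as written the argument is incomplete.

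The paper sidesteps this by working on the divisor side rather than chamber by chamber: the ray spanned by $p^\ast K_Y$ sits on the boundary of $\Eff(X/Z)$, and the Mori dream space structure yields in one stroke an isomorphism in codimension one $X\dashrightarrow X'$ on which this class becomes semi-ample over $Z$; the target of the resulting contraction is then $\Proj_Z\bigoplus_m H^0(Y,mK_Y)$, which is the flip $Y'$ by definition. This avoids having to count chambers or to argue separately that the terminal contraction is a fibration.
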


\begin{proof}
Let $\pi \colon Y\rightarrow Z$, $\pi' \colon Y'\rightarrow Z$ be the flopping contractions associated to $q$ and let $H$ be an effective relatively anti-ample Cartier divisor on $Y$ over $Z$.
We denote $\overline{H}:=p^\ast H$.
We want to produce a map $f$ which is an isomorphism in codimension one and that makes the following diagram commute: 
\begin{displaymath}
	\xymatrix{
		& X \ar[dl]_{p} \ar@{-->}[rr]^{f} & & X' \ar[dr]^{p'} & \\
		Y \ar[drr]^{\pi} \ar@{-->}[rrrr]^{q}& & & & Y' \ar[dll]_{\pi'}\\
	 	& & Z &	&	
	}
\end{displaymath}	
As for $0<\epsilon\ll 1$ $(X, \Delta+\epsilon \overline{H})$ is klt, $K_X+\Delta + \epsilon \overline{H}\sim_\mathbb{R} \epsilon \overline{H}$, and $\Delta$ is relatively big, we can run the $\overline{H}$-MMP with scaling of an ample divisor relatively over $Z$ and this must terminate with a $\mathbb{Q}$-factorial relatively minimal model 
\[
\xymatrix{
X \ar[dr]_{\pi \circ p} \ar@{-->}[rr]^f & & X' \ar[dl]^{s'}\\
& Z &
}
\]
\noindent
The strict transform $\overline{H}'$ of $\overline{H}$ on $X'$ is semiample since $\Delta'$ is big over $Z$, where $\Delta':=f_\ast \Delta$.
As 
\[
Y'={\rm Proj}_{\mathcal{O}_{Z}}(\oplus_{n \geq 0} \pi_\ast \mathcal{O}_Y(nH))= {\rm Proj}_{\mathcal{O}_{Z}}(\oplus_{n \geq 0} (\pi \circ p)_\ast \mathcal{O}_X(n\overline{H})),
\] 
\noindent
Then the relative Iitaka fibration of $\overline{H}'$ over $Z$ induces a morphism $p' \colon X' \to Y'$, since, for any $n \geq 0$, 
\[
\pi_\ast \mathcal{O}_Y(nH) = (\pi \circ p)_\ast \mathcal{O}_X(n\overline{H})) = s'_\ast \mathcal{O}_{X'}(n\overline{H}'),
\]
\noindent
as $f$ is constructed as the run of a $\overline{H}$-MMP and $f_\ast \overline{H}=\overline{H}'$.
\\
We have that $\rho(X'/Z) \leq 2$ since $f$ is a birational contraction and $q$ is an isomorphism in codimension one of $\mathbb{Q}$-factorial varieties.
Since also $\rho(Y'/Z)=1\leq \rho(X'/Y')$, then $\rho(X'/Z)=2$; 
moreover, as $X'$ is $\mathbb{Q}$-factorial then $X$, $X'$ are isomorphic in codimension one, again, as $f$ by construction is a birational contraction and $\rho(X'/Z)= \rho(X/Z)$.
\\
Finally the claim that $\Delta' \neq 0$ follows because $\Delta$ is relatively big over $Y'$, hence, $0<f_\ast \Delta =\Delta'$.
\end{proof}

We conclude this section with a remark on the differences between the property of being rational connected and the property of having product type.
\begin{remark}\label{rat.connected}
If $(X,\Delta)$ is a projective klt Calabi-Yau pair not of product-type then Theorem \ref{structure} proves that $X$ is rationally connected, as $X$ is decomposed in a tower of morphisms with rationally connected fibers.
On the other hand, if $(X,\Delta)$ is of product-type and $X$ is not rationally connected, then Theorem \ref{structure} provides the MRC fibration.
Let us point out that if $(X,\Delta)$ is of product-type it is not always the case that $X$ is not rationally connected: there are examples of rationally connected varieties $Y$ with klt singularities and $K_Y\equiv 0$, see \cite{MR3329200}.
\end{remark}

As we saw in the Introduction, log birational boundedness of Calabi-Yau pairs does not hold if we allow ourselves to consider also product-type pairs.
Nonetheless, it is possible that the boundedness can still be proven if we consider $n$-dimensional product-type pairs $(X, \Delta)$ with $X$ rationally connected and the coefficients of $\Delta$ vary in a fixed DCC set, see~\cite[Conjecture~1.3]{rccy3}.
A proof of this fact appears in~\cite[Theorem~1.4]{bds}.


\section{Boundedness of Mori fibre spaces}
\label{boundedness.sect}

In this section we prove the main technical result needed for Theorem \ref{main}. 
More precisely, we show that if we fix a bounded set of varieties $\D$, then the set of klt Calabi-Yau pairs endowed with a Mori fibre space structure mapping to one of the varieties in $\D$ is bounded as well. 
The strategy is to lift a very ample divisor with bounded volume from the base to the total space, suitably perturb it by means of the boundary of the klt Calabi--Yau pair and then use the results from Section \ref{boundedness.subsec} to show that boundedness must holds for the Calabi--Yau pairs.

\subsection{Boundedness of volume for Mori fibre spaces of generalized pairs}

The main aim of this subsection is to prove the Theorem~\ref{log.birat.bnd.gen.pairs.thm} which represents a first step towards proving the boundedness of Calabi-Yau pairs endowed with a Mori fibre space whose base belongs to a bounded family.
Here, we will work in the more comprehensive setting of generalized pairs.
For the definition and basic properties of generalized pairs, the interested reader can check~\cite[\S 4]{bir.zhang}.

We start by first recalling the following theorem of Birkar which 
is one of the fundamental tools in his proof of \ref{bab.thm}.

\begin{theorem}
\label{bnd.lct.thm}~\cite[Theorem~1.6]{bir16bab}
Let $n,r$ be positive integers and $\epsilon$ a positive real number. 
Then  there exists a positive real number $t= t(n,r,\epsilon)$ satisfying the following. 
	Assume that
	\begin{itemize}
	 \item $(F,B)$ is a projective $\epsilon$-lc pair of dimension $n$, 
	 \item $A$ is a very ample divisor on $F$ with $A^n\le r$,

	 \item $A-B$ is ample, and 

	 \item $M\ge 0$ is an $\rr$-Cartier $\rr$-divisor with $|A-M|_\rr\neq \emptyset$.
	\end{itemize}
	Then  	
	\[
	\lct(F,B,|M|_\rr)\ge \lct(F,B,|A|_\rr)\ge t.
	\]
\end{theorem}

Let us also recall the following immediate consequence of Birkar's proof of the BAB Conjecture, see \cite{bab16a, bir16bab}.

\begin{lemma}
\label{v.ample.index.lemma}
Fix a positive integer $n$ and a positive real number $\epsilon$.
Then, there exist positive integers $m_{n, \epsilon}=m(n, \epsilon)$, $s_{n, \epsilon}=s(n, \epsilon)$ such that for any $n$-dimensional $\epsilon$-lc Fano variety $X$, $-m_{n, \epsilon} K_X$ is a very ample Cartier divisor on $X$ of degree at most $s_{n, \epsilon}$.
\end{lemma}

In particular, $s_{n, \epsilon}$ also provides a bound for the volume of $-m_{n, \epsilon} K_X$. 
We can always assume that $m_{n, \epsilon} \geq 2$.

\begin{proof}
Theorem~\ref{bab.thm} and~\cite[Lemma 2.24]{bab16a} imply that there exists $t=t(n, \epsilon)$ such that $-tK_X$ is Cartier.
As $-K_X$ is ample, then~\cite[Theorem 1.1, Lemma 1.2]{K93} imply that there exists $l=l(n, \epsilon)$ such that $-ltK_X$ is very ample.
Hence, we can define $m_{n, \epsilon}:=ln$.
\\
The existence of $s_{n, \epsilon}$ follows from the previous part and the bound on the volume of $-K_X$ that was established in~\cite[Theorem~2.11]{bir16bab}.
\end{proof}

We are now ready to state the main result of this section.
We remind the reader that the definition of generalised pair can be found in~\S~\ref{sect.gen.pair}

\begin{theorem}
\label{log.birat.bnd.gen.pairs.thm}
Fix a positive real number $\epsilon$, a DCC set $I$,  and positive integers $n, p, d, l$. 
\\	
Let $\mathfrak{D}$ be the set of pairs $(X', \Delta'+\frac 1 l G)$ satisfying the following properties
\begin{enumerate}
	    \item there exists an $n$-dimensional projective generalized klt pair $(X \to X', \Delta', M)$,
	    \item the coefficients of $\Delta'$ belong to $I$, $pM$ is Cartier,
	    \item $K_{X'}+\Delta'+M' \equiv 0$, 
	    \item there exists $\Gamma \sim_\rr M$ such that 
	    $(X', \Delta'+\Gamma')$ is $\epsilon$-klt,
		\item there exists a contraction $f\colon X' \to Z$ on $X'$, with $\dim Z < \dim X'$, 
		\item $-K_{X'}$ is $f$-ample,
		\item there exists a very ample Cartier divisor $H$ such that $\Vol(Z,H)\leq d$, and 
	    \item $G \sim l f^\ast H$ and the coefficients of $\frac 1 l G$ are in $[0, 1]$.
	\end{enumerate}
Then $\mathfrak{D}$ forms a log birationally bounded family.
\end{theorem}

We postpone the proof of Theorem~\ref{log.birat.bnd.gen.pairs.thm} till the end of this subsection.

Theorem~\ref{log.birat.bnd.gen.pairs.thm} can be deduced from~\cite[Theorem~1.3]{bir.zhang} together with the following technical result which shows for suitable generalized pairs of Calabi--Yau type, endowed with a fibration, that the volume of the anticanonical class is bounded from above, thus, generalizing the result of Theorem~\ref{bound}.

\begin{proposition}
\label{hyperplane.generalized}
Fix a DCC set $I$, a positive real number $\epsilon$ and positive integers $n, p$. 
\\
If $(X \to X', \Delta', M)$ is a projective $n$-dimensional generalized klt pair such that
	\begin{enumerate}
\item the coefficients of $\Delta'$ are in $I$ and $pM$ is Cartier,
\item there exists a morphism $f\colon X' \to Z$, $0< \dim Z < \dim X'$, and $-K_{X'}$ is $f$-ample,
\item 
there exists a very ample Cartier divisor $H$ on $Z$ such that 
\[
K_{X'}+\Delta'+M' \sim_\mathbb{Q} f^\ast H, \ \textrm{and } \ K_Z+H \textrm{ is big},
\]
\item there exists $0 \leq \Gamma \sim_\rr M$ such that $(F, (\Delta'+\Gamma')\vert_F)$ is $\epsilon$-klt where $F$ is a general fibre of $f$ and $\Gamma'$ is the pushforward of $\Gamma$ on $X'$,

\end{enumerate}		
Then there exists a constant $k=k(I, \epsilon, p, n)$ such that
\[
\Vol(X', \Delta'+M') \leq k \Vol(Z, H).
\]
\end{proposition}

\begin{proof}
If $\Vol(X', \Delta'+M')=0$ there is nothing to prove.
Hence, we can assume that $\Delta'+M'$ is big and consequently that also its restriction to a general fibre of $f$ is big.
We can also assume that $X'$ is $\mathbb{Q}$-factorial by passing to a small $\mathbb{Q}$-factorial modification as that does not alter any of the hypotheses in the statement of the proposition.
\\
We will prove that there exists a constant $k'=k'(I, \epsilon, p, n, j)$ such that the conclusion of the proposition holds for $k'$ instead of $k$, where $j=\dim Z$.
The statement of the theorem then follows by taking 
\[
k(I, \epsilon, p, n):= \max_{0<j<n} k'(I, \epsilon, p, n, j).
\]
To define $k'$, we will work by induction on the set of ordered pairs of integers $(n, j), \; n>j>0$ with the lexicographic order.
\\
The pair  $(F, \Delta_F :=(\Delta' + \Gamma')|_F)$ is generalized $\epsilon$-klt and $K_F +  \Delta_F \sim _\rr 0$, by adjunction.
In particular, $F$ belongs to a bounded family, by Theorem~\ref{bab.thm}.
Let us also fix $\mfe=\mje=m(n-j, \epsilon) , \; \sfe=\sje=s(n-j, \epsilon)$ the two natural numbers defined in Lemma~\ref{v.ample.index.lemma}.
Apply Theorem~\ref{bnd.lct.thm} to the pair $(F, \Delta_F)$ with $A:=-m_{\dim F, \epsilon}K_F$, $r:=s_{\dim F, \epsilon}$, then we define $t=t(n-j, \sje, \epsilon)$.
In particular, for any effective divisor $G \in |-K_F|_{\mathbb{R}}$, $(F, \Delta_F+tG)$ is klt.
We define $\bar{t}:= \min\{\frac 1 2, \frac t 2\}$.

{\bf Claim}. ${\rm Vol}(X',\bar{t}(\Delta'+\Gamma')-2f^*H)=0$. 
\begin{proof}
In fact, if that is not the case, then $\bar{t}(\Delta'+\Gamma')-2f^*H$ is big.
Thus, there exists an effective $\mathbb{R}$-divisor $E\sim_{\mathbb{R}} \bar{t}(\Delta'+\Gamma')-2f^*H$. 
Let us define 
\[
\Theta:=(1-\bar{t})(\Delta'+\Gamma')+E,
\]
\noindent
so that $K_{X'}+\Theta\sim_{\mathbb{R}} -f^*H$.
Since $E\vert_F \sim_{\mathbb{R}} -\bar{t}K_F$ by construction, by the above discussion $(F, \Theta\vert_F)$ is klt; 
hence, $(X',\Theta)$ is klt over the generic point of $Z$, and, by Theorem~\ref{bundle}, there exists an effective divisor $\Gamma_Z$ on $Z$ such that 
\[
- f^*H \sim_{\mathbb{R}} K_{X'}+\Theta \sim_{\mathbb{R}} f^*(K_Z+\Gamma_Z).
\]
\noindent 
This immediately gives a contradiction since $K_Z+H$ is supposed to be big, while we have just proved that $K_Z+\Gamma_Z+H \sim_{\mathbb{R}} 0$.
\end{proof}
\noindent
We fix a general element $H' \in |H|$ and define $H'':=f^{-1}H'$, so that $H'' \sim f^\ast H$.
\\
Lemma \ref{jiang} implies that 
\[
{\rm Vol}(X',\bar{t}(\Delta'+\Gamma')) \leq 
{\rm Vol}(X',\bar{t}(\Delta'+\Gamma')-2f^\ast H)+2n{\rm Vol}(H'',\bar{t}(\Delta'+\Gamma')|_{H''}).
\]
Hence, as ${\rm Vol}(X',\bar{t}(\Delta'+\Gamma')-2f^\ast H)=0$ and $\Gamma' \sim M'$
\[
{\rm Vol}(X',\Delta'+M')\leq 2n \bar{t}^{-1} {\rm Vol}(H'',(\Delta'+M')|_{H''}).
\]
\noindent
If $j=1$, then $H''=\sum_{i=1}^hF_i$ where $h=\deg_Z(H)={\rm Vol}(Z, H)$ and $F_i$ is a general fibre of $f$ for each $i$.
Hence, $(F_i, (\Delta'+\Gamma')|_{F_i})$ is $\epsilon$-klt log Calabi--Yau pair of dimension $\dim X-1$. 
By Theorem \ref{bab.thm},
\[
{\rm Vol}(H'',(\Delta'+M')|_{H''})=\sum_{i=1}^h{\rm Vol}(F_i, -K_{F_i})\leq \frac{s_{n-1, \epsilon}}{m_{n-1, \epsilon}^{n-1}} {\rm Vol}(Z, H).
\]
Hence, we may take 
\[
k'(I,\epsilon, p, n, 1):=2n \bar{t}^{-1} \frac{s_{n-1, \epsilon}}{m_{n-1, \epsilon}^{n-1}},
\]
thus proving the starting step of the induction, as $j=1$.
\newline
When $j >1$, let us consider the map $g:=f|_{H''}$: $H''\rightarrow H'$. 
Then, by adjunction $H''$ supports a generalized klt pair $(H''' \to H'', \Delta'\vert_{H''}, M\vert_{H'''})$, such that 
\begin{itemize}
\item 
$H'''$ is a divisor in $X$, by possibly passing to a higher model of $X$,
\item 
$\dim H'=j-1 >0$
\item 
the coefficients of $\Delta'\vert_{H''}$ are in $I$ which is DCC, by adjunction and freeness of $|g^\ast H|$,
\item
$pM\vert_{H'''}$ is Cartier and $\Gamma\vert_{H'''}\sim_\mathbb{R} M\vert_{H'''}$, by restricting,
\item 
$(\Delta'+\Gamma')|_{H''}$ is $\epsilon$-klt along the general fiber of $g$, by adjunction and freeness of $|f^\ast H|$,
\item 
$K_{H''}= (K_{X'}+H'')\vert_{H''}$, by adjunction, since $H''$ is Cartier and general in a base point free linear system,
\item 
$-K_{H''}$ is $g$-ample since $H'' \sim_{g} 0$, and 
\item 
$K_{H''}+\Delta|_{H''}\sim_{\mathbb{R}}2g^*(H|_{H'})$, and $K_{H'} \sim (K_Z+H)|_{H'}$ is big.
\end{itemize}
Thus, by inductive hypothesis, since $\dim H'' =\dim X -1$,
\begin{eqnarray*}
{\rm Vol}(H'',(\Delta'+M')|_{H''}) & \leq k'(I,\epsilon, p, n-1, j-1){\rm Vol}(H',2H|_{H'})=\\
&=2^{j-1}k'(I, \epsilon, p, n-1, j-1){\rm Vol}(Z,H).
\end{eqnarray*}
\noindent
Taking \[
k'(I,\epsilon, p, n,j):=2^{j}n \bar{t}^{-1} k'(I,\epsilon, p, n-1, j-1),
\] 
the proof is complete.
\end{proof}

\begin{remark}
  From the point of view of the proof of Proposition \ref{hyperplane.generalized} there seems to be quite a few points where the strategy could be improved if we had a better knowledge and understanding of the features of generalized pairs.
\\
For example, it would be nice to show that it is possible to write down a canonical bundle formula for generalized pairs that would make it possible to drop hypothesis $(4)$ on the existence of an effective member of the linear system $|M|$ with bounded singularities.
 \\
For instance, for the purpose of the above proof, it would be enough to know that for a generalized klt pair $(X \to X', \Delta', M)$ and a morphisms $f \colon X' \to Z$ such that $K_{X'}+\Delta'+M' \sim_{f, \rr} 0$, there exists a klt pair $(Z, \Gamma)$ and a pseudo-effective divisor $P$ for which
\[
K_{X'}+\Delta'+M' \sim_{\rr} f^\ast(K_Z+ \Gamma+P).
\]
After the completion of this paper, it was shown in \cite{Fil18a, HL} that a canonical bundle formula exists for generalized pairs in full generality.
\end{remark}

In the course of the proof of Proposition~\ref{hyperplane.generalized} we used the following easy generalization of~\cite[Lemma 2.5]{Jia15}.

\begin{lemma}\label{jiang}
Let $X$ be a projective normal variety and let $D$ be a  $\mathbb{R}$-Cartier $\mathbb{R}$-divisor on $X$. 
Let $S$ be a base point free Cartier prime divisor on $X$.
Then for any rational number $q>0$
\[
\Vol(X, D+qS) \leq 
\Vol(X, D) + q\cdot \dim X \cdot \Vol(S, D|_S+qS|_S).
\]
\end{lemma}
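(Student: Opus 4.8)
The plan is to estimate $h^0$ of multiples of $D+qS$ by repeatedly peeling off copies of the base point free divisor $S$ and collecting the contributions on $S$ itself, and then to pass to the limit defining the volume. Write $n=\dim X$ and fix a positive integer $b$ with $q=a/b$, $a\in\mathbb{Z}_{>0}$. For $m$ divisible by $b$ the divisor $mqS$ is integral and Cartier, so that $\lfloor m(D+qS)\rfloor=\lfloor mD\rfloor+mqS$; set $D_m:=\lfloor mD\rfloor$. In this way everything reduces to bounding $h^0(X,D_m+mqS)$ in terms of $h^0(X,D_m)$ and sections living on $S$.

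For the key estimate, choose a section $\sigma\in H^0(X,\oo_X(S))$ cutting out $S$. Multiplication by $\sigma$ gives, for every integer $l\ge 1$, a left-exact sequence
\[
0\to \oo_X(D_m+(l-1)S)\xrightarrow{\ \cdot\sigma\ }\oo_X(D_m+lS)\to \mathcal{Q}_l\to 0,
\]
whose cokernel $\mathcal{Q}_l$ is supported on $S$ and agrees with $\oo_S\big((D_m+lS)|_S\big)$ away from the non-Cartier locus; taking global sections yields $h^0(X,D_m+lS)\le h^0(X,D_m+(l-1)S)+h^0\big(S,(D_m+lS)|_S\big)$, up to a correction from the reflexive hull that will be negligible for what follows. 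Iterating from $l=1$ to $l=mq$ gives
\[
h^0(X,D_m+mqS)\ \le\ h^0(X,D_m)+\sum_{l=1}^{mq}h^0\big(S,(D_m+lS)|_S\big).
\]
Here the hypothesis that $S$ is base point free is essential: since $S$ is prime and base point free it moves, so $S|_S$ is linearly equivalent to an effective divisor on $S$, whence $(D_m+mqS)|_S-(D_m+lS)|_S=(mq-l)\,S|_S$ is linearly equivalent to an effective divisor and $h^0\big(S,(D_m+lS)|_S\big)\le h^0\big(S,(D_m+mqS)|_S\big)$ for every $1\le l\le mq$. Therefore the sum is bounded by $mq\cdot h^0\big(S,(D_m+mqS)|_S\big)$, and
\[
h^0(X,D_m+mqS)\ \le\ h^0(X,D_m)+mq\cdot h^0\big(S,(D_m+mqS)|_S\big).
\]

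Finally I would divide by $m^{n}/n!$ and take $\limsup$ over $m$ divisible by $b$. The first term tends to $\Vol(X,D)$. For the second, $(D_m+mqS)|_S=\lfloor mD\rfloor|_S+mqS|_S$ differs from $m\,(D+qS)|_S$ by a divisor with bounded coefficients, so $h^0\big(S,(D_m+mqS)|_S\big)$ grows like $\tfrac{m^{n-1}}{(n-1)!}\Vol\big(S,(D+qS)|_S\big)$, and the prefactor $mq\cdot n!/m^{n}$ shifts the exponent and produces the constant $q\cdot n$. Combining the two estimates yields
\[
\Vol(X,D+qS)\ \le\ \Vol(X,D)+q\cdot n\cdot \Vol\big(S,D|_S+qS|_S\big),
\]
as claimed. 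The only genuine content beyond \cite{Jia15} is the passage to an $\rr$-divisor $D$, and the main obstacle is the bookkeeping of the floors $\lfloor mD\rfloor$ together with the reflexive-hull discrepancies on the possibly singular $S$; since the statement is asymptotic, however, all of these contribute only lower-order terms and do not affect the leading coefficients. The one step that truly relies on a geometric hypothesis is the monotonicity of $l\mapsto h^0\big(S,(D_m+lS)|_S\big)$, which rests squarely on the base point freeness of $S$.
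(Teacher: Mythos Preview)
Your argument reproduces Jiang's original $h^0$--counting proof for $\qq$-divisors and then tries to push it through directly for $\rr$-divisors by working with the round-downs $\lfloor mD\rfloor$. The paper does something much simpler: it cites \cite[Lemma~2.5]{Jia15} for the $\qq$-Cartier case verbatim and then passes to $\rr$-divisors by approximating $D$ in $\NS(X)_\rr$ by $\qq$-Cartier $\qq$-divisors and invoking continuity of the volume function (see \cite[Cor.~2.2.45]{Laz}). Since both sides of the inequality are continuous in the numerical class of $D$, the $\rr$-case follows immediately.

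Your route is in principle workable, but the places you wave your hands are exactly where the technical cost lives. On a non-$\qq$-factorial $X$ the round-down $D_m=\lfloor mD\rfloor$ is only a Weil divisor, so the cokernel $\mathcal{Q}_l$ need not be the line bundle $\oo_S((D_m+lS)|_S)$, and the restriction $(D_m)|_S$ is not even well defined as a divisor class; you then have to compare $h^0(S,\mathcal{Q}_l)$ with a genuine volume on $S$ computed from the $\rr$-Cartier class $(D+qS)|_S$, and likewise justify the monotonicity in $l$ at the sheaf level rather than via divisors on $S$. These issues can be handled, but they are not lower-order bookkeeping---they are the entire content of the $\rr$-extension. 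The continuity-of-volume argument sidesteps all of this in one line.
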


\begin{proof}
	When $D$ is a $\mathbb{Q}$-Cartier $\mathbb{Q}$-divisor on $X$, 
	then this is the original statement of~\cite[Lemma 2.5]{Jia15}. 
\\
	When $D$ instead is an $\mathbb{R}$-Cartier $\mathbb{R}$-divisor, 
	then it suffices to choose a sequence of $\mathbb{Q}$-Cartier 
	$\mathbb{Q}$-divisors approximating $D$ in the N\'eron-Severi 
	group and notice that by virtue of continuity of the volume 
	function the inequality in the statement of the Lemma is 
	preserved in the limit.
\end{proof}

When working with log pairs, rather than generalized ones -- i.e., when $M = 0$ -- the statement of Proposition \ref{hyperplane.generalized} can be immediately reduced to the following one.

\begin{proposition}
\label{hyperplane}
Fix a DCC set $I \subset [0, 1)$ and an integer $n$. 
If $(X, \Delta)$ is a pair such that
	\begin{enumerate}
		\item $X$ is a projective variety of dimension $n$,
		\item $(X, \Delta)$ is klt and the coefficients of $\Delta$ are in $I$,
		\item there exists a morphism $f\colon X \to Z$, $0< \dim Z< \dim X $, and $-K_X$ is $f$-ample, and
		\item there exists a very ample Cartier divisor $H$ on $Z$ such that $K_X+\Delta \sim_\mathbb{R} f^\ast H$ and $K_Z+H$ is big,
	\end{enumerate}		
then there exists a constant $k=k(I, n)$ such that
	\[
		\Vol(X, \Delta) \leq k \Vol(Z, H).
	\]
\end{proposition}

\begin{proof}
Hypotheses (1)-(3) of Proposition~\ref{hyperplane.generalized} are automatically satisfied.
Corollary~\ref{elc} then implies that there exists $0<\epsilon=\epsilon(\dim X -\dim Y, I)$ such that $(F, \Delta\vert_F)$ is $\epsilon$-klt and this implies that condition (4) in Proposition~\ref{hyperplane.generalized} is satisfied as well, by adjunction to a general fibre of $f$.
\end{proof}

\begin{proof}[Proof of Theorem \ref{log.birat.bnd.gen.pairs.thm}]
Given a pair $(X',\Delta' + \frac 1 l G) \in \mathfrak{D}$, let us consider the generalized pair
\[
(X \to X', \Delta', M + \overline{H}),
\]
where $\overline{H}$ is the pullback of $H$ to $X$.
Since $(X \to X', \Delta', M)$ is generalized klt, the same holds for $(X \to X', \Delta', M + \overline{H})$. 
Moreover, we can assume that $X$ is a log resolution of $(X', \Delta' + \Gamma)$ where $M$ descends.
We will denote by $\widetilde G$ (resp. $\widetilde{\Delta'}$) the strict transform of $G$ (resp. $\Delta'$) on $X$, and by $\overline{G}:= \supp(\widetilde G)$.
In particular, since $G \sim l f^\ast H$ and the coefficients of $\frac 1 l G$ are in $[0, 1]$,
\begin{align}
\label{g.ineq}
\overline{G} \leq l\overline{H}.
\end{align}
\noindent
Let us fix  $H_1 \in |H|$ sufficiently general.
Let us define $H':=f^{-1} H_1$ on $X'$ and let $H''$ be the strict transform of $H'$ on $X$.
Thus, $H'' \in |\overline{H}|$, for a sufficiently general element. 
\\
By adjunction, up to choosing a sufficiently high model $X \to X'$, there exists a generalized pair structure $(H'' \to H',\Delta'\vert_{H'}, (M+\overline{H})\vert_{H''})$ satisfying the following conditions:
\begin{itemize}
\item[(i)] the coefficients of $\Delta'\vert_{H'}$ are in the DCC set $I$, since $H'$ is a general element of a base point free linear system,
\item[(ii)] $p(M +l\overline{H})\vert_{H''}$ is Cartier, $\forall l \in \mathbb{N}$
\item[(iii)] $\Gamma\vert_{H''} \sim_{\mathbb{R}} M\vert_{H''}$,
\item[(iv)] $(H', (\Delta'+\Gamma')\vert_{H'})$ is $\epsilon$-klt by  Bertini's theorem for log pairs.
\item[(v)] the restriction of $f$ to $H'$ gives a morphism $f_{H'}\colon H' \to H_1$, $\dim H_1 < \dim H'$,
\item[(vi)] the trace of $\overline{H}\vert
_{H''}$ on $H'$ is just $f_{H'}^\ast (H\vert_{H_1})$, by construction, and
\item[(vii)] $\forall l \in \mathbb{N}$ and  for the very ample Cartier divisor $H\vert_{H_1}$ on $H_1$, $K_{H'}+(\Delta'+\Gamma'+ l\overline{H})\vert_{H'} \sim_\mathbb{Q} (1+l)f^\ast_{H'} (H\vert_{H_1})$.
\end{itemize}
As $f$ is a $K_{X'}$-Mori fibre space, then for $0< \lambda \ll1$, $K_{X'}+(1+\lambda)(\Delta'+M')+f^\ast H$ is ample.
Hence, the divisor
\[
 K_X+E +\lceil \widetilde{\Delta'} \rceil+ 2M + \overline{H}
 \]
 is big, where $E$ is the exceptional divisor of the birational morphism $X \to X'$.
By~\cite[Theorem 1.3]{bir.zhang}, there exists a positive integer $m=m(I, n, p)$ such that for any $m'$ divisible by $m$ the linear system $|m'(K_X+E +\lceil \widetilde{\Delta'} \rceil+ 2M + \overline{H})|$ defines a birational map.
Up to passing to a higher birational model of $X$, we can assume that on $X$ 
\begin{align}
\label{eqn:N}
|m(K_X+E +\lceil \widetilde{\Delta'} \rceil+ 2M + \overline{H})|= |N| + F,
\end{align}
with $|N|$ is a base point free linear system inducing a birational morphism and $F$ the effective fixed part of the linear system.
\\
As the coefficients of $\Delta'$ vary in the DCC set $I$, there exists a positive integer $c=c(I)$ such that the coefficients of the divisor $c\Delta'$ are $\geq 1$ for any $(X', \Delta')$ in $\mathfrak{D}$. 
Thus,
\begin{align}
\label{eqn:c}
\Vol(X', c\Delta'+ M'+ f^\ast H) & = c^n\Vol(X', \Delta'+ \frac{M'+ f^\ast H}{c}) \\
\nonumber
&\leq c^n\Vol(X', \Delta'+ M' + f^\ast H).
\end{align}
\noindent
Lemma \ref{jiang} implies that 
\begin{align}
\nonumber
&\Vol(X', \Delta'+ M' + f^\ast H) \leq \\
\label{vol.chain.ineq.gen}
& \Vol(X', \Delta'+M')+  n \cdot \Vol(H', (\Delta' +M')|_{H'}+f^* H|_{H'})=\\
\nonumber
& \Vol(X', \Delta'+M')+  n \cdot \Vol(H', (\Delta' +M')|_{H'}+f^*_{H'} (H|_{H_1})).
\end{align}
Hence, 
  \begin{align} 
\nonumber
&\Vol(X, K_X+E +\lceil \widetilde{\Delta'} \rceil+ 2M + \overline{H}) \\
\nonumber
\leq  & 
\Vol(X', K_{X'}+\lceil \widetilde{\Delta'} \rceil+ 2M' + H')
& \hfill[\text{using $f_{\ast}$}]
\\ 
\label{vol.ineq}
\leq  & \Vol(X', c\Delta'+ M'+H') 
& \hfill[K_{X'} + \Delta'+M' \equiv 0]  
\\ 
\nonumber 
\leq & c^n \Vol(X', \Delta'+M'+H')
& \hfill[\textrm{by } \eqref{eqn:c}]
\\
\nonumber 
\leq &  c^{n} [\Vol(X', \Delta'+M') +
\\
\nonumber
& n \cdot \Vol(H', (\Delta'+M')|_{H'}+ f^*_{H'} (H|_{H_1})],
	\end{align}
where the last inequality follows from~\eqref{vol.chain.ineq.gen}.\\

{\bf Claim 1}. 
{\it There exists a positive real number $C_1=C_1(n, \epsilon)$ such that }
\[\Vol(X', \Delta'+M') \leq C_1.\]
\begin{proof}
We can assume that $\Vol(X', \Delta'+M') >0$, otherwise there is nothing to prove.
We can also assume that $X'$ is $\mathbb{Q}$-factorial, as passing to a small $\mathbb{Q}$-factorialization does not change volumes.\\
Hence, for $0<\eta\ll 1$, $(X', (1+\eta)(\Delta' + \Gamma'))$ is klt and $K_{X'}+(1+\eta)(\Delta' + \Gamma') \sim_\mathbb{R} \eta(\Delta' + \Gamma')$ is big.
Thus, we can run the $(\eta(\Delta' + \Gamma'))$-MMP with scaling of an ample divisor which terminates with a birational model $\psi \colon X' \dashrightarrow X''$ such that $\Delta'' + \Gamma''$ is ample on $X''$ and $\Vol(X'', \Delta'' + \Gamma'') = \Vol(X', \Delta' + \Gamma')$, where $\Delta'' := \psi_\ast \Delta'$, $\Gamma'':=\psi_\ast \Gamma'$.
On the other hand,  as $K_{X'}+\Delta' + \Gamma' \sim_\mathbb{R} 0$ and $(X', \Delta' + \Gamma')$ is $\epsilon$-klt, then $X''$ is an $\epsilon$-klt Fano variety and $\Vol(X'', \Delta'' + \Gamma'') = \Vol(X'', -K_{X''})$.
Hence, it suffices to take $C_1$ to be the constant whose existence is implied by~\cite[Theorem 2.11]{bir16bab}.
\end{proof}

{\bf Claim 2}. {\it There exists a positive real number $C_2=C_2(I, \epsilon, p, n)$ such that }
\[\Vol(H', (\Delta'+M')\vert_{H'} + f^\ast H\vert_{H'})  \leq C_2.\]
\begin{proof}
We can assume that $\Vol(H', (\Delta'+M')\vert_{H'} + f^\ast H\vert_{H'}) >0$, otherwise there is nothing to prove.
We can assume that $H'$ is $\mathbb{Q}$-factorial, as passing to a small $\mathbb{Q}$-factorialization does not change volumes.
We distinguish 2 cases.\\
If $\dim H_1=0$, then $H'$ is the disjoint union of $s$ fibres of $f_{H'}$, $F_1, \dots, F_s$, where $s:=\Vol(Z, H)\leq d$. 
Furthermore, in this case $f^\ast_{H'} (H|_{H_1}) = 0$ so that  
\begin{align*}
& \Vol(H', (\Delta'+M')\vert_{H'} + f^\ast_{H'} (H|_{H_1})) =\Vol(H', (\Delta'+M')\vert_{H'})=\\ 
& \sum_{i=1}^s(\Delta'+M')^{\dim X'-\dim Z}  \cdot F_i = s (\Delta'+M')^{\dim F}  \cdot F,
\end{align*}
where $F$ is a general fibre of $f\vert_{H'}$ (equivalently, of $f$).
By hypotheses (3)-(6) in the statement of the theorem, $F$ is an $\epsilon$-klt Fano and $K_{F} \sim_\mathbb{R} (\Delta'+M')\vert_{F}$; hence, by~\cite[Theorem 2.11]{bir16bab} there exists a constant $\bar{C}=\bar{C}(\epsilon, \dim F)$ such that 
\begin{align*}
(\Delta'+M')\vert_{H'}^{\dim F}  \cdot F = \Vol(F, (\Delta'+M')\vert_{F}) = \Vol(F, -K_F) \leq \bar{C}.
\end{align*}
We define $C_{2, a}:=\max_{0<i<\dim X'} \bar{C}(\epsilon, i)$.
\\
If $\dim H_1> 0$, by properties (vi-vii) in the previous page
\begin{align*}
K_{H'}+(\Delta'+M')\vert_{H'} + l f_{H'}^\ast (H\vert_{H_1}) \sim_\mathbb{R}  f_{H'}^\ast ((1+ l)H\vert_{H_1}), \ \forall l \geq 0.
\end{align*}
Moreover, Kawamata-Viehweg vanishing and the fact that $H\vert_{H_1}$ is very ample imply that $K_{H_1}+(n+1)H\vert_{H_1}$ is big: in fact, for some $m \in \{1, \dots, n\}$, $h^0(H_1, K_{H_1}+mH\vert_{H_1}) \neq 0$ as $\chi(H_1, K_{H_1}+mH\vert_{H_1})= h^0(H_1, K_{H_1}+mH\vert_{H_1})$ by Kawamata-Viehweg vanishing.
Hence, by properties (i-vii) in the previous page, using Proposition \ref{hyperplane.generalized}, we conclude that there exists $k=k(I, \epsilon, p, n-1)$ such that 
\begin{align*}
&\Vol(H', (\Delta'+M')\vert_{H'}+ f_{H_1}^\ast (H\vert_{H_1}) )
\leq \Vol(H', (\Delta'+M')\vert_{H'}+ n f_{H'}^\ast (H\vert_{H'}) )
\\
&\leq k (n+1)^{n-1} \Vol(H_1, H\vert_{H_1}) \leq k (n+1)^{n-1} d,
\end{align*}
\noindent
as $\Vol(H_1, H\vert_{H_1}) = \Vol(Z, H) \leq d$ since $H_1 \in |H|$.
We define $C_{2, b}:=k (n+1)^{n-1} d$.
\\
To terminate the proof of the claim, it suffices to take $C_2:= \max\{C_{2, a}, C_{2, b}\}$.
\end{proof}
\noindent
By~\cite[Lemma 2.4.2(3)]{HMX13} to show that $\mathfrak{D}$ forms log birationally bounded, it suffices to show that the set of pairs $(X, \lceil \widetilde{\Delta'} + \widetilde{G} \rceil+ E)$ is log birationally bounded.
Since $|N|$ in~\eqref{eqn:N} is a base point free linear system that induces a birational morphism, by~\cite[Lemma 2.4.2(4)]{HMX13} it suffices to show that there exists positive real numbers $C_3=C_3(I, \epsilon, p, n)$, $C_4=C_4(I, \epsilon, p, n)$ such that 
\[
\Vol(X, N) \leq C_3 \textrm{ and } (\lceil \widetilde{\Delta'}\rceil+ E+\overline G) \cdot N^{n-1} \leq C_4.
\]
Claims 1-2 and \eqref{vol.ineq} imply that 
\begin{align}
\label{claim.ineq}
\Vol(X, K_X+E +\lceil \widetilde{\Delta'} \rceil+ 2M + \overline{H}) \leq c^n(C_1+C_2).
\end{align}
\noindent
By construction and~\eqref{claim.ineq}
\[
\Vol(X, N) \leq m^n\Vol(X, K_X+E +\lceil \widetilde{\Delta'} \rceil+ 2M + \overline{H}) \leq (cm)^n (C_1+C_2).
\]
\noindent
Thus, it suffices to take $C_3:=(cm)^n (C_1+C_2)$.\\
By \cite[Lemma 3.2]{HMX13}, denoting $N'=2(2n+1)N$
\begin{align}
\label{hmx13.ineq1}
&(\lceil \widetilde{\Delta'}\rceil+ E + \overline G) \cdot N^{n-1} \leq 
(\lceil \widetilde{\Delta'}\rceil+ E+\overline G) \cdot (N')^{n-1}\\
\nonumber 
 \leq &  2^n \Vol(X, K_X+\lceil \widetilde{\Delta'}\rceil+ E+\overline G+ N'). 
\end{align}
Since $|m(K_X+\lceil \widetilde{\Delta'}\rceil+ E+ 2M + \overline{H}))|  = |N| + F$ and $F \geq 0$, then using~\eqref{g.ineq}
\begin{align}
\label{hmx13.ineq2}
&\Vol(X, K_X+\lceil \widetilde{\Delta'}\rceil+ E+ \overline G+ N') 
\\
\nonumber
\leq &\Vol(X, K_X+\lceil \widetilde{\Delta'}\rceil+ E+ \overline G+ 2(2n+1) (K_X+\lceil \widetilde{\Delta'}\rceil+ E+ 2M + \overline{H}))\\
\nonumber
\leq &\Vol(X, K_X+\lceil \widetilde{\Delta'}\rceil+ E+ l\overline{H}+ 2(2n+1) (K_X+\lceil \widetilde{\Delta'}\rceil+ E+ 2M + \overline{H})).
\end{align}
Thus,~\eqref{hmx13.ineq1}-\eqref{hmx13.ineq2} imply that
\begin{align*}
&(\lceil \widetilde{\Delta'}\rceil+ E + \overline G) \cdot N^{n-1}
\\
\leq & 2^n \Vol(X, K_X+\lceil \widetilde{\Delta'}\rceil+ E+l\overline{H} + 2(2n+1) m(K_X+\lceil \widetilde{\Delta'}\rceil+ E+ 2M + \overline{H}))  \\
\leq & 2^n \Vol(X, (2(2n+1)m+l) (K_X+ \lceil\widetilde{\Delta'}\rceil+ E+ 2M + \overline{H})),
\end{align*}
where the last inequality follows from the fact that $M$ and $K_X+ \lceil\widetilde{\Delta'}\rceil+ E+ 2M$ are pseudoeffective.
By~\eqref{claim.ineq} it suffices to take $C_4:= 2^n(2(2n+1)m+l)^nc^n (C_1+C_2)$.
\end{proof}

\subsection{Boundedness for Mori fibre spaces of log Calabi--Yau type}

We can now use the results from the previous subsection to show that, in the case of klt Calabi--Yau pairs endowed with a Mori fibre space structure with bounded base and singularities, we can produce a very ample divisor with bounded volume on the total space using the one on the base. 

After the completion of this paper, a generalization of this result appeared in~\cite[Theorem~4.6]{rccy3} and later that was further improved in~\cite[Theorem~1.2]{birkar.lcy.fibr}.

\begin{theorem}\label{bound.MFS.thm}
	Fix a DCC set $I\subset [0,1]$ and positive integers $n, d$. 
	Then the set $\mathfrak{D}$ of log pairs $(X, \Delta)$
	satisfying 
	\begin{enumerate}
		\item $(X, \Delta )$ is a projective klt pair of dimension $n$,
		\item the coefficients of $\Delta$ belong to $I$,
		\item $K_X+\Delta \equiv 0$,
	  	\item there exists a contraction $f\colon X\to Z$ with $\dim Z < \dim X$, 
	  	\item $-K_X$ is $f$-ample, and
		\item there exists a very ample Cartier divisor $H$ on $Z$ with $\Vol(Z,H)\leq d$,
	\end{enumerate}
	forms a bounded family.
\end{theorem}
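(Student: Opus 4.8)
The plan is to produce on $X$ an ample divisor of bounded volume and then to reduce the statement to the boundedness criteria of \S\ref{boundedness.subsec}. First I would record two sources of boundedness coming from the fibration $f\colon X\to Y$. Since $f$ is a Mori fibre space and $K_X+\Delta\equiv 0$, the divisor $-K_X$ is $f$-ample, so the general fibre $F$ is a Fano variety and $\Delta|_F\equiv -K_F$ makes $(F,\Delta|_F)$ a klt Calabi--Yau pair. By Corollary \ref{elc} this pair is $\epsilon$-lc for a uniform $\epsilon=\epsilon(n,I)$, hence by Theorem \ref{bab.thm} the fibres $F$ vary in a bounded family; in particular there is a uniform bound $\Vol(F,-K_F)\le v_0$ and, after enlarging $m$, a uniform integer $m$ with $-mK_F$ very ample. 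On the other hand, the very ample divisor $H$ with $\Vol(Y,H)\le d$ embeds $Y$ into a projective space of bounded dimension and bounded degree, so $Y$ itself varies in a bounded family.

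With both base and fibres bounded, I would set $\mathcal{H}:=-mK_X+\ell\,f^\ast H$ for suitable uniform integers $m,\ell$. Relative very ampleness of $-mK_X$ over $Y$ (coming from boundedness of the fibres) together with the very ampleness of $H$ on the bounded base $Y$ guarantees that $\mathcal{H}$ is very ample on $X$ once $\ell$ is large enough, and the required $\ell$ can be chosen uniformly because the relevant relative regularity is controlled by the bounded families of fibres and of $(Y,H)$. Its volume is then bounded: expanding $\mathcal{H}^n$ and using the projection formula, only the intersection numbers $(-K_X)^i(f^\ast H)^{\,n-i}$ with $n-i\le\dim Y$ survive, and each of these is controlled by $v_0$ together with the $H$-degrees on the bounded base, so $\Vol(X,\mathcal{H})\le V$ for some $V=V(n,I,d)$; alternatively one may bound $\Vol(X,\mathcal{H})$ directly by repeated application of Lemma \ref{jiang}. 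I expect this step to be the main obstacle: one must secure a uniform multiple making $\mathcal{H}$ ample while simultaneously bounding its volume, that is, control the interplay between the relative positivity of $-K_X$ and the positivity inherited from the base.

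Finally, I would fix a rational $t>0$ smaller than the positive lower bound $\delta_0=\delta_0(I)$ for the nonzero elements of $I$ (which exists because $I$ satisfies DCC), choose a general member of the very ample linear system $|\mathcal{H}|$, and form the boundary $\Delta_t:=\Delta+t\mathcal{H}$. For $t$ small and the member general, $(X,\Delta_t)$ is klt with coefficients in the DCC set $I\cup\{t\}$, and $K_X+\Delta_t\equiv t\mathcal{H}$ is ample with $0<\Vol(K_X+\Delta_t)=t^n\Vol(X,\mathcal{H})\le t^nV$. By Theorem \ref{main.hmx} the family $\{(X,\Delta_t)\}$ is then log birationally bounded. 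Moreover its coefficients are bounded below by $t$, and since $(X,\Delta)$ is $\epsilon$-lc by Corollary \ref{elc}, the pair $(X,\Delta_t)$ has log discrepancy bounded below by a uniform positive constant for $t$ small. Hence Theorem \ref{hmx_1.6} applies and shows that $\{(X,\Delta_t)\}$ is bounded; as $\Delta=\Delta_t-t\mathcal{H}$ with $\mathcal{H}$ part of the bounded data, the original family $\{(X,\Delta)\}$ is bounded as well.
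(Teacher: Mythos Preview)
Your overall architecture---build an ample divisor on $X$ with bounded volume, then invoke Theorems \ref{main.hmx} and \ref{hmx_1.6}---matches the paper's. But the implementation has two genuine gaps, and the paper avoids both by a different route.

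First, the step you yourself flag as ``the main obstacle'' really is one. Boundedness of the fibres $F$ via Theorem \ref{bab.thm} gives a uniform $m$ with $-mK_F$ very ample on each fibre, but that does \emph{not} yield relative very ampleness of $-mK_X$ over $Y$, let alone a uniform $\ell$ making $-mK_X+\ell f^\ast H$ globally very ample: you would need uniform control on the sheaves $f_\ast\mathcal{O}_X(-mK_X)$ across all such fibrations, which is not available from the cited results. The paper sidesteps this entirely. Since $f$ is a Mori fibre space, $\rho(X/Y)=1$ and $\Delta\equiv -K_X$ is $f$-ample, so $\delta\Delta+f^\ast H$ is ample for \emph{every} $\delta>0$; no very ampleness is needed. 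One simply chooses $H'\sim_{\mathbb{Q}}f^\ast H$ general and works with the pair $(X,(1+\delta)\Delta+H')$.

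Second, your volume bound does not go through. In the expansion of $\mathcal{H}^n$ the term $(-K_X)^{\dim F}\cdot(f^\ast H)^{\dim Y}$ is indeed controlled by $v_0\cdot d$, but the mixed terms $(-K_X)^i\cdot(f^\ast H)^{n-i}$ with $i>\dim F$ are \emph{not} determined by fibre and base data alone: for instance $(-K_X)^{\dim F+1}\cdot(f^\ast H)^{\dim Y-1}$ computes $\Vol(\Delta|_{f^{-1}(C)})$ for a general curve $C\subset Y$, which is a priori unbounded. Your alternative via Lemma \ref{jiang} hits the same wall: after one application you must bound $\Vol(G,\Delta|_G+f^\ast H|_G)$ for $G\in|f^\ast H|$ general, but $(G,\Delta|_G)$ is no longer Calabi--Yau (its log canonical class is $\sim f^\ast H|_G$), so Theorem \ref{bound} does not apply. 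Filling this gap is exactly the content of Proposition \ref{hyperplane}, which the paper proves by an inductive argument that uses Lemma \ref{kltlemma} together with the canonical bundle formula to force the base to be uniruled whenever the volume is too large. That proposition is the real engine of the proof, and your outline contains no substitute for it.
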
	  

\begin{proof}
By Theorem~\ref{hmx_1.5.thm}, we can assume that $I$ is finite.
By Corollary~\ref{elc}, there exists $\epsilon(n, I) >0$ such that $(X, \Delta)$ is $\epsilon$-lc.
As $f^* H$ is base point free,  by Bertini's theorem for log pairs, cf.~\cite[Theorem~4.8]{Kol97}, there exists an effective prime divisor $0 \leq G\sim 2f^* H$ such that $(X,\Delta+\frac 1 2 G)$ is $\epsilon'$-lc, where $\epsilon'=\min\{\frac 1 2, \epsilon\}$; 
furthermore, the coefficients of $\Delta+\frac 1 2 G$ are contained in $I'=I \cup \{\frac 1 2\}$ which is still a DCC set.
For any pair $(X, \Delta) \in \mathfrak{D}$, we fix such a choice of $G=G(X, \Delta)$.
\\
We define the set of pairs $\mathfrak{D}'$ in the following way
\begin{align*}
\mathfrak{D'}:=\{ & (X, \Delta+\frac 1 2 G) \ \vert \ (X, \Delta) \in \mathfrak{D}, \ G \textrm{ is the divisor defined above}\}.
\end{align*}
The set $\mathfrak{D}'$ is log birationally bounded: 
in fact, pairs $(X, \Delta+\frac 1 2 G) \in \mathfrak{D}'$ satisfy the hypotheses of Theorem~\ref{log.birat.bnd.gen.pairs.thm}, since 
\begin{itemize}
\item 
$(X,\Delta)$ is trivially a generalized pair by imposing $M=0$,
\item 
it suffices to fix the integer $l$ in the statement of Theorem~\ref{log.birat.bnd.gen.pairs.thm} to be $2$,
\item 
$I$ is a DCC set, 
\item 
$-K_X$ is $f$-ample, and 
\item 
$(X, \Delta)$ is $\epsilon$-lc.
\end{itemize}	
\noindent
Since $-K_X$ is $f$-ample, for any $0<\eta \ll 1$,
\[
K_X + (1+\eta) \Delta+\frac 1 2 G \sim_\mathbb{R} \eta \Delta + \frac 1 2 G
\] 
is an ample divisor and the pair $(X, (1+\eta) \Delta+\frac 1 2 G)$ is $\frac{\epsilon'}{2}$-klt.
For any pair $(X, \Delta+ \frac 1 2 G) \in \mathfrak{D}'$, we fix such a choice of $\eta=\eta(X, \Delta, G)$.
\\
We define the set of pairs $\mathfrak{D}''$ in the following way
\begin{align*}
\mathfrak{D}'':=\{ & (X, (1+\eta)\Delta+\frac 1 2 G) \ \vert \ (X, \Delta+\frac 1 2 G) \in \mathfrak{D}', \\ 
& \eta \textrm{ is the positive real number defined above}\}.
\end{align*}
As $\mathfrak{D}'$ is log birationally bounded,  and the notion of log birational boundedness does not depend on the coefficients of the divisor of the pairs in $\mathfrak{D}'$, see Definition~\ref{def.bound}, then $\mathfrak{D}''$ is also log birationally bounded.\\

{\bf Claim}.
The set $\mathfrak{D}''$ satisfies the hypotheses of Theorem \ref{hmx_1.6}. 
\begin{proof}
As the coefficients of $\Delta+\frac 1 2 G$ are contained in a DCC set, there exists $\delta=\delta(I) >0$ such that the coefficients of $\Delta+\frac 1 2 G$ are $\geq \delta$; 
a fortiori, the same must hold for the coefficients of $(1+\eta)\Delta+\frac 1 2 G$.
By construction, $K_X+(1+\eta)\Delta+\frac 1 2 G$ is ample and by construction $(X, (1+\eta)\Delta+\frac 1 2 G)$ is $\frac{\epsilon'}{2}$-klt, for $\epsilon'=\epsilon'(n, I)$.
\end{proof}
Applying Theorem~\ref{hmx_1.6} to $\mathfrak{D}''$, then $\mathfrak{D}''$ is a bounded set of pairs. 
In particular $X$ belongs to a bounded family:
by the definition of boundedness of $\mathfrak{D}''$, see Definition~\ref{def.bound}, in particular, there exists a projective morphism $f \colon Z \to T$ such that $T$ is of finite type and any variety $X \in \mathfrak{D}''$ appears also as one of the fibers of $f$.
Hence, up to decomposing $T$ as a finite disjoint union of locally closed subsets, we can assume that $Z \subset \mathbb{P}^n \times T$, which implies, by generic flatness, that there exists a positive integer $d=d(\mathfrak{D}'')$ such that for any $X \in \mathfrak{D}''$ one can find a very ample Cartier divisor $H_X$ on $X$ satisfying $H_X^{\dim X} \leq d$.
Since the coefficients of $\Delta$ belong to the finite set $I$ and $\Delta \equiv -K_X$, then the set $\mathfrak{D}$ is a bounded set, again, by the existence of Chow varieties,~\cite[Theorem~3.12]{Kol96}, since the degree with respect to $H_X$ of each component of $\Delta$ is bounded.
\end{proof}


\section{Proof of the Theorems}
\label{sec.proofs}

In this section we prove the theorems stated in the Introduction.

\begin{proof}[Proof of \ref{notproduct}]
From Theorem \ref{structure}, after a birational contraction,  we obtain a tower of Mori fiber spaces
\begin{align*}
\xymatrix{
	X \ar@{-->}[r] &X'=Y_0 \ar[r]^{p_0} & Y_1 \ar[r]^{p_1} & Y_2 \ar[r]^{p_2} &\dots \ar[r]^{p_{k-1}}& Y_{k} \ar[r]^{p_{k}}& Y_{k+1}.
}
\end{align*}
Since $X$ is not of product type, then the same theorem implies that the variety $Y_{k+1}$ is a point and, thus, $Y_k$ must be Fano.
\end{proof}

\begin{proof}[Proof of \ref{bounded.bases}]
By Theorem~\ref{hmx_1.5.thm}, we can assume that the coefficients of $\Delta$ belong to a finite set $I_0 \subset I$.
\\
As the set of varieties $Z$ in the statement forms the bounded family $\mathfrak{F}$, 
there exists a projective morphism $f \colon \mathcal Z \to T$ such that $T$ is of finite type and any variety $Z \in \mathfrak F$ appears as one of the fibers of $f$.
Hence, up to decomposing $T$ as a finite disjoint union of locally closed subsets, we can assume that $\mathcal Z \subset \mathbb{P}^n \times T$, for some fixed $n=n(\mathfrak F)$; 
thus, by generic flatness, that there exists a positive integer $d=d(\mathfrak{F})$ such that for any $Z \in \mathfrak F$ there exists a very ample Cartier divisor $H_Z$ on $Z$ satisfying $H_Z^{\dim Z} \leq d$.
\newline
Then, the theorem follows from Theorems~\ref{bound.MFS.thm}.
\end{proof}

\begin{proof}[Proof of \ref{main}]
We prove the Theorem by induction on $n \leq 4$.
By Theorem~\ref{hmx_1.5.thm}, we can assume that the set $I$ is finite.\\
The case $n=1$ is immediate as $X$ can only be $\mathbb{P}^1$ and $I$ is finite.
\\
Let us assume that the Theorem holds for $n-1$.
Let $(X, \Delta)$ be a pair in $\mathfrak{D}$.
By Theorem \ref{structure}, $X$ is birational to a tower of Mori fibre spaces
\begin{equation}
\label{diag.pf.1_3}
\xymatrix{
X \ar@{-->}[r]^{r} &X'=Y_0 \ar[r]^{p_0} & Y_1 \ar[r]^{p_1} & Y_2 \ar[r]^{p_2} &\dots \ar[r]^{p_{k-1}}& Y_{k}.
}
\end{equation}
For $i>0$, if $\dim Y_i>0$, then by Theorem~\ref{bundle}, there exists an effective divisor $\Delta_i$ on $Y_i$ such that $(Y_i, \Delta_i)$ is a klt Calabi--Yau pair.
The divisor $\Delta_i \neq 0$, otherwise the lc-trivial contraction $p_{i-1} \circ \dots \circ p_1\circ p_0 \colon (X', \Delta') \to Y_i$ would be strongly generically isotrivial by Theorem~\ref{ambro}, cf. Remark~\ref{rmk.bir.isotr};
by Definition~\ref{defproduct}, the rational contraction $p_0 \circ r \colon X \to Y_1$ would contradict the assumption that $(X, \Delta)$ is not of product type.
\\
We denote by $\mathfrak{F}$ the set of pairs $(X',\Delta'=r_\ast \Delta)$ just constructed.
We first prove that the set $\mathfrak{F}$ forms a bounded family.
By Theorem~\ref{bounded.bases}, it suffices to show that the set $\mathfrak{F}_1$ given by varieties $Y_1$ appearing in~\eqref{diag.pf.1_3} forms a bounded family, since the morphism $p_0$ in~\eqref{diag.pf.1_3} is a $K_{X'}$-Mori fibre space.
To this end, we distinguish several cases depending on the dimension of $Y_1$.
\newline
If $Y_1$ is a point, then $Y_1$ is certainly bounded.
\\
If $\dim Y_1 =1$, then $Y_1 = \mathbb{P}^1$, since $\Delta_1 \neq 0$; thus, $Y_1$ is bounded.
\\
If $\dim Y_1 =2$, then by Theorem~\ref{birk.thm} there exists an effective divisor $\Delta_1 \neq 0$ on $Y_1$ and a positive real number $\epsilon=\epsilon(N, I)$ such that $(Y_1, \Delta_1)$ is an $\epsilon$-lc Calabi--Yau surface pair.
Then, boundedness of $Y_1$ then follows from Theorem~\ref{alexeev.thm}.
\\
If $\dim Y_1=3$, then by Theorem~\ref{semiample} and Lemma~\ref{acc.lemma}, we can assume that $\Delta_1$ is a divisor on $Y_1$ with coefficients in a DCC set $J=J(n, I)$.
Furthermore, by Theorem~\ref{hmx_1.5.thm}, we can assume that the set $J$ is finite.
If $Y_2$ is a point, then $Y_1$ is bounded by Corollary~\ref{elc} and Theorem~\ref{hmx_1.7.thm}.
If $\dim Y_2 =1$, then $Y_2 = \mathbb{P}^1$ as $\Delta_2 \neq 0$. Hence, $Y_2$ is bounded.
If $\dim Y_2=2$, by Theorem~\ref{birk.thm} we can assume that there exists a positive real number $\epsilon$ such that $(Y_2, \Delta_2)$ is an $\epsilon$-lc Calabi--Yau surface pair; as $\Delta_2 \neq 0$ boundedness of $Y_2$ follows from Theorem~\ref{alexeev.thm}.
In both cases $\dim Y_2=1, 2$, boundedness of $Y_1$ follows from Theorem~\ref{bounded.bases} as $Y_2$ belongs to a bounded family.
\\
As the set $\mathfrak{F}$ forms a bounded family,~\cite[Proposition 2.5]{HX14} implies that there exists a bounded family $(Z,D)\rightarrow T$ such that for any pair $(X',\Delta') \in \mathfrak{F}$ and for any set $\{E_1, \dots, E_k\}$ of exceptional divisors of discrepancy at most $0$ over $(X', \Delta')$, there exists $t\in T$ and a birational morphism $\mu_t : Z_t \rightarrow X'$ which extracts precisely $\{E_1, \dots, E_k\}$ and $K_{Z_t}+D_t=\mu_t^* (K_{X'} +\Delta')$. 
We define the set $\mathfrak{D}'$ to be the set of klt Calabi--Yau pairs $(Z_t, D_t)$ just constructed.
Given $(X, \Delta) \in \mathfrak{D}'$ and the corresponding pair $(X', \Delta') \in \mathfrak{F}$ constructed in~\eqref{diag.pf.1_3}, since $K_X +\Delta \sim_\rr 0$ and $K_{X'} +\Delta' \sim_\rr 0$, all the exceptional divisors $E_1, \dots, E_l$ for $r \colon X \dashrightarrow X'$ have discrepancy $\leq 0$ with respect to $(X', \Delta')$.
Hence, by the above construction, there exists $(Z_t, D_t) \in \mathfrak{D}'$ crepantly extracting the divisors $E_1, \dots, E_l$; 
hence, by construction, $(Z_t, D_t)$ is isomorphic in codimension one to $(X, \Delta)$, which proves the second part of the statement of the theorem.
As $\mathfrak{D}'$ forms a bounded family and pairs in $\mathfrak{D}$ are isomorphic in codimension one to pairs in $\mathfrak{D}'$, ~\cite[Lemma~2.4.2(3)]{HMX13} implies that $\mathfrak{D}$ is log birationally bounded. 
\end{proof}

\begin{proof}[Proof of \ref{elliptic}]
For the sake of presentation we divide this proof into steps.
\newline
For $2\leq n \leq 5$, we denote by $\mathfrak{E}_n$ the set of $n$-dimensional elliptic fibrations $f \colon Y \to X$ satisfying the hypotheses in the statement of the theorem.
We fix the value of $n$.
We denote by $S$ the Zariski closure of the rational section of $f$ whose existence is assumed in the statement of the theorem.\\

{\bf Step 0}. {\it In this step we show that there exists a finite set $I \subset [0, 1) \cap \mathbb{Q}$ which only depends on $n$ such that for any normal variety $X$ that is the base of an elliptic fibrations $f \colon Y \to X$ in $\mathfrak{E}_n$, there exists an effective divisor $\Delta$ on $X$ with coefficients in $I$ and $(X, \Delta)$ is a klt Calabi--Yau pair}.
\newline
Given an elliptic fibration $f \colon Y \to X \in \mathfrak{E}_n$, as $K_Y \sim_\mathbb{Q} 0$ and $Y$ is terminal, the canonical bundle formula, see Theorem \ref{bundle}, implies that 
\[
K_Y \sim_\mathbb{Q} f^\ast(K_X+B_X+M_X),
\]
where $B_X$ is the boundary part and $M_X$ is the moduli part.
As the generic fiber of $f$ has dimension and genus one, Theorem \ref{semiample} implies that there exists $m=m(n)$ for which the moduli part $|mM_{\hat{X}}|$ becomes base point free, on a suitable birational model $\hat{X} \to X$ of the base of the elliptic fibration.
By Lemma~\ref{acc.lemma}, the coefficients of $B_X$ in turn belong to a DCC set $I'=I'(n) \subset \mathbb{Q}$.
By choosing a sufficiently general member $\hat{M} \in |mM_{\hat{X}}|$, we can assume that the pair $(X, \Delta)$, $\Delta:= B_X+\frac{1}{m}M$ is a klt Calabi-Yau pair, where $M$ is the pushforward of $\hat{M}$ on $X$; 
moreover as the coefficients of $\Delta$ belong to the DCC set $I' \cup \{\frac 1 m\}$, Theorem \ref{hmx_1.5.thm} implies that those actually lie in a finite set $I \subset I' \cup \{\frac 1 m\}$.
As $X$ is not of product type, Lemma~\ref{lem:cy.not.prod.type} implies that $K_X \not \sim_\mathbb{Q} 0$, hence, $\Delta \neq 0$.
\\

\noindent
We denote by $\mathfrak{D}_n$ the set of $(n-1)$-dimensional klt Calabi--Yau pairs $(X, \Delta)$ that we have just constructed.\\

{\bf Step 1}. {\it In this step we prove that there exists a bounded set of pairs $\mathfrak{D}'_n$ such that any pair $(X, \Delta) \in \mathfrak{D}_n$ is isomorphic in codimension $1$ to a $\mathbb{Q}$-factorial klt Calabi--Yau pair $(X', \Delta') \in \mathfrak{D}'_n$.}
\newline
The hypothesis on the $\mathbb{Q}$-factoriality of the $X'$ will be used in Step 5.
\newline
Since for any Calabi--Yau pair $(X, \Delta) \in \mathfrak{D}_n$, $X$ is not of product type and the coefficients of $\Delta$ lie in a finite set $I$, by Step 0, Theorem \ref{main} implies that there exists a bounded set of pairs $\overline{\mathfrak{D}}_n$ such that any $(X, \Delta) \in \mathfrak{D}_n$ is isomorphic in codimension 1 to a klt Calabi--Yau pair $(\overline{X}, \overline{\Delta}) \in \overline{\mathfrak{D}}_n$.
Hence, by definition of boundedness there exists a positive integer $\overline{C} = \overline{C}(n, I)$ and a very ample Cartier divisor $\overline{H}$ on $\overline{X}$ such that $\overline{H}^{\dim \overline{X}} \leq \overline{C}$.
Given a pair $(X, \Delta) \in \mathfrak{D}_n$ we define a pair $(X', \Delta') \in \mathfrak{D}'_n$ as follows:
\begin{itemize}
\item[a)] if the variety $\overline{X}$ defined above is $\mathbb{Q}$-factorial, we take $(X', \Delta') := (\overline{X}, \overline{\Delta})$;
\item[b)] otherwise, we consider a small $\mathbb{Q}$-factorialization $\overline{\pi} \colon X' \to \overline{X}$ of $\overline{X}$ together with the strict transform $\Delta'$ of $\overline{\Delta}$ on $X'$.
\end{itemize}
By construction, we know that the varieties from a) belong to a bounded set.
To show boundedness for the pairs $(X', \Delta')$ in case b), let $H_1 \in |2\overline{\pi}^\ast\overline{H}|$ be a sufficiently general member of base point free linear system of $2\overline{\pi}^\ast\overline{H}$.
The pair $(X', \Delta' + \frac 1 2 H_1)$ is klt by Bertini's theorem and $K_{X'} +\Delta' + \frac 1 2 H_1$ is big and nef.
The volume of $K_{X'} +\Delta' + \frac 1 2 H_1$ coincides with the volume of $\overline{H}$, hence it is an integer that is at most $\overline{C}$.
Thus,~\cite[Theorem~6]{MST16} implies that the pair $(X', \Delta' + \frac 1 2 H_1)$ and thus also the pair $(X', \Delta')$ belong to a bounded family of pairs.
Hence, for any pair $(X', \Delta') \in \mathfrak{D}'_n$ there exists a positive integer $C' = C'(n, I)$ and a very ample Cartier divisor $H'$ on $X'$ such that $H'^{n-1} \leq C'$.\\

{\bf Step 2}. {\it In this step we show that for any elliptic fibration $f \colon Y \to X$ in $\mathfrak{E}_n$ there exists an elliptic fibration $f' \colon Y' \to X'$ satisfying the following properties:
\begin{enumerate}
\item there exists $\Delta'$ such that $(X', \Delta') \in \mathfrak{D}'_n$;
\item $Y'$ is projective, $\mathbb{Q}$-factorial, terminal, $K_{Y'} \sim_\mathbb{Q} 0$, and $Y'$ is isomorphic to $Y$ in codimension one.
\end{enumerate}
}
\noindent
Given $f \colon Y \to X$ in $\mathfrak{E}_n$, there exists a $\mathbb{Q}$-factorial klt Calabi--Yau pair $(X', \Delta')$ in $\mathfrak{D}'_n$ isomorphic in codimension $1$ to the pair $(X, \Delta)$ constructed in Step 1.
By Proposition~\ref{small.qfact.prop}, given a small $\mathbb{Q}$-factorialization $\widetilde{X} \to X$ of $X$, there exists a terminal $\mathbb{Q}$-factorial projective elliptic variety $\widetilde{f} \colon \widetilde{Y} \to \widetilde{X}$, and $\widetilde{Y}$ is isomorphic to $Y$ in codimension one.
As $\widetilde{X}$ and $X'$ are isomorphic in codimension $1$ and $\widetilde{X}$ is $\mathbb{Q}$-factorial by construction, the existence of a fibration $f' \colon Y' \to X'$ satisfying (2) is guaranteed by Proposition~\ref{bir.contr.cy.prop}.
In particular, $K_{Y'} \sim_{\mathbb{Q}} 0$.\\

\noindent
We denote by $\mathfrak{E}'_n$ the set of elliptic Calabi--Yau fibrations $f' \colon Y' \to X'$ that we constructed in Step 2.
Given any $f \colon Y \to X \in \mathfrak{E}_n$ and the corresponding $f' \colon Y' \to X' \in \mathfrak{E}'_n$, we will denote by $S'$ the strict transform of $S$ on $Y'$. 
The irreducible divisor $S'$ is still a rational section for the elliptic fibration $f' \colon Y' \to X'$ since $f'$ is isomorphic to $f$ over a dense open set of $X'$ by construction.\\

{\bf Step 3}. {\it In this step we show that for any elliptic fibration $f'\colon Y' \to X'$ in $\mathfrak{E}'_n$ we can run a $S'$-MMP over $X'$ with scaling of an ample divisor. 
We show that this MMP terminates with a relatively minimal model $f_m \colon Y^m \to X'$.
Moreover, we show that $Y^m$ is terminal, $\mathbb{Q}$-factorial and isomorphic to $Y'$ in codimension $1$.
}
\newline
Given an elliptic fibration $f'\colon Y' \to X'$ in $\mathfrak{E}'_n$, as $S'$ is a rational section for $f'$ and $Y'$ is $\mathbb{Q}$-factorial, $S'$ is relatively big over $X'$.
As $K_{Y'} \sim_\mathbb{Q} 0$, for any positive real $\gamma$, 
\begin{equation}
\label{1_1.eq1}
K_{Y'} + \gamma S' \sim_\mathbb{R} \gamma S'.
\end{equation}
Since for a general fibre $F$ of $f'$, $S'\cdot F=1$, then $K_{Y'} + \gamma S'$ is big over $X'$, and for any $0 < \gamma \ll 1$, $(Y', \gamma S')$ is klt.
Thus, we can run the $(K_{Y'} + \gamma S')$-MMP with scaling of an ample divisor over $X'$ and by~\eqref{1_1.eq1} this is also a run of the $S'$-MMP.
By~\cite[Corollary 1.4.2]{BCHM}, this run of the MMP terminates with a relatively minimal model $Y' \dashrightarrow Y^m$ since $S'$ is relatively big.
The strict transform $S_m$ of $S'$ on $Y^m$ is big and nef over $X'$.
As $Y^m$ is a relatively minimal model, it follows that it is $\mathbb{Q}$-factorial. 
\\
By \eqref{1_1.eq1}, any extremal ray that is contracted in the run of the MMP in Step 3 has negative intersection with the strict transform of $S'$.
As $S'$ is irreducible, then each step of the MMP must be a flip:
in fact, if that were not the case, then there would be some step of the MMP which is a divisorial contraction and the exceptional divisor would then be the strict transform of $S'$; 
this is not possible since $S'$ is big over $X'$ and it is irreducible.
Hence, $K_{Y^m}\sim
_\mathbb{Q} 0$, and $Y'$ and $Y^m$ are isomorphic in codimension one.
Since the relative dimension of $f' \colon Y' \to X'$ is one and the birational map $Y' \dashrightarrow Y^m$ is the outcome of the run of a relative MMP over $X'$, then the exceptional locus of $Y' \dashrightarrow Y^m$ is vertical over $X'$:
in particular, $f'$ and $f^m$ are isomorphic over a dense open set of $X'$, by construction. 
Hence, as $S'$ is a rational section for $f'$, then the strict transform $S^m$ of $S'$ on $Y^m$ is going to be a rational section for $f^m \colon Y^m \to X'$.
\\

We denote by $\mathfrak{E}^m_n$ the set of elliptic fibrations $f^m \colon Y^m \to X'$ that we have just constructed.
We define the set 
\[
\mathfrak{L}_n :=\{ Y_m \; | \; \exists \; f^m \colon Y^m \to X' \in \mathfrak{E}^m_n \}.
\] 
\\

{\bf Step 4}. {\it In this step we show that to prove the theorem it suffices to prove that $\mathfrak{L}_n$ is bounded}.
\newline
Given any elliptic fibration $f \colon Y \to X$ in $\mathfrak{E}_n$, we have constructed an elliptic fibration $f^m \colon Y^m \to X'$ in $\mathfrak{E}^m_n$ such that $Y^m$ is isomorphic to $Y$ in codimension $1$, by Step 2-3.
By construction, $Y^m$ is isomorphic to $Y'$ in codimension $1$ which is in turn isomorphic to $Y$ in codimension $1$.
If $\mathfrak{L}_n$ is bounded, then the set of the $Y$ in $\mathfrak{E}_n$ is bounded up to isomorphisms in codimension $1$, which is what we wish to prove.\\

\noindent
For the remainder of the proof, we fix an elliptic fibration $f^m \colon Y^m \to X'$ in $\mathfrak{E}^m_n$.
We remind the reader that $S_m$ denotes the rational section of $f^m$ constructed in Step 3 as the strict transform of the section $S$ of the corresponding fibration $f \colon Y \to X$.\\

{\bf Step 5}. {\it In this step we show that $(Y^m, S_m)$ is a plt pair}.
\newline 
By adjunction, it suffices to show that $(Y^m, S_m)$ is log canonical and $S_m$ is the only lc center.
Let us consider the normalization of $S_m$, $\nu \colon S_m^\nu \to S_m$. 
On $S_m^\nu$ there is a canonically defined effective divisor $\diff(0)$ such that $K_{S_m^\nu}+\diff(0)=\nu^\ast((K_{Y^m}+S_m)|_{S_m})$, see \cite[\S~4.1]{Kol13}. 
In particular, $K_{S_m^\nu}+\diff(0)$ is relatively big and nef over $X'$.
As $Y^m$ is terminal and $\mathbb{Q}$-factorial, Lemma \ref{section.lemma} implies that
\begin{equation}
\label{exc.diff.eqn}
(f^m \circ \nu)_\ast \diff(0)=0.
\end{equation}
Thus, $(f^m \circ \nu)_\ast (K_{\bar{S}_m^\nu} + \diff(0))=K_{X'}$.
As by Step 1 $X'$ is $\mathbb{Q}$-factorial, the negativity lemma~\cite[Lemma 3.39]{KM} implies that 
\[
K_{S_m^\nu}+\diff(0)=(f_m \circ \nu)^\ast(K_{X'}) -E,
\]
where $E$ is an effective divisor exceptional over $X'$.
Thus, the pair $(S_m^\nu, \diff(0))$ is klt, since $(X', 0)$ is. 
Hence, by inversion of adjunction \cite[Theorem~5.50]{KM} $S_m$ is the only lc center of the pair $(Y^m, S_m)$ which in turn implies that $S_m$ is normal, see \cite[Proposition 5.51]{KM}.\\
    
{\bf Step 6.} {\it In this step we fix a general member $G \in |f_m^\ast(2n+1) H'|$ and we show that $(Y^m, \frac 1 2 S_m+\frac 1 2 G)$ is $\frac 1 2$-klt and $K_{Y^m}+\frac 1 2 S_m+\frac 1 2 G$ is nef and big.
Here $H'$ is the very ample divisor of bounded volume on $X'$ that was constructed at the end of Step 1.}
\newline
Since $|f_m^\ast (2n+1)H'|$ is base point free, Bertini's theorem implies that $(Y^m, S_m+G)$ is log canonical.
On the other hand, $(Y^m, 0)$ is terminal, by Step 3.
As discrepancies of valuations are linear functions of the boundary divisor of a pair, it follows that $(Y^m, \frac 1 2 S_m+\frac 1 2 G)$ is $\frac 1 2$-klt.
As $K_{Y^m} \sim_\mathbb{Q} 0$ and $S_m$ is relatively big, it follows immediately that $K_{Y^m}+\frac 1 2 S_m+\frac 1 2 G$ is big.
As for any $t \in \mathbb{R}$, $K_{Y^m}+t S_m+t G \sim_\mathbb{R} tS_m+tG$, showing nefness of $K_{Y^m}+\frac 1 2 S_m+\frac 1 2 G$ is equivalent to showing nefness of $K_{Y^m}+S_m+G$.
Let us assume that $K_{Y^m}+S_m+G$ is not nef.
Then the cone theorem for log canonical pairs,~\cite[Theorem 1.4]{MR2819675}, implies the existence of a $(K_{Y^m}+S_m+G)$-negative extremal ray $R \subset \overline{\mathrm{NE}}(X)$ which is spanned by the class of a curve $C \subset Y^m$ such that $0 <-((K_{Y^m}+S_m+G) \cdot C) \leq 2n$.
As $G \sim  f_m^\ast (2n+1) H'$ and $H'$ is ample and Cartier, $G \cdot C$ is either $0$ or $\geq 2n+1$ and, consequently, $(K_{Y^m} + S_m) \cdot C < 0$; moreover, as $K_{Y^m} + S_m$ is nef over $X'$, it follows that $C$ is not contained in a fibre of $f^m$, thus, $G \cdot C \geq 2n+1$.
As the class of $C$ spans the extremal ray $R$, applying the cone theorem with the pair $(Y^m, S_m)$, there exists another curve $C'$ whose class spans $R$, $0< -(K_{Y^m} + S_m) \cdot C)  \leq 2n$, and $G \cdot C' \geq 2n+1$.
This leads to a contradiction, as then $(K_{Y^m} + S_m+G) \cdot C'> 0$ while the class of $C'$ was assumed to span a $(K_{Y^m} + S_m+G)$-negative extremal ray.\\
 
{\bf Step 7.} {\it In this step we show that there exists positive integers $C_i= C_i(n, I), i=1, 2$ such that $C_1 \leq (K_{Y^m}+\frac 1 2 S_m+\frac 1 2 G)^{n} \leq C_2$}.
\newline
As $K_{Y^m}+\frac 1 2 S_m+\frac 1 2 G$ is big, the existence of a positive lower bound only depending on $n$ and $I$ for $\vol(Y^m, K_{Y^m}+\frac 1 2 S_m+\frac 1 2 G)$ follows from Step 6 and \cite[Theorem 1.3]{HMX14}.
In Step 6 we have shown that $S_m+G$ is big and nef.
As $G$ is nef, it follows that for any $t \in [0, 1]$,  $G+tS_m$ is nef.
Hence, $(G+tS_m)^n \geq 0$ for any $t \in [0, 1]$ and since $G \sim f^\ast_m (2n+1) H'$ then $G^n =0$ and
\[
(S_m+G)^n=S_m \cdot (S_m+G)^{n-1} + G \cdot (S_m+G)^{n-1}.
\]
Adjunction formula for $S_m$ yields, cf. Step 5,
\[
(K_{Y^m} + S_m)\vert_{S_m} = S_m\vert_{S_m} = K_{S_m} + \diff(0) = (f_m \circ \nu)^\ast (K_{X'}) - E, \ E, \diff(0) \geq 0.
\]
By the canonical bundle formula, cf. Theorem \ref{bundle}, $-K_{X'}$ is effective, thus, $L:=-((f_m \circ \nu)^\ast (K_{X'}) - E)$ is effective, so that
\begin{align*}
& S_m \cdot (S_m+G)^{n-1} = (S_m\vert_{S_m}+G\vert_{S_m})^{n-1} = \vol(S_m, G\vert_{S_m} - L) \leq \vol(S_m, G\vert_{S_m}) \\
& = \vol(S_m, f_m\vert_{S_m}^\ast (2n+1)H') = \vol(X',(2n+1)H') \leq (2n+1)^{n-1} C',
\end{align*}
where $C'= C'(n, I)$ is the positive integer whose existence has been shown in Step 1, and for which $H'^{n-1} \leq C'$.\\
In order to bound $G \cdot (S_m+G)^{n-1}$ we prove the following claim.\\

{\bf Claim}. 
For any $0<k \leq n$,
\[ 
\ G^k\cdot (S_m+G)^{n-k}\leq (n-k)(2n+1)^{n-1} C'.
\]

\begin{proof}
We prove the claim by descending induction on $k$.\\
The claim holds for $k=n$, as $G^n=0$.
Hence we can assume that $k<n$ and that the claim holds for $k+1$.\\
Then, as in the previous paragraph, we have that
\begin{align*}
& G^k\cdot (S_m+G)^{n-k} = G^k\cdot (S_m+G) \cdot (S_m+G)^{n-k-1} = \\
& G^{k+1} \cdot (S_m+G)^{n-k-1} + S_m \cdot G^{k} \cdot (S_m+G)^{n-k-1} \\ 
& \leq (n-k-1)(2n+1)^{n-1} C' +  S_m \cdot G^{k} \cdot (S_m+G)^{n-k-1},
\end{align*}
where the final inequality follows from inductive hypothesis.
Hence, to prove the claim, it suffices to show that 
\[
S_m \cdot G^{k} \cdot (S_m+G)^{n-k-1} \leq (2n+1)^{n-1} C'.
\]
As above, 
\begin{align*}
& S_m \cdot G^{k} \cdot (S_m+G)^{n-k-1}  =
G\vert_{S_m}^{k} \cdot (G\vert_{S_m} - L)^{n-k-1} \leq G\vert_{S_m}^{n-1},
\end{align*}
where the last inequality follows from the fact that $G\vert_{S_m}$ is a semiample divisor and $L$ is effective.
Finally, as we have already seen above, $G\vert_{S_m}^{n-1} \leq (2n+1)^{n-1} C'$.
\end{proof}
\noindent
Thus, to conclude the proof of this step, it suffices to take $C_2 := n(2n+1)^{n-1} H'^{n-1}$.\\

{\bf Step 8.} {\it Conclusion of the proof}.
\newline
For any $Y^m \in \mathfrak{L}_n$, we have constructed a pair $(Y^m, \frac 1 2 S_m+\frac 1 2 G)$ which is $\frac 1 2$-klt. 
The set of coefficients of the boundaries of such pairs is the set $\{\frac 1 2\}$.
By Steps 6-7 and~\cite[Theorem 1.3]{Fil18} $K_{Y^m}+\frac 1 2 S_m+\frac 1 2 G$ is nef and big and its volume lies in a discrete set $J$ of $[C_1, C_2] \subset \mathbb{R}_{>0}$, where $C_i=C_i(n, I)$.
The set $J$ is discrete and limited in a closed interval, hence, finite.
Thus, the set of pairs $(Y^m, \frac 1 2 S_m+\frac 1 2 G)$ constructed in Step 6, where $Y^m \in \mathfrak{L}_n$, is bounded by \cite[Theorem 6]{MST16}.
In particular, the set $\mathfrak{L}_n$ itself is bounded and, by Step 4, this concludes the proof.
\end{proof}

\begin{lemma}\label{section.lemma}
Let $g \colon T \to B$ be a surjective morphism of normal quasi-projective varieties with $\dim T -\dim B=1$.
Assume that $(T, 0)$ is terminal and that there exists a rational section $s \colon B \dashrightarrow T$.
Let $S$ be the Zariski closure of $s(B)$ and let $\nu\colon S^\nu \to S$ be the normalization of $S$. 
Then, $\diff_{S^\nu}(0)$ is exceptional over $B$.
\end{lemma}

\begin{proof}
Since $s$ is a rational section, the finite part in the Stein factorization of $g$ is an isomorphism.
Hence we can assume that $g$ is a contraction.\\
Since $T$ is terminal, then it is smooth in codimension two.
In particular, $\omega_T(S)$ is locally free at each codimension one point of $S$.
Let $P \in S$ be a codimension one point such that $g\vert_S(P)=Q \in B$ and $Q$ is codimension one. 
As $T$ is smooth at $P$, it suffices to show that $S$ is normal at $P$, since then $\nu$ would be an isomorphism locally around $P$ and the coefficient of $\diff_{S^\nu}(0)$ at the codimension one point $\nu^{-1}(P)$ would be $0$.
\\
Considering the birational morphism $g\vert_S \circ \nu \colon S^\nu \to B$, the generic point $P'$ of the strict transform of the closure of $Q$ on $S^\nu$ is the unique codimension one point which is mapped to $Q$. 
Hence, $g\vert_S^{-1}(Q)=P$ and, furthermore, $\nu^{-1}(P)=P'$.
Since $B$ is normal quasi-projective, the map $s$ is well defined at $Q$, and $s(Q)=P$ since $g\vert_S\circ s$ is the identity around $Q$ and $P$ is the only point above $Q$.
Then there exists a lift $s_\nu \colon B \dashrightarrow S^\nu$ which is well defined at $Q$ and such that $\nu \circ s_\nu=s$ and $s_\nu(Q)=P'$ by construction.
But then, since $S^\nu$ is smooth at $P'$ and $g\vert_S \circ \nu \circ s_\nu$ is the identity of $B$ and it is a morphism around $P$, it follows that $s_\nu$ is an isomorphism locally around $P$ and the same holds for $s$, which implies that normality of $S$ at $P$.
\end{proof}


\section{Proof of the Corollaries}
\label{sec.cor.proof}

In this section we prove the corollaries stated in the Introduction.

\begin{proof}[Proof of \ref{bounded.lcy.mfs.thm}]
The canonical bundle formula and Theorem~\ref{birk.thm} imply that there exist a positive real number $\delta=\delta(n, I) >0$ an effective divisor $\Gamma_Z$ on $Z$ such that $(Z, \Gamma_Z)$ is a $\delta$-klt Calabi--Yau pair.
As $-K_Z$ is big, Theorem \ref{bab.thm} implies that $Z$ belongs to a bounded family.
Finally, Theorem \ref{bounded.bases} concludes the proof.
\end{proof}

\begin{proof}[Proof of \ref{picard}]
Let $\mathfrak{D}$ be the set of pairs $(X, \Delta)$ satisfying the hypotheses of the statement of the corollary.
By Theorem~\ref{main}, there exists a bounded set of pairs $\mathfrak{D}'$ such that for any pair $(X, \Delta) \in \mathfrak{D}$
there exists a pair $(X', \Delta') \in \mathfrak{D}'$ such that $(X, \Delta)$, $(X', \Delta')$ are isomorphic in codimension one, so that $\rho(X)=\rho(X')$, and for any $m \in \mathbb{N}$, $h^0(X, m(K_X+\Delta))= h^0(X, m(K_{X'}+\Delta'))$.
Moreover, by~\cite[Theorem 6]{MST16}, we can assume that the pairs in $\mathfrak{D}'$ are $\mathbb{Q}$-factorial, so that $\rho(X') \leq h^2(X', \mathbb{Q})$.
Hence, it suffices to show that the conclusion in the statement of the corollary holds for the pairs in $\mathfrak{D}''$.
Furthermore, since $(X, \Delta) \in \mathfrak{D}$ are not of product type, Definition~\ref{defproduct} readily implies that the same holds for $(X', \Delta')\in \mathfrak{D}'$, as these pairs are isomorphic in codimension one.
\\
By definition of boundedness, there exists a pair $(Z, D) \to T$ over a base $T$ of finite type such that for any pair $(X', \Delta') \in \mathfrak{D}''$, there exists $t \in T$ such that the pair $(Z_t, D_t)$ is isomorphic to $(X', \Delta')$.
By substituting $T$ with the Zariski closure of the set of points $t \in T$ for which $(Z_t, D_t)$ is isomorphic to a pair $(X', \Delta') \in \mathfrak{D}'$, we can then assume that the set of such $t$ is Zariski dense in $T$. 
\\
Up to decomposing $T$ into a finite union of locally closed subset, we may assume that there exists a log resolution $\psi\colon (Z', D') \to Z$ of $(Z, D)$, where $D'$ is the sum of the strict transform of $D$ and the exceptional divisor of $\psi$.
Furthermore, up to decomposing $T$ further into a finite union of locally closed subset, we may also assume that for any $t \in T$ $(Z'_t, D'\vert_{Z'_t})$ is a log resolution of $(Z_t, D\vert_{Z_t})$.
In particular, for any $t \in T$, for all $m > 0$
\begin{align*}
H^0(Z_t, \mathcal{O}_{Z_t}(m(K_{Z_t}+D\vert_{Z_t}))) = 
H^0(Z'_t, \mathcal{O}_{Z'_t}(m(K_{Z'_t}+D'\vert_{Z'_t}))).
\end{align*}
Then~\cite[Theorem 4.2]{MR3779687} implies that for any  connected component $\overline{T}$ of $T$ $h^0(Z_t, \mathcal{O}_{Z_t}(m(K_{Z_t}+D\vert_{Z_t})))$ is independent of $t \in \overline{T}$, for all $m >0$.
At this point, we discard those connected components of $T$ that do not points $t$ such that $(Z_t, D_t)$ is isomorphic to one of the pairs in $\mathfrak{D}'$.
By construction then, for a connected component $\overline{T}_i$ of $T$, there exists $\bar{t}_i \in \overline{T}_i$ and a positive integer $\bar{m}_i$ such that $\bar{m}_i(K_{Z_{\bar{t}_i}} + D\vert_{Z_{\bar{t}_i}}) \sim 0$;
thus, by construction, 
\begin{align}
\label{inv.plurig.cy.eqn}
h^0(Z_{\bar{t}_i}, \mathcal{O}_{Z_{\bar{t}_i}}(\bar{m}_i(K_{Z_{\bar{t}_i}} + D\vert_{Z_{\bar{t}_i}}))=1, \forall \bar{t}_i \in \overline{T}_i.
\end{align} 
Hence, it suffices to define $m_0$ to be the maximum of the positive integers $\bar{m}_i$ just defined; 
$m_0$ is well defined as $T$ has only finitely many connected components, being of finite type, by definition of boundedness.
\\
We turn now to showing that the Picard ranks of the fibers of $(Z, D) \to T$ are bounded.
Since for any $(X', \Delta') \in \mathfrak{D}'$, $\rho(X') \leq h^2(X', \mathbb{Q})$,
It suffices to show that there exists a positive integer $\rho$ such that $h^2(Z_t, \mathbb{Q}) \leq \rho$ for any $t \in T$.
By Verdier's generalization of Ehresmann's theorem \cite[Corollaire~5.1]{Ver}, up to decomposing $T$ into a disjoint union of finitely many locally closed subvarieties, we can assume that $Z \to T$ is a locally trivial topological fibration. 
Hence, on each connected component $\overline{T}_i$ of $T$, there exists $\bar{\rho}_i$ such that 
\[
h^2(Z_t, \mathbb{Q})=\bar{\rho}_i, \ \forall t \in \overline{T}_i.
\] 
Hence, it suffices to define $\rho$ to be the maximum of the positive integers $\bar{\rho}_i$ just defined.
\end{proof}

\begin{proof}[Proof of \ref{cy.fibration}]
Let $\pi \colon Y \rightarrow X$ be an elliptic fibration satisfying the hypotheses of the statement of the corollary.
Using the canonical bundle formula, cf. Theorem~\ref{bundle}, 
by Theorem~\ref{semiample} and Lemma~\ref{acc.lemma}, there exists a divisor $\Delta$ on $X$ such that $(X, \Delta)$ is a klt Calabi--Yau pair and the coefficients of $\Delta$ are in a DCC set $J=J(n, I)$; 
by Theorem~\ref{hmx_1.5.thm}, we can assume that the set $J$ is finite.
By Lemma~\ref{lem:cy.not.prod.type}, as $Y$ is not of product type, then $\Delta \neq 0$ and $(X, \Delta)$ is not of product type.
The result then follows from Theorem~\ref{main}.
\end{proof}

\begin{proof}[Proof of \ref{effective.nonvanishing}]
Then the result follows from Theorem~\ref{bundle}, Corollary~\ref{cy.fibration} and Corollary~\ref{picard}.
\end{proof}

\begin{proof}[Proof of \ref{max.var.cor}]
By Theorem~\ref{hmx_1.5.thm}, we can assume that the coefficients of $\Delta$ belong to a finite set $I_0 \subset I$.
\\
As $f$ has maximal variation, then the moduli part $M_Z$ induced by the canonical bundle formula, Theorem~\ref{bundle}, for $(X, \Delta)$ is big, see Remark~\ref{rmk.max.var}.
In particular, $-K_Z$ is big, because $K_Z+B_Z+M_Z \sim_{\mathbb{R}}0$ and $B_Z$ is effective.
\\
As $\Delta$ is big over $Z$, for $0< \eta \ll 1$ we can run the relative $(K_X+(1+\eta)\Delta)$-MMP and this terminates with an ample model $f' \colon X' \to Z$ on which the strict transform $\Delta'$ of $\Delta$ is ample over $Z$.
We first show that the set of pairs $(X', \Delta')$ forms a bounded family.
\\
As the coefficients of $\Delta'$ belong to $I_0$, then given a general fibre $F'$ of $f'$, the pair $(F', \Delta'\vert_{F'})$ belongs to a bounded family by Corollary~\ref{elc} and Theorem~\ref{bab.thm}.
Theorem~\ref{birk.thm} together with Remark~\ref{rmk.bab.sing.base} imply that there exists $\delta=\delta(n, I_0)$ and a big effective divisor $\Gamma_Z$ on $Z$ such that $(Z, \Gamma_Z)$ is a $\delta$-klt Calabi--Yau pair and $\Gamma_Z$ is big, since $-K_Z$ is big.
Thus, Theorem~\ref{bab.thm} implies that the set $\mathfrak{F}$ of varieties $Z$ forms a bounded family.
As $-K_{X'}$ is $f'$-ample and $\mathfrak{F}$ forms a bounded family, Theorem~\ref{bounded.bases} implies that the set $\mathfrak{D}''$ of pairs $(X', \Delta')$ forms a bounded family.
\\
As the set of pairs $(X', \Delta')$ forms the bounded family $\mathfrak{D}''$,~\cite[Proposition~2.5]{HX14} implies that there exists a bounded family $(Z,D)\rightarrow T$ such that for any pair $(X',\Delta')$ and for any set $\{E_1, \dots, E_k\}$ of exceptional divisors of discrepancy at most $0$ over $(X', \Delta')$, there exists $t\in T$ and a birational morphism $\mu_t : Z_t \rightarrow X'$ which extracts precisely $\{E_1, \dots, E_k\}$ and $K_{Z_t}+D_t=\mu_t^* (K_{X'} +\Delta')$. 
Since $K_X +\Delta \sim_\rr 0$ and $K_{X'} +\Delta' \sim_\rr 0$, all the exceptional divisors for $r \colon X \dashrightarrow X'$ have discrepancy $\leq 0$ with respect to $(X', \Delta')$.
In particular, $(X, \Delta)$ is birational in codimension one to a pair $(Z_s, D_s)$ for some closed point $s \in T$.
Defining $\mathfrak{D}'$ to be the set of pairs $(Z_s, D_s)$ just constructed terminates the proof.
\end{proof}

\begin{proof}[Proof \ref{hodge}]
By Theorem~\ref{elliptic} there exists a family $\mathcal{X}\rightarrow \mathcal{T}$ such that any elliptic Calabi-Yau manifold with a section is birational in codimension $1$ to a fiber $\mathcal{X}_s$ of the family. 
Let $\mathcal{Y}\rightarrow \mathcal{X}$ be a log resolution. 
Since the generic fiber of $\mathcal{Y}$ is smooth, there exists a Zariski open subset $U$ of $\mathcal{T}$ such that the $E(\mathcal{Y}_s)$ does not depend on $s\in U$, where $E$ is the Euler function of a smooth variety. 
By the definition of the stringy $E$-function~\cite{Bat99}, we have that $E(\mathcal{X}_s)$ is constant over $U$ as well. 
By Noetherian induction, it follows that we obtain only a finite number of stringy $E$-functions from the fibers of $\mathcal{X}\rightarrow \mathcal{T}$. 
\newline
Let $X$ be a smooth elliptic Calabi-Yau with a section. By Theorem \ref{elliptic}, $X$ is K-equivalent to $\mathcal{X}_s$ for some $s$, and~\cite[Theorem~2.7]{Vey01} implies that they have the same $E$-function. 
In particular, there are finitely many possibilities for the $E$-function of an elliptic Calabi-Yau with a section. 
The Hodge numbers are precisely the coefficients of those functions and thus, they are uniformly bounded. 
See~\cite{Vey01} for more details. 
\end{proof}



\end{document}